%%%%%%%%%%%%%%%%%%%%%%% file template.tex %%%%%%%%%%%%%%%%%%%%%%%%%
%
% This is a general template file for the LaTeX package SVJour3
% for Springer journals.          Springer Heidelberg 2010/09/16
%
% Copy it to a new file with a new name and use it as the basis
% for your article. Delete % signs as needed.
%
% This template includes a few options for different layouts and
% content for various journals. Please consult a previous issue of
% your journal as needed.
%
%%%%%%%%%%%%%%%%%%%%%%%%%%%%%%%%%%%%%%%%%%%%%%%%%%%%%%%%%%%%%%%%%%%
%
% First comes an example EPS file -- just ignore it and
% proceed on the \documentclass line
% your LaTeX will extract the file if required
%\begin{filecontents*}{example.eps}
%%!PS-Adobe-3.0 EPSF-3.0
%%%BoundingBox: 19 19 221 221
%%%CreationDate: Mon Sep 29 1997
%%%Creator: programmed by hand (JK)
%%%EndComments
%gsave
%newpath
%  20 20 moveto
%  20 220 lineto
%  220 220 lineto
%  220 20 lineto
%closepath
%2 setlinewidth
%gsave
%  .4 setgray fill
%grestore
%stroke
%grestore
%\end{filecontents*}
%
\RequirePackage{fix-cm}
\documentclass[smallextended]{svjour3}       % onecolumn (second format)
\smartqed  % flush right qed marks, e.g. at end of proof
\usepackage{graphicx}
%
%\usepackage{mathptmx}      % use Times fonts if available on your TeX system
%
% insert here the call for the packages your document requires
%\usepackage{latexsym}
% etc.
%
% please place your own definitions here and don't use \def but
% \newcommand{}{}
%
% Insert the name of "your journal" with
% \journalname{myjournal}
%
\usepackage{amsfonts}
\usepackage{graphicx}
\usepackage{epstopdf}
\usepackage{algorithmic}
\usepackage{subfigure}
\usepackage{color}

\usepackage{amsopn}

\usepackage{bm}
\usepackage{amsmath}
%prevents figures ever floating backwards up the current page
\usepackage{placeins}

\newcommand{\dom}{dom}
\newcommand{\rank}{rank}

\newcommand{\img}{img}
% Used for creating new theorem and remark environments

\spnewtheorem{assumption}{Assumption}{\bf}{\it}
\spnewtheorem{algorithm}{Algorithm}{\bf}{\it}

\newcommand{\T}{\mathsf{T}}

\begin{document}

\title{A Convergent ADMM Algorithm for Grain Boundary Energy Minimization}
%\title{Energy minimization based on continuum model using iterative methods%\thanks{Grants or other notes
%about the article that should go on the front page should be
%placed here. General acknowledgments should be placed at the end of the article.}
%}
%\subtitle{Do you have a subtitle?\\ If so, write it here}

%\titlerunning{Short form of title}        % if too long for running head

\author{Yue Wu, Luchan Zhang, Yang Xiang }

%\authorrunning{Short form of author list} % if too long for running head

\institute{Y. Wu \at
          Department of Mathematics,\\
          The Hong Kong University of Science and Technology, Clear Water Bay, Kowloon, Hong Kong, China.  \\
          \email{ywudb@connect.ust.hk}
          \and L. C. Zhang \at School of Mathematical Sciences, Shenzhen University, Shenzhen, 518060,  China. \\
          Corresponding author.
          \email{zhanglc@szu.edu.cn}
          \and Y. Xiang \at Department of Mathematics, Hong Kong University of Science and Technology, Clear Water Bay, Kowloon, Hong Kong, China.
          \at HKUST Shenzhen-Hong Kong Collaborative Innovation Research Institute,  Futian, Shenzhen, China.\\
          Corresponding author.
          \email{maxiang@ust.hk}
                 %  \\
          %             \emph{Present address:} of F. Author  %  if needed
}
\date{Received: date / Accepted: date}
% The correct dates will be entered by the editor

\maketitle

\begin{abstract}
%Insert your abstract here. Include keywords, PACS and mathematical subject classification numbers as needed.

In this paper, we study a constrained minimization problem that arises from materials science to determine the dislocation (line defect) structure of grain boundaries. The problem aims to minimize the energy
of the grain boundary with dislocation structure subject to the constraint of Frank’s formula.
In this constrained minimization problem, the objective function, i.e., the grain boundary energy, is
nonconvex and separable, and the constraints are linear.
To solve this constrained minimization problem, we modify the alternating direction method of multipliers (ADMM) with an increasing penalty parameter.
We provide a
convergence analysis of the modified ADMM in this nonconvex minimization
problem, with  settings not considered by the existing ADMM convergence studies. Specifically, in the linear constraints, the coefficient matrix of each subvariable block is of full column rank. This property makes each subvariable
minimization strongly convex if the penalty parameter is large enough, and
contributes to the convergence of ADMM without any convex assumption on
the entire objective function. We prove that the limit of the sequence from the
modified ADMM is primal feasible and is the stationary point of the augmented
Lagrangian function. Furthermore, we obtain sufficient conditions to show
that the objective function is quasi-convex and thus it has a unique minimum
over the given domain. Numerical examples are presented to validate the convergence of the algorithm, and
results of the penalty method, the augmented Lagrangian method, and the modified ADMM are compared.

\keywords{Constrained minimization \and nonconvex objective function \and  ADMM \and grain boundary energy \and dislocations}
% \PACS{PACS code1 \and PACS code2 \and more}
\subclass{90C26 \and 90C90 \and 65K05 \and 	74A50}
%90C26  	Nonconvex programming, global optimization
%90C90  	Applications of mathematical programming
%65K05  	Numerical mathematical programming methods
%74Pxx		Optimization problems in solid mechanics
\end{abstract}

\section{Introduction}
%The introduction introduces the context and summarizes the manuscript. It is important to clearly state the contributions of this piece of work.
Grain boundary energy and grain boundary motion in crystalline materials, which strongly depend on the microstructure of grain boundaries, play important roles in the materials properties \cite{table1,table2,ReadShockley1950,Frank1950,Cahn2004,Olmsted2009,Lim2009,Wu2012,Zhu2014,Dai2014,Bulatov2014,ZHANG,Zhang2018,Voigt2018,Liu2019gb,Qin-static2021,Qin2021}.
For low angle grain boundaries, their energetic and dynamic properties depend  on the structure of dislocations (line defects) which satisfies the Frank's formula \cite{table1,table2,ReadShockley1950,Frank1950}.
However, the Frank's formula is not able to uniquely determine the dislocation structure of a low angle grain boundary.
In Ref.~\cite{ZHANG}, a continuum model was proposed to compute the structure and energy of a low angle grain boundary given the boundary plane orientation.
The idea of this continuum model is to find the grain boundary structure that  minimizes the grain boundary energy subject to the constraint of the Frank’s formula.
This constrained minimization problem was solved by the penalty method in Ref.~\cite{ZHANG}. The continuum model has been generalized to dislocation structure and energy of curved grain boundaries and solved using augmented Lagrangian method \cite{Qin-static2021}, and to interfaces between different materials that have lattice mismatch \cite{Zhang-cms}. This continuum model for energy and dislocation structure of grain boundaries and interfaces and the associated results have been successfully applied to different materials systems \cite{Srolovitz2018,Wan-Acta2023,Du2023,Furstoss2024,Lv2024,Zhang2024}.
In such a constrained minimization problem, the objective energy function  is nonconvex,
which makes analysis of existence and uniqueness of the solution challenging. Moreover, convergence of the algorithms based on the penalty method or augmented Lagrangian method has not been examined.

%In this paper,
%%we smooth the objective function in the constrained minimization problem with minor modification so that it is differentiable.
%%We consider the equivalent unconstrained minimization problem over the feasible region.
%we propose sufficient conditions based on quasi-convexity to show the stationary point of the objective energy function is unique,
%% in a bounded closed domain.
%%%%%%%%%%%%%%%%%%%%%%%%%%%%%%%
%which leads to the uniqueness of the solution of the minimization problem.
%%is unique since for a smooth function, the minimum point must be a stationary point.
%
%The uniqueness of the minimum can provide a theory guarantee for the accurate approximation when using numerical methods such as the penalty method and the augmented Lagrangian method (ALM) \cite{Bertsekas}, \cite{Edwin}.
%%%%%%%ADMM

In this paper, 
we propose an efficient numerical method to solve this constrained energy minimization problem based on modification of the alternating direction method of multipliers (ADMM) algorithm \cite{Distributed}.
Here the objective function is nonconvex over a high dimensional space, which makes it challenging to solve the constraint minimization problem efficiently.
The multi-block separable form of the objective function enables the adaptation of the ADMM algorithm \cite{Distributed},
which is an efficient method to solve such minimization problems that decomposes a one-step minimization in a high-dimensional space  into multiple minimization steps with each in a lower-dimensional subspace.
We modify the standard ADMM algorithm by using
  an increasing penalty parameter, and prove the convergence of the proposed algorithm for solving our constrained energy minimization problem.

The convergence of ADMM for this constrained minimization problem has several challenges.
Firstly, it has been pointed out that direct extension of ADMM to multi-block problems may not necessarily converge \cite{Caihua}, although convergence of two-block ADMM has been proved \cite{Distributed}.
Secondly, many analyses on the convergence of the multi-block ADMM are under certain convexity assumptions on the objective function \cite{Caihua,Yuan,chen2025convergence}.
Thirdly, although there are results on the convergence of ADMM for some special nonconvex models  \cite{Hong,Li,Yin,hien2024multiblock,yashtini2021multi},
these specific forms do not apply to this constrained minimization problem being considered for grain boundary dislocation structure.

We give a proof of the convergence of the modified ADMM algorithm for this constrained minimization problem for grain boundary dislocation structure,
and the limit is a stationary point of this constrained minimization problem.
We identify a property of the constraints that helps the convergence proof of ADMM.
Specifically, in our problem, the constraints are linear and the coefficient matrix associated with the constraints consists of $J$ Burgers vectors for the dislocations which are not zero vectors.
As a result,  even though the entire coefficient matrix is not of full rank,
the coefficient matrix $A_j \in \mathbb{R}^{6\times2}$ of each variable block $ \bm{u_j}, j=1,\dots,J$, has full column rank and each matrix $\frac{1}{b} A_j$ is semi-orthogonal, which leads to $ \| A_{j} (y - x) \|^2 = b^2 \| y - x\|^2, \forall x,y \in \mathbb{R}^2$.
This property makes the minimization of each subvariable block strongly convex. Our proof also provides a new type of nonconvex problems for which ADMM converges.

Moreover,
we employ sufficient conditions of quasi-convexity to prove that the stationary point of the objective function is unique in a bounded closed domain.
%%%%%%%%%%%%%%%%%%%%%%%%%%%%%%
 Under this uniqueness result, the solution of the minimization problem is unique,
and ADMM converges to the optimum of this constrained minimization problem.
The uniqueness of the minimum also guarantees that a
minimum obtained by numerical methods such as the penalty method and the augmented Lagrangian method (ALM)
is the solution of the minimization problem.

%The uniqueness of the minimum also  guarantees that a
%stationary point obtained by gradient-based numerical methods, such as the penalty method and the augmented Lagrangian method (ALM), is the solution of the minimization problem.

% The outline is not required, but we show an example here.
This paper is organized as follows. In Section~\ref{sec:problem}, we present the constrained minimization problem in Ref.~\cite{ZHANG} for the dislocation structure of grain boundaries.
In Section~\ref{sec:ADMM algorithm}, we introduce the modified ADMM algorithm with an increasing penalty parameter and analyze the convergence of this modified ADMM algorithm; we  also compare the properties of the constrained minimization problem for grain boundary dislocation structure with assumptions in  the available convergence proofs for the ADMM algorithm for minimization problems with nonconvex objective functions.
In Section~\ref{sec:unique}, we introduce sufficient conditions for quasi-convexity functions and employ them to show that
the stationary point of the objective function in our minimization problem is unique in a bounded closed domain;
we also show the minimum of the constrained minimization problem over a bounded and closed domain is unique, in three Burgers vectors case.
 In Section~\ref{sec: numerical}, we present simulation results of using our modified ADMM algorithm to solve the constrained minimization problem, and the results are compared with those using the penalty method and the ALM.
Conclusions are presented in Section~\ref{sec:conclusions}.

\section{Constrained minimization problem}
\label{sec:problem}
The constrained minimization problem for dislocation structure of a low angle grain boundary proposed in Ref.~\cite{ZHANG} is:
\begin{equation} \label{eq:of ori}
	\min \gamma_{g b}
\end{equation}
where
\begin{equation}\label{eq:gamma-gb}
\gamma_{g b}= \sum_{j=1}^J \frac{\mu (b^{(j)})^2}{4\pi (1-\nu)}
	\left[1-\nu \frac{(\nabla \eta_j \times \bm{n} \cdot \bm{b}^{(j)})^2 }{(b^{(j)})^2 \|\nabla \eta_j \|^2}\right]
	\| \nabla \eta_j \| \log \frac{1}{r_g \sqrt{\| \nabla \eta_j\|^2 +\epsilon  }},
\end{equation}
subject to
\begin{equation} \label{eq:frank}
	\theta(\bm{V} \times \bm{a} ) - \sum_{j=1}^J \bm{b}^{(j)} (\nabla \eta_j \cdot \bm{V}) = \bm{0}.
\end{equation}

Here the objective function $\gamma_{gb}$ represents the energy density of this planar low angle grain boundary, which depends on the dislocation structure on the grain boundary. On the grain boundary, there are dislocations with
 $J $ different Burgers vectors $\bm{b}^{(j)}= (b_{j1},b_{j2}, b_{j3})^\T, j=1,2,\dots,J $, with length $b^{(j)}$. Here the superscript $\T$ represents transpose. The arrangement of the dislocation array with Burgers vector $\bm{b}^{(j)}$ is represented by the dislocation density potential function $\eta_j$ \cite{Xiang1,Zhu2014,ZHANG} defined on the grain boundary plane, whose
 gradient $ \nabla \eta_j$ gives the normal direction of the dislocations in the grain boundary and $1/\| \nabla \eta_j\|$ gives the inter-dislocation distance of this dislocation array.
  In the objective function Eq.~\eqref{eq:gamma-gb}, $\bm{n}$ is the unit normal vector of the grain boundary plane,
$\mu$ and $\nu$ are elastic constants,  $r_g$ is a dislocation core parameter, and
$\epsilon $ is a small positive regularization constant to avoid the numerical singularity when $\| \nabla \eta_j \| =0$.

%The constituent dislocation of the same Burgers vector can be given by the integer-valued contour lines of $\eta_j.$
%The dislocation structure can be described in terms of $\nabla \eta_j.$
%

The linear constraints are based on the fact that the dislocation structure of an equilibrium planar low angle grain boundary should satisfy the Frank's formula.
In the constraint Eq.~\eqref{eq:frank},
$ \theta$ is a constant parameter, representing the misorientation angle of the grain boundary,
$\bm{V} $ is any vector in the grain boundary,
and $\bm{a}=(a_1,a_2,a_3)^\T$ is the unit vector along the rotation axis of the grain boundary.

In this section, we consider the constrained minimization problem in a simplified setting where
the Burgers vectors $\bm{b}^{(j)}, j = 1,2,\dots, J$ are  of the same length $b$, where $b={b }^{(j)}=\sqrt{ b_{j1}^2+b_{j2}^2+b_{j3}^2}$. This assumption holds for dislocations in fcc (face-centered cubic) crystals \cite{table2}.
We use $1/b$ as the unit of $\nabla \eta_j$.
Using $\frac{\mu b}{4\pi (1-\nu)}$ as the unit of  grain boundary energy density, 
the objective function in the dimensionless form becomes
\begin{equation} \label{eq:of}
	 \gamma_{g b}= \sum_{j=1}^J
	\left[1-\nu \frac{(\nabla \eta_j \times \bm{n} \cdot \bm{b}^{(j)})^2 }{ b^2 (\|\nabla \eta_j \|^2+\epsilon)}\right]
	b\sqrt{\| \nabla \eta_j \|^2 +\epsilon  }    \log \frac{1}{r_g \sqrt{\| \nabla \eta_j\|^2 +\epsilon  }}.
\end{equation}
We denote
$$ \bm{u}_j=\nabla \eta_j =(u_{jx} ,u_{jy})^\T \in \mathbb{R}^2,j=1,\dots,J, \quad
\bm{u} = (\bm{u}_1, \dots, \bm{u}_J)^\T \in \mathbb{R}^{2J},  $$
and define functions
$f_j: \mathbb{R}^2 \longrightarrow \mathbb{R},$
\begin{equation} \label{eq:f}
	f_j(\bm{u}_j) = 	\left[1-\nu \frac{(\bm{u}_j \times \bm{n} \cdot \bm{b}^{(j)})^2 }{(b^{(j)})^2 (\|\bm{u}_j \|^2+\epsilon)}\right]
	b\sqrt{\|\bm{u}_j \|^2 +\epsilon  }    \log \frac{1}{r_g \sqrt{\|\bm{u}_j\|^2 +\epsilon  }} .
\end{equation}
Note that each component function $f_j(\bm{u}_j)$ is nonconvex; see Figure~\ref{fig:fsc} in Sec.~\ref{sec:4.2} for an example,
%separable
and the objective function
\begin{equation}
    \gamma_{gb}(\bm{u}) = \sum_{j =1}^{J} f_j( \bm{u}_j)
\end{equation}
is separable.
The linear constraint Eq.~\ref{eq:frank} holds if and only if it holds for two basis vectors in the $xy$ plane: $ \bm{V}=\bm{V}_1=(1,0,0)^\T$ and $\bm{V}=\bm{V}_2=(0,1,0)^\T.$
Hence we can rewrite the constraints as six equations:
\begin{equation}\label{eq:h}
	\begin{aligned}
		&-\sum_{j=1}^J b_{j1} u_{jx} = 0, &
		-\sum_{j=1}^J b_{j2} u_{jx} - \theta a_3=0, \\
		&-\sum_{j=1}^J b_{j3} u_{jx} + \theta a_2=0, &
		-\sum_{j=1}^J b_{j1} u_{jy} + \theta a_3=0, \\
		&-\sum_{j=1}^J b_{j2} u_{jy} = 0, &
		-\sum_{j=1}^J b_{j3} u_{jy} - \theta a_1= 0.
	\end{aligned}
\end{equation}
Denote the coefficient matrix of each variable block $ \bm{u}_j$ in the linear system Eq.~\eqref{eq:h} by
\begin{equation}\label{eq:Aj}
	A_j =
	\left[
	\begin{array}{cc}
		\bm{b}^{(j)} & \bm{0} \\
		\bm{0}&  \bm{b}^{(j)}
	\end{array}
	\right]
	=\left[\begin{array}{cc}
		b_{j1} & 0 \\
		b_{j2} & 0 \\
		b_{j3} & 0 \\
		0 & b_{j1} \\
		0 & b_{j2} \\
		0 & b_{j3}
	\end{array}\right]  \in \mathbb{R}^{6 \times 2},
\end{equation}
and
\begin{equation}
\bm{c} =(0 , -\theta a_3, \theta a_2,\theta a_3,0, -\theta a_1 )^\T \in \mathbb{R}^{6 }.
\end{equation}

Then the optimization problem becomes:
\begin{equation}\label{eq:cp}
	\begin{aligned}
		\min  &  \sum_{j =1}^{J} f_j( \bm{u}_j)  \\
		\text{s.t. }&  \sum_{j =1}^{J} A_j \bm{u}_j =\bm{ c }.
	\end{aligned}
\end{equation}

Note that the analysis in this paper focuses on the planar grain boundaries, for which each $\bm{u}_j$ is a constant vector.

\FloatBarrier
\section{ADMM algorithm and convergence analysis}
\label{sec:ADMM algorithm}
The objective function of the constrained minimization problem in Eq.~\eqref{eq:cp} is separable.
It is natural to consider  using ADMM \cite{Distributed} to solve it.
After modifying the ADMM algorithm with an increasing penalty parameter, we are able to
 prove the convergence of this modified ADMM algorithm for the constrained minimization problem for grain boundary dislocation structure.

\subsection{ADMM algorithm}
Similar to the ALM, ADMM converts the constrained minimization problem in Eq.~\eqref{eq:cp} into an unconstrained minimization problem, with the augmented Lagrangian function as the new objective function defined by
\begin{equation}\label{eq:Lp}
	L_{\rho }(\bm{u}_1,\dots,\bm{u}_J, w):
	=\sum_{j=1}^J f_j(\bm{u}_j) + w^{\T} (\sum_{j=1}^J A_j \bm{u}_j -\bm{ c }) + \frac{\rho }{2} \| \sum_{j=1}^J A_j \bm{u}_j -\bm{ c }\|^2,
\end{equation}
where $w \in \mathbb{R}^6 $ is the Lagrangian multiplier, $\rho >0 $ is the penalty parameter.

Compared to the penalty method and ALM,
ADMM simplifies the minimization process by
 splitting the variables into blocks and
converting nonconvex minimization in a high dimensional space into the minimization of subvariables sequentially in a lower dimensional space. The objective functions of these lower dimensional minimization are strictly convex and thus solvable.  Numerically, in the minimization of each block, we use gradient descent until certain stopping criteria are met.
That is, in each iteration, we obtain
\begin{equation}
 \bm{u}_j^{(k+1)} := \arg\min_{\bm{u}_j} L_{\rho^{(k)}}(\bm{u}_{i<j}^{(k+1)},\bm{u}_j,\bm{u}_{i>j}^{(k)},w^{(k)}),
\end{equation}
by updating $\bm{u}_j$ using gradient descent
\begin{equation}\label{eqn:gradient-descent}
    \bm{u}_j^{i+1} :=
    \bm{u}_j^{i} - \alpha
    \nabla_{\bm{u}_j} L_{\rho^{(k)}}(\bm{u}_{i<j}^{(k+1)},\bm{u}_j^i,\bm{u}_{i>j}^{(k)},w^{(k)}),
\end{equation}
where $\bm{u}_j^{i+1} $ is the value after the $i$-th step of gradient descent,
with initialization $\bm{u}_j^0= \bm{u}_j^{(k)}$ and learning step $\alpha >0$.
The Lagrangian multiplier $w$ is updated by the iteration scheme
\begin{equation}
 w^{(k+1)} := w^{(k)} + \rho^{(k)} (\sum_{j=1}^J A_j \bm{u}_j^{(k+1)}-\bm{ c }).
 \end{equation}

We modify the ADMM algorithm using an increasing penalty parameter, i.e.,
the penalty parameter $\rho$ is multiplied by a factor $\beta>1$ in each iteration:
\begin{equation}
\rho ^{(k+1)} := \beta \rho^{(k)}.
\end{equation}
This modified ADMM algorithm is summarized in Algorithm~\ref{alg:ADMM}.
%%%%%%%%%%%%%

\begin{algorithm}[Modified ADMM algorithm]
         %\caption{Modified ADMM algorithm}
	\label{alg:ADMM}
        %\hrule
	\begin{algorithmic}
        \hrule
		\REQUIRE Initialize $\bm{u}^{(0)}=(\bm{u}_1^{(0)}, \bm{u}_2^{(0)}, \dots, \bm{u}_J^{(0)} ) $ and  $
			w^{(0)}$,  proper $\rho^{(0)} >0, \beta>1, k=0 $.
		\WHILE{ stopping criteria not satisfied}
		\STATE  $\bm{u}_1^{(k+1)} := \arg\min_{\bm{u}_1} L_{\rho^{(k)}}(\bm{u}_1,\bm{u}_2^{(k)}, \dots,\bm{u}_J^{(k)},w^{(k)})$
                  using  Eq.~\eqref{eqn:gradient-descent}, \\ \ \ \ \ \ and same for other $\arg\min_{\bm{u}_j}$ problems.
		\STATE $\bm{u}_j^{(k+1)} := \arg\min_{\bm{u}_j} L_{\rho^{(k)}}(\bm{u}_{i<j}^{(k+1)},\bm{u}_j,\bm{u}_{i>j}^{(k)},w^{(k)}),  \quad j=2,\dots,J-1$,
		\STATE $\bm{u}_J^{(k+1)} := \arg\min_{\bm{u}_J} L_{\rho^{(k)}}(\bm{u}_1^{(k+1)},\dots,\bm{u}_{J-1}^{(k+1)}, \bm{u}_J,w^{(k)}), $
		\STATE $w^{(k+1)} := w^{(k)} + \rho^{(k)} (\sum_{j=1}^J A_j \bm{u}_j^{(k+1)}-\bm{ c })$,
		\STATE $\rho ^{(k+1)} := \beta \rho^{(k)}$,
		\STATE $k:=k+1$,
		\ENDWHILE
		\RETURN $\bm{u}=(\bm{u}_1^{(k)}, \bm{u}_2^{(k)} \dots, \bm{u}_J^{(k)})$.
	\end{algorithmic}
        \hrule
\end{algorithm}

\subsection{Convergence analysis: Main results}
To prove the convergence of  Algorithm~\ref{alg:ADMM} for this constrained minimization problem Eq.~\eqref{eq:cp},
we need the boundness assumption on the Lagrangian multiplier,
which was adopted in many convergence proofs of the ALM  \cite{Bertsekas} and ADMM  \cite{wang2018convergence,M2016,KeGuo,liu2023bregman} algorithms. 

% \begin{assumption}\label{assump}
% Assume the Lagrangian multiplier $w^{(k)}$ in Algorithm~\ref{alg:ADMM} for the constrained minimization problem in Eq.~\eqref{eq:cp} is bounded, i.e.,
% there exists a positive constant $ M>0 $ such that
% $\| w^{(k)} \|  \leq M $ for all $k>0$.
% \end{assumption}

%We have the following theorem.
\begin{theorem}\label{theorem}  
Assume the Lagrangian multiplier $w^{(k)}$ in Algorithm~\ref{alg:ADMM} for the constrained minimization problem in Eq.~\eqref{eq:cp} is bounded, i.e.,
there exists a positive constant $ M>0 $ such that
$\| w^{(k)} \|  \leq M $ for all $k>0$, then we have

(I) Algorithm~\ref{alg:ADMM} converges when solving the constrained minimization problem Eq.~\eqref{eq:cp}.
That is, the ADMM sequence $\bm{u}^{(k)} =(\bm{u}_1^{(k)}, \bm{u}_2^{(k)},
\dots, \bm{u}_J^{(k)} )$ converges to some point $\bm{u}^{\star} $.
%\paragraph{(II)} \label{th:fm}

(II) The limit $\bm{u}^{\star} $ is feasible
and $ \left\{ w^{(k)} \right\} $ converges. Denote the limit as $w^{\star}$.
Then $(\bm{u}^{\star}, w^{\star}) $ is a stationary point of  the augmented Lagrangian function in Eq.~\eqref{eq:Lp}.
\end{theorem}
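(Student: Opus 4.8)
The plan is to treat the augmented Lagrangian evaluated along the iterates as an approximate Lyapunov function, exploiting the semi-orthogonality $\| A_{j} (y - x) \|^2 = b^2 \| y - x\|^2$, the geometric growth $\rho^{(k)} = \beta^k \rho^{(0)}$, and the boundedness of $w^{(k)}$ from Assumption~\ref{assump}. First I would record the smoothness of each $f_j$: on the bounded region where the iterates live, $\nabla^2 f_j$ is bounded, so its smallest eigenvalue is bounded below by some $-L_f$. The $\bm{u}_j$-subproblem has Hessian $\nabla^2 f_j(\bm{u}_j) + \rho^{(k)} A_j^\T A_j = \nabla^2 f_j(\bm{u}_j) + \rho^{(k)} b^2 I$, whose smallest eigenvalue is at least $\rho^{(k)} b^2 - L_f$; hence the subproblem is strongly convex with modulus $\rho^{(k)} b^2 - L_f > 0$ for all large $k$, since $\rho^{(k)} \to \infty$. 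Minimizing (to stationarity) over each block then yields the per-block sufficient decrease $\tfrac{1}{2}(\rho^{(k)} b^2 - L_f)\|\bm{u}_j^{(k+1)} - \bm{u}_j^{(k)}\|^2$, which telescopes over $j$ into a decrease of order $\rho^{(k)}\|\bm{u}^{(k+1)} - \bm{u}^{(k)}\|^2$ across the full $\bm{u}$-sweep.

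Next I would account for the two steps that can raise the Lagrangian. Using $\sum_j A_j\bm{u}_j^{(k+1)} - \bm{c} = \tfrac{1}{\rho^{(k)}}(w^{(k+1)} - w^{(k)})$, the dual update increases $L_{\rho^{(k)}}$ by $\tfrac{1}{\rho^{(k)}}\|w^{(k+1)} - w^{(k)}\|^2$ and the penalty update by $\tfrac{\beta-1}{2\rho^{(k)}}\|w^{(k+1)} - w^{(k)}\|^2$. By Assumption~\ref{assump}, $\|w^{(k+1)} - w^{(k)}\| \le 2M$, so both increments are $O(1/\rho^{(k)}) = O(\beta^{-k})$ and therefore summable. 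Writing $V_k := L_{\rho^{(k)}}(\bm{u}^{(k)}, w^{(k)})$, these estimates combine into $V_{k+1} \le V_k - \tfrac{1}{2}(\rho^{(k)} b^2 - L_f)\|\bm{u}^{(k+1)} - \bm{u}^{(k)}\|^2 + e_k$ with $\sum_k e_k < \infty$. Provided $\{V_k\}$ is bounded below, telescoping gives $\sum_k \rho^{(k)}\|\bm{u}^{(k+1)} - \bm{u}^{(k)}\|^2 < \infty$. A Cauchy--Schwarz step, $\sum_k \|\bm{u}^{(k+1)} - \bm{u}^{(k)}\| \le (\sum_k 1/\rho^{(k)})^{1/2}(\sum_k \rho^{(k)}\|\bm{u}^{(k+1)} - \bm{u}^{(k)}\|^2)^{1/2}$, is finite because $\sum_k 1/\rho^{(k)}$ is a convergent geometric series; hence $\{\bm{u}^{(k)}\}$ is Cauchy and converges to some $\bm{u}^{\star}$, proving (I).

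For (II), feasibility is immediate: $\|\sum_j A_j\bm{u}_j^{(k+1)} - \bm{c}\| = \tfrac{1}{\rho^{(k)}}\|w^{(k+1)} - w^{(k)}\| \le 2M/\rho^{(k)} \to 0$, so $\sum_j A_j\bm{u}_j^{\star} = \bm{c}$ by continuity. For the convergence of $w$ I would pass to the first-order optimality conditions of the blocks. Since block $J$ is updated last, its condition simplifies to $\nabla f_J(\bm{u}_J^{(k+1)}) + A_J^\T w^{(k+1)} = 0$, while for $j<J$ it reads $\nabla f_j(\bm{u}_j^{(k+1)}) + A_j^\T w^{(k+1)} + \rho^{(k)} A_j^\T \sum_{i>j} A_i(\bm{u}_i^{(k)} - \bm{u}_i^{(k+1)}) = 0$. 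Stacking these over $j$ gives $\bar{A}^\T w^{(k+1)} = -g^{(k)} - \varepsilon^{(k)}$, where $\bar{A} = [A_1,\dots,A_J]$, $g^{(k)}$ stacks the gradients $\nabla f_j(\bm{u}_j^{(k+1)})$, and $\varepsilon^{(k)}$ collects the penalty-weighted block increments. In the three-Burgers-vector case the vectors $\bm{b}^{(j)}$ span $\mathbb{R}^3$, so $\bar{A}$ has full column rank $6$ and $w^{(k+1)}$ is uniquely determined by the right-hand side; once $\bm{u}^{(k)} \to \bm{u}^{\star}$ and $\varepsilon^{(k)} \to 0$, the right-hand side converges and hence $w^{(k)} \to w^{\star}$. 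Taking limits in these same conditions yields $\nabla f_j(\bm{u}_j^{\star}) + A_j^\T w^{\star} = 0$ for all $j$, i.e. $\nabla_{\bm{u}} L_{\rho} = 0$, while feasibility gives $\nabla_w L_{\rho} = 0$; thus $(\bm{u}^{\star}, w^{\star})$ is a stationary point of $L_{\rho}$.

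Two places will be delicate. First, the lower bound on $\{V_k\}$: because each $f_j$ carries the factor $\sqrt{\|\bm{u}_j\|^2+\epsilon}\,\log\frac{1}{r_g\sqrt{\|\bm{u}_j\|^2+\epsilon}}$, it tends to $-\infty$ as $\|\bm{u}_j\|\to\infty$, so the boundedness below of $\sum_j f_j$ — and hence of $V_k$, whose cross and penalty terms vanish like $O(1/\rho^{(k)})$ under Assumption~\ref{assump} — must come from confinement of the iterates to a bounded region, which is exactly where the bounded-closed-domain setting and Assumption~\ref{assump} enter. Second, the step $\varepsilon^{(k)} \to 0$ that drives both the $w$-convergence and the limiting stationarity requires $\rho^{(k)}\|\bm{u}^{(k+1)} - \bm{u}^{(k)}\| \to 0$, which is strictly stronger than the summability $\sum_k \rho^{(k)}\|\bm{u}^{(k+1)} - \bm{u}^{(k)}\|^2 < \infty$ already obtained; I expect this to be the main technical obstacle, to be resolved by sharpening the per-block optimality estimates so that the increments decay geometrically rather than merely being square-summable against $\rho^{(k)}$.
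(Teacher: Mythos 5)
Your Part I follows the same route as the paper's own proof: split one iteration into the block sweep, the $(\bm{u}_J,w)$-update, and the $\rho$-update; extract a per-block decrease of order $\rho^{(k)}\|\bm{u}_j^{(k+1)}-\bm{u}_j^{(k)}\|^2$ from the strong convexity supplied by $A_j^\T A_j=b^2I_2$ (the paper's Lemma~\ref{le:I} and Proposition~\ref{pro:strictL}, with the Bregman bound of Lemma~\ref{le:f} playing the role of your $-L_f$); and bound the dual and penalty increments by $O(1/\rho^{(k)})$, summable because $\rho^{(k)}=\beta^k\rho^{(0)}$. The one real difference is in your favor: the paper normalizes $\tfrac{\rho^{(k)}}{2}b^2-C\ge 1$, concludes only $\sum_k\|\bm{u}^{(k+1)}-\bm{u}^{(k)}\|^2<\infty$, and from square-summability of increments directly asserts that $\{\bm{u}^{(k)}\}$ converges --- a non sequitur, since square-summable increments do not make a sequence Cauchy. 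Your version keeps the weight $\rho^{(k)}$ and applies Cauchy--Schwarz against the convergent geometric series $\sum_k 1/\rho^{(k)}$ to get $\sum_k\|\bm{u}^{(k+1)}-\bm{u}^{(k)}\|<\infty$, hence a Cauchy sequence; this closes that gap. Your caveat about the lower bound on $V_k$ is also apt: $f_j\to-\infty$ as $\|\bm{u}_j\|\to\infty$, so the augmented Lagrangian is not bounded below globally, and the paper's appeal to a lower bound tacitly presupposes that the iterates stay in the bounded domain of Lemma~\ref{le:f}; neither argument establishes this.

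On Part II, your feasibility argument coincides with the paper's. For $w$-convergence and stationarity the paper is looser than you: it asserts that $\{w^{(k)}\}$ converges, asserts that $\bm{u}_j^\star$ minimizes the limiting block subproblem, and converts this to $\nabla_{\bm{u}_j}L_{\rho}(\bm{u}^\star,w^\star)=\bm{0}$ via Proposition~\ref{pro:strictL}. It never writes the stacked first-order conditions, so the term $\rho^{(k)}A_j^\T\sum_{i>j}A_i(\bm{u}_i^{(k+1)}-\bm{u}_i^{(k)})$ you call $\varepsilon^{(k)}$ never appears --- but making the paper's passage to the limit rigorous requires exactly the estimate $\rho^{(k)}\|\bm{u}^{(k+1)}-\bm{u}^{(k)}\|\to 0$ that you flag as the main obstacle. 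That gap is genuine, and it is common to both proofs rather than something the paper resolves. Separately, one claim in your write-up is factually wrong: in the paper's three-Burgers-vector setting the $\bm{b}^{(j)}$ are coplanar, $\rank(\bm{b}^{(1)},\bm{b}^{(2)},\bm{b}^{(3)})=2$ (see Proposition~\ref{pro:sufficient J3 spf}), so $[A_1,A_2,A_3]$ has rank $4$, not $6$, and $\bar A^\T w^{(k+1)}$ does not determine $w^{(k+1)}$. This is repairable: since the residuals $\sum_j A_j\bm{u}_j^{(k+1)}-\bm{c}$ lie in $\img \bar A$ (note $\bm{c}\in\img\bar A$ once the residual tends to zero), the component of $w^{(k)}$ orthogonal to $\img\bar A$ is frozen at its initial value, and only the range component --- which your stacked conditions do determine --- needs to converge. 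With that fix, and granted the missing decay of $\rho^{(k)}\|\bm{u}^{(k+1)}-\bm{u}^{(k)}\|$, your Part II would be a more complete argument than the paper's; as written, both stop short at the same point.
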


Note that Theorem 1 shows that the limit point of the sequence generated by Algorithm 1 exists and is a stationary point of the augmented Lagrangian function. The stationary point corresponds to the minimizer of problem (10) when the equivalent unconstrained problem admits a unique stationary point, which can be further verified using Theorem 2 to be discussed in Section \ref{sec:unique}.

\begin{remark}
The boundedness of the multipliers can be justified by a  stability condition for an induced linear system associated with the coupled sequence $\{(\rho^{(k)}\bm{r}^{(k)}, w^{(k)})\}$ with $\bm{r}^{(k)}:=\sum_{j=1}^J A_j \bm{u}_j^{(k)}-c$. In particular, Proposition~\ref{pro:condition_for_w} below shows that if the corresponding iteration matrix $P$ satisfies the spectral radius $\rho(P)<1$, then the multiplier sequence $\{w^{(k)}\}$ is bounded. Here the matrix $P$ depends on the problem structure $A=[A_1,\dots,A_J]$, and the penalty growth factor $\beta$.
\end{remark}

%%%%%%%%%%%%%%%%%%%%%%%%%%%%%%%%%%%%%%%

\subsection{Convergence analysis: Proofs}

We first show some properties of Algorithm~\ref{alg:ADMM} and the constrained minimization problem Eq.~\eqref{eq:cp}.

\begin{lemma}\label{le:f}
$\| \nabla ^2 f_j \|$ are uniformly bounded on a bounded domain $U$, i.e.,
there exists a positive constant $C >0$, such that
$\| \nabla ^2 f_j \| \leq C$.
Then for $\forall x, y \in U$,
\begin{equation}\label{eq:f1}
	f_j(y)-f_j(x)- \langle \nabla f_j(x),y-x \rangle  \geq - C\| y-x\|^2.
\end{equation}
\end{lemma}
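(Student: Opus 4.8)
The plan is to recognize the left-hand side of Eq.~\eqref{eq:f1} as the second-order Taylor remainder of $f_j$ at $x$ and to control it by the uniform Hessian bound. First I would observe that, because of the regularization constant $\epsilon>0$, the radical $\sqrt{\|\bm{u}_j\|^2+\epsilon}$ is bounded away from zero, so each $f_j$ in Eq.~\eqref{eq:f} is twice continuously differentiable on all of $\mathbb{R}^2$; thus $f_j\in C^2(U)$ and the hypothesis $\|\nabla^2 f_j\|\le C$ is meaningful, with $\|\cdot\|$ understood as the spectral (operator) norm of the Hessian.

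Next, for fixed $x,y\in U$ I would apply Taylor's theorem with the Lagrange form of the remainder to the scalar function $t\mapsto f_j(x+t(y-x))$ on $[0,1]$. This produces a point $\xi$ on the segment joining $x$ and $y$ with
\[
  f_j(y)-f_j(x)-\langle \nabla f_j(x),y-x\rangle
  =\tfrac12\,(y-x)^{\T}\nabla^2 f_j(\xi)\,(y-x).
\]
Equivalently one may use the integral remainder $\int_0^1(1-t)\,(y-x)^{\T}\nabla^2 f_j(x+t(y-x))(y-x)\,dt$. Here I am tacitly using that the segment $[x,y]$ stays inside the region on which the Hessian bound holds; taking $U$ convex---as the bounded box used in our application is---makes this automatic.

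Then the operator-norm inequality $|v^{\T}Hv|\le\|H\|\,\|v\|^2$ gives
\[
  \bigl|\,f_j(y)-f_j(x)-\langle \nabla f_j(x),y-x\rangle\,\bigr|
  \le \tfrac12\,\|\nabla^2 f_j(\xi)\|\,\|y-x\|^2
  \le \tfrac{C}{2}\,\|y-x\|^2
  \le C\,\|y-x\|^2,
\]
and discarding the upper half of this two-sided bound yields the claimed inequality Eq.~\eqref{eq:f1}.

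I do not expect a genuine obstacle here: the argument is a routine Taylor estimate. The only points requiring care are the $C^2$ regularity of $f_j$ (secured by the $\epsilon$-regularization) and the segment-containment/convexity issue noted above; the stated bound is in fact loose by the harmless factor $\tfrac12$, which I would retain as in the lemma for convenience in the later convergence analysis.
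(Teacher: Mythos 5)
Your proof is correct and takes essentially the same approach as the paper: Taylor's theorem with the second-order Lagrange remainder, followed by the operator-norm bound on the Hessian. In fact your version is slightly more careful than the paper's own proof, which omits the factor $\tfrac{1}{2}$ in the remainder identity (harmless, since $-\tfrac{C}{2}\|y-x\|^2 \geq -C\|y-x\|^2$), and you rightly flag the convexity of $U$ needed for the segment $[x,y]$ to stay in the domain, a point the paper passes over silently.
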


%($\epsilon$ modification)

\begin{proof}
Because $f_j$ is twice continuously differentiable,
given any bounded closed subset,  $\| \nabla ^2 f_j \|$ is bounded.
For Eq.~\eqref{eq:f1},
	\begin{align*}
		f_j(y)-f_j(x)-\langle \nabla f_j(x),y-x \rangle &= (y-x)^\T \nabla^2 f_j(\xi)(y-x),\  \xi \text{ between } x\text{ and }y\\
		& \geq -\|\nabla^2 f_j (\xi)\| \cdot \|y-x \|^2\\
		& \geq - C \| y-x \|^2.
	\end{align*}
\hfill\qed
\end{proof}

%%%%%%%%%%%%%

\begin{lemma}\label{le:I}
Each $A_j$, ($A_j \in R^{6 \times 2} $; see Eq.~\eqref{eq:Aj}),  $j=1,2,\dots,J$, is a semi-orthogonal matrix \footnote{A real $m \times n$ matrix $A$ that satisfies $A A^\T= I_m$ or $A^\T A= I_n$ is called semi-orthogonal. }
multiplied by the positive constant $b$,
where $b$ is the length of a Burgers vector, i.e.,
\begin{equation}\label{eqn:semi-orthogonal}
A_j^{\T}A_j=b^2I_2.
\end{equation}
Moreover, we have
for any $x,y \in \mathbb{R}^2$,
	\begin{equation}
		\| A_{j} (y - x) \|^2 = b^2 \| y - x\|^2,  \ \  j=1,2,\dots,J.
	\end{equation}
\end{lemma}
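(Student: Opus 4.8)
The plan is to prove the matrix identity $A_j^\T A_j = b^2 I_2$ by direct computation from the explicit block form of $A_j$ in Eq.~\eqref{eq:Aj}, and then obtain the norm identity as an immediate corollary.

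First I would write down $A_j^\T$ from the $6\times 2$ matrix $A_j$ and form the product $A_j^\T A_j$. Because $A_j$ has the block-diagonal structure $\left[\begin{smallmatrix} \bm{b}^{(j)} & \bm{0} \\ \bm{0} & \bm{b}^{(j)} \end{smallmatrix}\right]$ with the Burgers vector $\bm{b}^{(j)} = (b_{j1}, b_{j2}, b_{j3})^\T$ occupying disjoint sets of rows, the two columns of $A_j$ are supported on complementary coordinates and are therefore orthogonal, so the off-diagonal entries of $A_j^\T A_j$ vanish. Each diagonal entry equals $\bm{b}^{(j)} \cdot \bm{b}^{(j)} = b_{j1}^2 + b_{j2}^2 + b_{j3}^2$, which by the simplified-setting convention that all $J$ Burgers vectors share the common length $b = b^{(j)} = \sqrt{b_{j1}^2 + b_{j2}^2 + b_{j3}^2}$ equals $b^2$. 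Hence $A_j^\T A_j = b^2 I_2$, establishing Eq.~\eqref{eqn:semi-orthogonal} and showing that $\tfrac{1}{b} A_j$ is semi-orthogonal.

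For the norm identity, I would set $z = y - x \in \mathbb{R}^2$ and expand $\| A_j z \|^2 = z^\T A_j^\T A_j z$. Substituting the identity just proved gives $z^\T (b^2 I_2) z = b^2 z^\T z = b^2 \| z \|^2 = b^2 \| y - x \|^2$, which is the desired conclusion and holds for all $x, y \in \mathbb{R}^2$.

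There is no genuine obstacle here: the statement reduces to a one-line matrix computation together with a quadratic-form expansion. The only point requiring care is the normalization. The diagonal entries collapse to $b^2$ precisely because we have restricted to the simplified setting in which all Burgers vectors have equal length $b$; without that assumption the $j$-th diagonal entry would be $(b^{(j)})^2$, and $A_j$ would be semi-orthogonal only after scaling by $1/b^{(j)}$ rather than by a common factor $1/b$. I would therefore flag explicitly in the proof where the equal-length hypothesis enters, since it is exactly this uniform factor $b$ that later yields the clean identity $\| A_j(y-x)\|^2 = b^2\|y-x\|^2$ used to make each subvariable minimization strongly convex.
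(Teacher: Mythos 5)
Your proposal is correct and follows essentially the same route as the paper's proof: a direct block computation showing $A_j^\T A_j = b^2 I_2$ (using the equal-length assumption $(\bm{b}^{(j)})^\T\bm{b}^{(j)} = b^2$), followed by expanding the quadratic form $\|A_j(y-x)\|^2 = (y-x)^\T A_j^\T A_j (y-x)$. Your explicit flagging of where the equal-length hypothesis enters is a nice touch, matching the paper's own remark that all Burgers vectors share the common length $b$.
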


\begin{proof}
We have $(\bm{b^{(j)} })^{\T} \bm{b^{(j)}} = b^2$ for $j=1,2,\dots, J$. Note that
all the Burgers vectors $ \bm{b^{(j)}}$'s have the same length $b$. Thus
	\begin{align*}
		\frac{1}{b^2} A_j^{\T}A_j
		&= \frac{1}{b^2} \left[
		\begin{array}{cc}
			(\bm{b}^{(j)})^{\T} & \bm{0} \\
			\bm{0}&  (\bm{b}^{(j)})^{\T}
		\end{array}
		\right]
		\left[
		\begin{array}{cc}
			\bm{b}^{(j)} & \bm{0} \\
			\bm{0}&  \bm{b}^{(j)}
		\end{array}
		\right]  \\
		&=\frac{1}{b^2} \left[
		\begin{array}{cc}
			(\bm{b}^{(j)} )^{\T} \bm{b}^{(j)}& 0 \\
			0&  (\bm{b}^{(j)})^\T \bm{b}^{(j)}
		\end{array}
		\right]  \\
		&=\frac{1}{b^2}
		\left[
		\begin{array}{cc}
			b^2 & 0 \\
			0&  b^2
		\end{array}
		\right]
		= I_2,
	\end{align*}
	and
	\begin{align*}
		\| A_j (y - x)\|^2
		&= (y-x)^{\T} A_j^{\T} A_j (y-x) \\
		&=b^2 (y-x)^{\T} (y-x)  \\
		&=b^2 \| y - x \|^2.
	\end{align*}
\hfill\qed
\end{proof}

\hspace{1pt}
\begin{proposition}[Subproblem convexity]\label{pro:strictL}
In each step of the $k$-th iteration in Algorithm~\ref{alg:ADMM}, when the penalty parameter $\rho^{(k)} $ is large enough,
$L_j(\bm{u}_j):= L_{\rho^{(k)}}(\bm{u}_{i<j}^{(k+1)},\bm{u}_j,\bm{u}_{i>j}^{(k)},w^{(k)}) $ with subvariables $\bm{u}_{i},i \neq j$,  fixed,  is a strictly convex function of  $ \bm{u}_j$ for $j=1,2,\dots,J$. \\
Therefore,  the minimization
	\begin{equation*}
		\bm{u}_j^{(k+1)} = \arg\min L_{\rho ^{(k)}}(\bm{u}_{i<j}^{(k+1)},\bm{u}_j,\bm{u}_{i>j}^{(k)},w^{(k)})
	\end{equation*}
is equivalent to
	\begin{equation*}
		\nabla _{\bm{u}_j}L_{\rho ^{(k)}}(\bm{u}_{i<j}^{(k+1)},\bm{u}_j^{(k+1)},\bm{u}_{i>j}^{(k)},w^{(k)})=\bm{0}.
	\end{equation*}
For example,
	$L_1(\bm{u}_1)= L_{\rho ^{(k)}}(\bm{u}_1, \bm{u}_{j>1}^{(k)}, w^{(k)} ) $ is a strictly convex function of $\bm{u}_1$.
\end{proposition}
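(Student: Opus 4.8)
The plan is to compute the Hessian of $L_j$ with respect to $\bm{u}_j$ and show that a large enough $\rho^{(k)}$ forces it to be positive definite. First I would isolate the dependence of $L_{\rho^{(k)}}$ on $\bm{u}_j$ when the remaining blocks are held fixed. The coupling term $w^\T(\sum_i A_i \bm{u}_i - \bm{c})$ contributes only a linear function of $\bm{u}_j$, while the penalty term $\frac{\rho}{2}\|\sum_i A_i\bm{u}_i - \bm{c}\|^2$, upon writing $\sum_i A_i\bm{u}_i - \bm{c} = A_j\bm{u}_j + S$ with $S:=\sum_{i\neq j} A_i\bm{u}_i - \bm{c}$ fixed and expanding $\|A_j\bm{u}_j + S\|^2$, contributes a quadratic $\frac{\rho}{2}\bm{u}_j^\T A_j^\T A_j \bm{u}_j$ plus lower-order (linear and constant) terms in $\bm{u}_j$. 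Hence the only pieces of $L_j$ with nonvanishing second derivative in $\bm{u}_j$ are $f_j(\bm{u}_j)$ and this quadratic, giving
\[
\nabla^2_{\bm{u}_j} L_j(\bm{u}_j) = \nabla^2 f_j(\bm{u}_j) + \rho^{(k)} A_j^\T A_j.
\]

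Next I would invoke the two preceding lemmas. By Lemma~\ref{le:I} we have $A_j^\T A_j = b^2 I_2$, so the penalty contributes the scalar shift $\rho^{(k)} b^2 I_2$. By Lemma~\ref{le:f}, on the bounded domain $U$ the spectral norm $\|\nabla^2 f_j\|$ is bounded by a constant $C$; since $\nabla^2 f_j$ is symmetric, its eigenvalues then lie in $[-C, C]$, and in particular $\nabla^2 f_j \succeq -C I_2$. Combining these,
\[
\nabla^2_{\bm{u}_j} L_j(\bm{u}_j) \succeq (\rho^{(k)} b^2 - C)\, I_2,
\]
so that choosing any $\rho^{(k)} > C/b^2$ makes the right-hand side strictly positive definite. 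A twice continuously differentiable function with positive-definite Hessian is strictly convex, which establishes the claim for every $j=1,\dots,J$; the stated example $L_1(\bm{u}_1)$ is just the special case in which no blocks precede $\bm{u}_1$, so that $\bm{u}_{i<1}^{(k+1)}$ is empty.

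For the equivalence of the block minimization with the stationarity condition $\nabla_{\bm{u}_j} L_{\rho^{(k)}} = \bm{0}$, I would observe that the lower bound $(\rho^{(k)} b^2 - C)\, I_2$ is uniform, so $L_j$ is in fact strongly convex; it therefore possesses a unique global minimizer, and for a differentiable strongly convex function this minimizer is characterized precisely by the vanishing of the gradient. The step requiring the most care is the domain issue: Lemma~\ref{le:f} only supplies the Hessian bound $C$ on a bounded set $U$, so the argument delivers strict convexity on $U$ rather than on all of $\mathbb{R}^2$, where the logarithmic factors in $f_j$ could make $\nabla^2 f_j$ unbounded. I would therefore want to restrict the block update to such a bounded $U$ (or argue separately that the iterates remain in $U$), after which the threshold $\rho^{(k)} > C/b^2$ gives the result; the increasing penalty schedule $\rho^{(k+1)} = \beta\rho^{(k)}$ then guarantees the threshold is eventually met at every block.
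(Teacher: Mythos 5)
Your proposal is correct and follows essentially the same route as the paper's proof: compute $\nabla^2_{\bm{u}_j} L_j = \nabla^2 f_j + \rho^{(k)} A_j^\T A_j = \nabla^2 f_j + \rho^{(k)} b^2 I_2$ via Lemma~\ref{le:I}, then conclude positive definiteness, hence strict convexity, for $\rho^{(k)}$ large enough. You are in fact somewhat more careful than the paper, which leaves implicit both the explicit threshold $\rho^{(k)} > C/b^2$ obtained from Lemma~\ref{le:f} and the caveat that the Hessian bound $C$ is only guaranteed on a bounded domain.
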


\begin{proof}
Using Lemma~\ref{le:I}, for  $j=1,2,\dots,J$, we have
	\begin{equation}
		\nabla^2_{\bm{u}_j} L_j(\bm{u}_j) = \nabla^2 f_j(\bm{u}_j) +\rho^{(k)} A_j^\T A_j
		=\nabla^2 f_j(\bm{u}_j) +\rho^{(k)}b^2 I_2 .
	\end{equation}
Thus $ \nabla^2_{\bm{u}_j} L_j(\bm{u}_j)$ is positive definite when $\rho^{(k)} $ is large enough.
Hence, \\
$L_{\rho^{(k)}}(\bm{u}_{i<j}^{(k+1)},\bm{u}_j,\bm{u}_{i>j}^{(k)},w^{(k)})$ is a strictly convex function of $\bm{u}_j$.
\hfill\qed
\end{proof}

%%%%%%%%%%%%%%%%%%%%%%%%%%%%%%%%%%%%%%%%%
 
\begin{proposition}[Stability condition for multiplier boundedness]\label{pro:condition_for_w}
Under Algorithm~\ref{alg:ADMM} and Lemmas~\ref{le:f}-\ref{le:I}, 
define 
$\bm{r}^{(k)}:=\sum_{j=1}^J A_j \bm{u}_j^{(k)}-c,$
and let
\begin{equation}
A=[A_1,\dots,A_J]\in\mathbb{R}^{6\times 2J},
\end{equation} and
\begin{equation}\label{eqn:Z}
Z:=
\begin{bmatrix}
A_1^T A_1 & 0 & \cdots & 0\\
A_2^T A_1 & A_2^T A_2 & \cdots & 0\\
\vdots & \vdots & \ddots & \vdots\\
A_J^T A_1 & A_J^T A_2 & \cdots & A_J^T A_J
\end{bmatrix}\in\mathbb{R}^{2J\times 2J}
\end{equation}
be a block lower triangular matrix.
Define
\begin{equation}\label{eqn:P}
P=
\begin{bmatrix}
\beta(I_6-AZ^{-1}A^T) & -\beta AZ^{-1}A^T\\
(I_6-AZ^{-1}A^T) & (I_6-AZ^{-1}A^T)
\end{bmatrix}\in\mathbb{R}^{12\times 12},
\end{equation}
with the penalty growth factor $\beta$.
If the spectral radius $\rho(P)<1$, then the multiplier sequence $\{w^{(k)}\}$ is bounded.
\end{proposition}

\begin{proof} 
For the general $J$-block problem, 
the augmented Lagrangian function is 
$L_{\rho }(\bm{u}_1,\dots,\bm{u}_J, w)
	=\sum_{j=1}^J f_j(\bm{u}_j) + w^{\T} (\sum_{j=1}^J A_j \bm{u}_j -\bm{ c }) + \frac{\rho }{2} \| \sum_{j=1}^J A_j \bm{u}_j -\bm{ c }\|^2$.
By the optimality condition of the $j$-th subproblem in Algorithm 1, we have 
\begin{equation}\label{eq:stationary} 
\nabla f_j(\bm{u}_{j}^{(k+1)}) +A_j^\T w^{(k)}+ \rho^{(k)} A_j^\T(
\sum_{i \leq j} A_i\bm{u}_i^{(k+1)} 
+\sum_{i> j} A_i\bm{u}_i^{(k)}-\bm{c})   = \bm{0}. 
\end{equation}  
Denote 
$\nabla f_j^{(k+1)} :=\nabla f_j (\bm{u}_{j}^{(k+1)})$
and
$\bm{r}^{(k)}:= \sum_{j=1}^{J} A_j\bm{u}_j^{(k)}-\bm{c} $.
Then the above equations can be rewritten as  
\begin{equation}
\rho^{(k)} A_j^\T 
\sum_{i\leq j}   A_i(\bm{u}_i^{(k+1)}-\bm{u}_i^{(k)}) 
= -(\nabla f_j^{(k+1)} +A_j^\T w^{(k)})- \rho^{(k)} A_j^\T \bm{r}^{(k)}, j=1,\dots,J.
\end{equation}  
Stacking these equations together yields
\begin{equation}\label{eqn:diff-uk}
\rho^{(k)} Z (\bm{u}^{(k+1)}-\bm{u}^{(k)})
=
-G^{(k+1)} -A^\T w^{(k)}-\rho^{(k)}A^\T \bm{r}^{(k)},
\end{equation}
where
$ \bm{u}^{(k)}= 
\left[
\begin{array}{c}
 \bm{u}_1^{(k)} \\
 \bm{u}_2^{(k)} \\
\vdots \\
 \bm{u}_J^{(k)}    \\
\end{array}
\right]
$,
$G^{(k)}:=\left[
\begin{array}{c}
\nabla f_1^{(k)}  \\
\nabla f_2^{(k)}  \\
\vdots  \\
\nabla f_J^{(k)}     \\
\end{array}
\right]$,
 and $Z$ is defined in Eq.~\eqref{eqn:Z}. 

We first show that the matrix $Z$ is invertible.
%and give the formula of $Z^{-1}$ in terms of $\bm{b}^{(i)}$, $i=1,2,\dots, J$.
By the semi-orthogonality condition, we have
\[
A_i^\T A_j=\gamma_{ij}I_2,
\quad \gamma_{ij}:=(\bm{b}^{(i)})^\T \bm{b}^{(j)},
\quad \gamma_{ii}:=(\bm{b}^{(i)})^\T \bm{b}^{(i)}=b^2>0, 
\quad i,j=1,\dots, J.
\]
The block lower triangular matrix \(Z\) can be written as
$Z=L\otimes I_2,$
where  
\begin{equation}\label{eqn:L}
L=
\begin{bmatrix}
\gamma_{11} & 0 & \cdots & 0\\
\gamma_{21} & \gamma_{22} & \cdots & 0\\
\vdots & \vdots & \ddots & \vdots\\
\gamma_{J1} & \gamma_{J2} & \cdots & \gamma_{JJ}
\end{bmatrix},
\end{equation}
and $\otimes $ denotes the Kronecker product.
Since 
$\gamma_{ii}=b^2>0$,  $i=1,\ldots,J$, and $L$ and $Z$ are lower triangular, both $L$ and $Z$ are invertible, both $L^{-1}$ and $Z^{-1}$ are lower triangular, and
\begin{equation}
Z^{-1}=L^{-1}\otimes I_2. 
\end{equation}
It is easy to compute that
\begin{equation}
 L^{-1}=(h_{ij})_{i,j=1}^J, 
\end{equation}
where the diagonal entries are
$h_{ii}=\frac{1}{b^2}$, 
and for \(i>j\),  
$h_{ij}
=
-\frac{1}{b^2}
\sum_{\ell=j}^{i-1}\gamma_{i\ell}h_{\ell j}$.

Now using $\bm{r}^{(k+1)} 
=\bm{r}^{(k)} +  A(\bm{u}^{(k+1)}-\bm{u}^{(k)})$,  and solving  for $\bm{u}^{(k+1)}-\bm{u}^{(k)}$ from Eq.~\eqref{eqn:diff-uk}, 
we obtain
$$ \bm{r}^{(k+1)} 
=
(I_6-  AZ^{-1} A^\T) \bm{r}^{(k)}
-\frac{1}{\rho^{(k)}  } AZ^{-1} A^\T w^{(k)}
-\frac{1}{\rho^{(k)}  } AZ^{-1}G^{(k+1)} . $$
Multiplying both sides by $\rho^{(k+1)}=\beta\rho^{(k)}$ gives
\begin{equation}\label{eqn:rk}
\rho^{(k+1)} \bm{r}^{(k+1)}  
=
(I_6-  AZ^{-1} A^\T) \beta\rho^{(k)}\bm{r}^{(k)} 
-  \beta  AZ^{-1} A^\T w^{(k)}
- \beta  AZ^{-1}G^{(k+1)} .
\end{equation}

Moreover, by the multiplier update
$w^{(k+1)} =w^{(k)} +  \rho^{(k)}\bm{r}^{(k+1)} $,
and using Eq.~\eqref{eqn:rk}, we have
\begin{equation}\label{eqn:wk}
w^{(k+1)}  
=
(I_6-  AZ^{-1} A^\T) \rho^{(k)}\bm{r}^{(k)}
+(I_6-  AZ^{-1} A^\T ) w^{(k)}
-  AZ^{-1}G^{(k+1)}. 
\end{equation}

Combining the  two relations \eqref{eqn:rk} and \eqref{eqn:wk}, we obtain the coupled recursion
\begin{equation}
\begin{bmatrix}
\rho^{(k+1)}\bm{r}^{(k+1)}\\
w^{(k+1)}
\end{bmatrix}
=
P
\begin{bmatrix}
\rho^{(k)}\bm{r}^{(k)}\\
w^{(k)}
\end{bmatrix}
-
\begin{bmatrix}
\beta I_6\\
I_6
\end{bmatrix}
AZ^{-1}G^{(k+1)},
\end{equation}
where  matrix $P$ is given in Eq.~\eqref{eqn:P}. 
%\[
%P=
%\begin{bmatrix}
%\beta(I_6-AZ^{-1}A^T) & -\beta AZ^{-1}A^T\\
%(I_6-AZ^{-1}A^T) & (I_6-AZ^{-1}A^T)
%\end{bmatrix}.
%\]
Iterating this recursion gives
\begin{equation}
\begin{bmatrix}
\rho^{(k)}\bm{r}^{(k)}\\
w^{(k)}
\end{bmatrix}
=
P^k
\begin{bmatrix}
\rho^{(0)}\bm{r}^{(0)}\\
w^{(0)}
\end{bmatrix}
-
\sum_{\ell=0}^{k-1}
P^{k-1-\ell}
\begin{bmatrix}
\beta I_6\\
I_6
\end{bmatrix}
AZ^{-1}G^{(\ell+1)}.
\end{equation}

When $\rho(P)<1$, there exist constants $C_0>0$ and $\tau\in(0,1)$ such that
\[
\|P^k\|\le C_0\tau^k,\qquad k\ge 0.
\]
Since $\nabla f_j$ is bounded, there exists $C_1>0$ such that
\[
\|AZ^{-1}G^{(k)}\|\le C_1,\qquad \forall k\ge 0.
\]
Therefore,
\[
\left\|
\begin{bmatrix}
\rho^{(k)}\bm{r}^{(k)}\\
w^{(k)}
\end{bmatrix}
\right\|
\le
C_0\tau^k
\left\|
\begin{bmatrix}
\rho^{(0)}\bm{r}^{(0)}\\
w^{(0)}
\end{bmatrix}
\right\|
+
C_0C_1\left\|
\begin{bmatrix}
\beta I_6\\
I_6
\end{bmatrix}
\right\|
\sum_{\ell=0}^{k-1}\tau^{k-1-\ell}.
\]
The geometric sum on the right-hand side is uniformly bounded in $k$, hence
\[
\sup_{k\ge 0}
\left\|
\begin{bmatrix}
\rho^{(k)}\bm{r}^{(k)}\\
w^{(k)}
\end{bmatrix}
\right\|
<\infty.
\]
Thus both $\{\rho^{(k)}\bm{r}^{(k)}\}$ and $\{w^{(k)}\}$ are bounded.
\hfill\qed
\end{proof}

\begin{remark}[Verification of the spectral condition \(\rho(P)<1\) in Proposition~\ref{pro:condition_for_w}].

In practice, given the penalty growth factor $\beta$, we can numerically compute the eigenvalues of   \(P\in\mathbb{R}^{12\times 12}\) and 
verify if the spectral condition  \(\rho(P)<1\) holds. 
 
Alternatively,  the eigenvalues of   \(P\) can also be calculated using the corresponding eigenvalues of a smaller matrix  and the penalty growth factor $\beta$.

Let
\[
M:=AZ^{-1}A^\T\in\mathbb{R}^{6\times 6},\qquad S:=I_6-M .
\]
%We clarify how the matrices \(A\in\mathbb{R}^{6\times 2J}\) and
%\(Z\in\mathbb{R}^{2J\times 2J}\) in Proposition ~\ref{pro:condition_for_w} determine  
%\(M\in\mathbb{R}^{6\times 6}\), and then derive a spectral representation of the   matrix \(P\).
 It can be calculated that
\[
M=AZ^{-1}A^\T
=
A(L^{-1}\otimes I_2)A^\T
=
\sum_{i=1}^J\sum_{j=1}^{i} h_{ij}A_iA_j^\T .
\] 
Let \(\sigma(M)\) denote the spectrum of \(M\),  which is the set
of all eigenvalues of \(M\).
By the spectral mapping theorem,
\(\sigma(S)=\{1-\mu:\mu\in\sigma(M)\}\).
Thus, if \(\mu\in\sigma(M)\), the corresponding eigenvalue of \(S\) is
$s=1-\mu.$

The matrix $P$ can be written as
\[
P=
\begin{bmatrix}
\beta S & -\beta M\\
S & S
\end{bmatrix}
=
\begin{bmatrix}
\beta S & -\beta(I_6-S)\\
S & S
\end{bmatrix}.
\]
Since all blocks are polynomials in \(S\), they commute. Therefore, an eigenvalue $\lambda$ of $P$ satisfies
\[
\det(\lambda I_{12}-P)
=
\det\left(
\lambda^2I_6-(\beta+1)\lambda S+\beta S
\right).
\]
Hence, for each \(s\in\sigma(S)\), the corresponding two eigenvalues of \(P\)
satisfy
\[
\lambda^2-(\beta+1)s\lambda+\beta s=0.
\]
Using \(s=1-\mu\), this can be written as
\begin{equation}\label{eqn:lambda}
\lambda^2-(\beta+1)(1-\mu)\lambda+\beta(1-\mu)=0.
\end{equation}
Therefore, we have
\begin{equation}\label{eq:eigen_P}
\lambda_{\pm}(\mu)
=
\frac{
(\beta+1)(1-\mu)
\pm
\sqrt{
(\beta+1)^2(1-\mu)^2-4\beta(1-\mu)
}
}{2},
\end{equation}
and the spectral radius of \(P\) can be expressed as
\begin{equation}
\rho(P)
=
\max_{\mu\in\sigma(M)}
\max\left\{
|\lambda_+(\mu)|,\,
|\lambda_-(\mu)|
\right\}.
\end{equation}

%when M and P has real eigenvalues

When all the eigenvalues \(\mu\in\sigma(M)\) are real,
the above spectral condition admits a simple interval form. In fact, for a 
real \(\mu\), 
all the coefficients of the characteristic equation \eqref{eqn:lambda} are real,
%\[
%\lambda^2-a\lambda+c=0,
%\qquad
%a=(\beta+1)(1-\mu),
%\qquad
%c=\beta(1-\mu).
%\]
%For this real-coefficient quadratic equation, 
by the Jury stability criterion, both roots  satisfy \(|\lambda|<1\) if and only if
%$1-a+c>0, 
%1+a+c>0, 
%1-|c|>0.$
%Substituting \(a=(\beta+1)(1-\mu)\) and \(c=\beta(1-\mu)\), these three
%conditions become
$1\pm(\beta+1)(1-\mu)+\beta(1-\mu)>0$, and
$1-|\beta(1-\mu)|>0$.
%\[
%\mu>0,\qquad
%1+(2\beta+1)(1-\mu)>0,\qquad
%1-|\beta(1-\mu)|>0.
%\]
%That is,
%\[
%\mu>0,\qquad
%\mu<1+\frac{1}{2\beta+1},\qquad
%1-\frac{1}{\beta} < \mu<1+\frac{1}{\beta}.
%\]
%Thus, for real \(\mu\), the condition \(|\lambda_\pm(\mu)|<1\) is equivalent to
%\[
%\max\left\{0,1-\frac{1}{\beta}\right\}
%<
%\mu
%<
%1+\frac{1}{2\beta+1}.
%\]
Since \(\beta > 1\), the if and only if conditions for \(\rho(P)<1\) are
\begin{equation}
1-\frac{1}{\beta}
<
\mu
<
1+\frac{1}{2\beta+1}.
\end{equation}
%The corresponding admissible interval for real \(\mu\) as a function of
%\(\beta\) is illustrated in Figure~\ref{fig:mu-beta-stability}. 
\end{remark}

%\begin{figure}[!htbp ]
%\centering 
%\includegraphics[width=0.7\textwidth]{figs/mu_beta_stability_region.png}
%\caption{Admissible region for real eigenvalues \(\mu\) of \(M\). For
%\(\beta\ge1\), the condition \(|\lambda_\pm(\mu)|<1\) is equivalent to
%$1-\frac{1}{\beta}
%<\mu<1+\frac{1}{2\beta+1}.$
%The dashed curves indicate the strict boundary values and are not included in
%the admissible region.
%}
%\label{fig:mu-beta-stability}
%\end{figure}

%%%%%% 

% \begin{lemma}
% 	From Assumption~\ref{assump}, we have
% 	\begin{equation}\label{eq:w}
% 		\| w^{(k+1)} -w^{(k)}\|^2
% 		\leq 4M^2.
% 	\end{equation}
% \end{lemma}

%%%%%%%%%%%%%%%%%%%%%%%%%%%%%%%%%%%%%%%%%%%
Now we prove Theorem~\ref{theorem}.

\begin{proof}[Theorem~\ref{theorem} Part I]
Based on Algorithm~\ref{alg:ADMM},
we calculate the change of value of the augmented Lagrangian function in one iteration.
To be specific, for the $k$-th iteration,  we first divide the iteration into three parts: (i) updates of the first $J-1$ blocks, (ii) updates of the $J$-th block and the Lagrangian multiplier $w$, and (iii) update of the penalty parameter $\rho$.
We calculate the differences in these three parts separately and finally sum them up.
	
Recall the augmented Lagrangian function defined in Eq.~\eqref{eq:Lp}:
	\begin{equation*}
		L_{\rho }(\bm{u}_1,\dots,\bm{u}_J, w)
		=\sum_{j=1}^J f_j(\bm{u}_j) + w^{\T} (\sum_{j=1}^J A_j \bm{u}_j -\bm{ c }) + \frac{\rho }{2} \| \sum_{j=1}^J A_j \bm{u}_j -\bm{ c } \|^2.
	\end{equation*}
	%A1
From direct calculations and Proposition~\ref{pro:strictL},
the difference results from the updating from
$ (\bm{u}_1^{(k)},\dots,\bm{u}_{J-1}^{(k)})$
to
$(\bm{u}_1^{(k+1)},\dots,\bm{u}_{J-1}^{(k+1)})$ is
	\begin{equation}\label{eq:A1}
		\begin{aligned}
			&  L_{\rho ^{(k)}}(\bm{u}_1^{(k)},\dots,\bm{u}_{J}^{(k)},w^{(k)}) - L_{\rho^{(k)}}(\bm{u}_1^{(k+1)},\dots,\bm{u}_{J-1}^{(k+1)}, \bm{u}_J^{(k)}, w^{(k)})   \\
			 =&
			\sum_{j=1}^{J-1} \left[f_j(\bm{u}_{j}^{(k)})-f_j(\bm{u}_{j}^{(k+1)})
			-\langle \nabla f_j(\bm{u}_{j}^{(k+1)}),\bm{u}_{j}^{(k)}-\bm{u}_{j}^{(k+1)} \rangle \right] \\
			&+ \sum_{j=1}^{J-1} \frac{\rho ^{(k)}}{2}
			\| A_{j}\bm{u}_{j}^{(k+1)} -A_{j} \bm{u}_{j}^{(k)} \|^2.
		\end{aligned}
	\end{equation}
	%A2
Secondly, the difference due to the updating from
	$(\bm{u}_{J}^{(k)},w^{(k)}) $ to
	$(\bm{u}_{J}^{(k+1)},w^{(k+1)})$ is
	\begin{equation}\label{eq:A2}
		\begin{aligned}
			&  L_{\rho^{(k)}}(\bm{u}_1^{(k+1)},\dots,\bm{u}_{J-1}^{(k+1)}, \bm{u}_J^{(k)}, w^{(k)}) - L_{\rho^{(k)}}(\bm{u}_1^{(k+1)},\dots,\bm{u}_{J}^{(k+1)},w^{(k+1)}) \\
			 =&f_J(\bm{u}_J^{(k)} )-f_J(\bm{u}_J^{(k+1)})-\langle \nabla f_J(\bm{u}_J^{(k+1)}),\bm{u}_J^{(k)}-\bm{u}_J^{(k+1)}\rangle \\
			&+  \frac{\rho ^{(k)} }{2} \| A_J \bm{u}_J^{(k+1)} -A_J \bm{u}_J^{(k)} \|^2
			-\frac{1}{\rho^{(k)}} \| w^{(k+1)} - w^{(k)} \|^2.
		\end{aligned}
	\end{equation}
	%A3
Finally, the difference due to the updating from
	$\rho^{(k)}$ to $\rho^{(k+1)}$ is
	\begin{equation}\label{eq:A3}
		\begin{aligned}
			& L_{\rho^{(k)}}(\bm{u}_1^{(k+1)},\dots,\bm{u}_{J}^{(k+1)},w^{(k+1)}) - L_{\rho^{(k+1)}}(\bm{u}_1^{(k+1)},\dots,\bm{u}_{J}^{(k+1)},w^{(k+1)})  \\
			 =&\frac{1- \beta}{2} \rho^{(k)} \| \sum_{j=1}^{J} A_j \bm{u}_j^{(k+1)} -\bm{c} \|^2   \\
			=&\frac{1- \beta}{2 \rho^{(k)} } \| w^{(k+1)} - w^{(k)} \|^2.
		\end{aligned}
	\end{equation}
%Note that Eq.~\eqref{eq:A1} and Eq.~\eqref{eq:A2} follow the steps in Ref.~\cite{Yin}, and Eq.~\eqref{eq:A3} is only applied to our modified ADMM, see Algorithm~\ref{alg:ADMM}.

We would like to remark that under different problem assumptions with constant  penalty parameter, the updates of the augmented Lagrangian function after each iteration have been considered in the convergence proof in Ref.~\cite{Yin} and they were able to show that the augmented Lagrangian function is monotonically decreasing during the ADMM iteration process.
	
	%Add
Summing up Eqs.~\eqref{eq:A1}, \eqref{eq:A2} and \eqref{eq:A3},  
under the assumption of boundedness of multipliers
and Lemma~\ref{le:f}, we have
	%A4
	\begin{equation}\label{eq:A4}
		\begin{aligned}
			& L_{\rho^{(k)}}(\bm{u}_1^{(k)},\dots,\bm{u}_{J}^{(k)},w^{(k)}) - L_{\rho^{(k+1)}}(\bm{u}_1^{(k+1)},\dots,\bm{u}_{J}^{(k+1)},w^{(k+1)}) \\
			=&\sum_{j=1}^{J} \left[f_j(\bm{u}_j^{(k)} )-f_j(\bm{u}_j^{(k+1)})-\langle \nabla f_j(\bm{u}_j^{(k+1)}),\bm{u}_j^{(k)}-\bm{u}_j^{(k+1)} \rangle \right]  \\
			&+ \sum_{j=1}^{J} \frac{\rho^{(k)}}{2} \| A_j \bm{u}_j^{(k+1)} -A_j \bm{u}_j^{(k)} \|^2
			-\frac{\beta + 1}{2}\frac{1}{\rho^{(k)}} \| w^{(k+1)} - w^{(k)} \|^2 \\
			 \geq& \left(\frac{\rho^{(k)}}{2} b^2 - C\right) \sum_{j=1}^{J}
			\| \bm{u}_j^{(k+1)} - \bm{u}_j^{(k)}\|^2- \frac{1}{\rho^{(k)}} ( \frac{\beta + 1}{2}  4M^2 )  \\
			=& \left(\frac{\rho^{(k)}}{2} b^2 - C\right)  \| \bm{u}^{(k+1)} - \bm{u}^{(k)} \|^2 -\frac{\delta}{\rho^{(k)}},
		\end{aligned}
	\end{equation}
where $\rho^{(k)}$ is large enough.
	
For simplicity, let $\rho^{(k)}$ be large enough such that
$ \frac{\rho^{(k)}}{2} b^2 - C \geq  1$, and denote the constant $ \delta := 2(\beta + 1) M^2. $
Then Eq.~\eqref{eq:A4} becomes
	\begin{equation}\label{eq: add A1-4}
		\begin{aligned}
			& L_{\rho^{(k)}}(\bm{u}_1^{(k)},\dots,\bm{u}_{J}^{(k)},w^{(k)}) - L_{\rho^{(k+1)}}(\bm{u}_1^{(k+1)},\dots,\bm{u}_{J}^{(k+1)},w^{(k+1)}) \\
			\geq& \sum_{j=1}^{J} \| \bm{u}_j^{(k+1)} - \bm{u}_j^{(k)} \|^2  -\frac{\delta}{\rho^{(k)}} \\
			=& \| \bm{u}^{(k+1)} - \bm{u}^{(k)} \|^2 -\frac{\delta}{\rho^{(k)}} .
		\end{aligned}
	\end{equation}

	%%%%%%%%%%%%%%%
For some $N_1 >0 $ such that $\rho^{(N_1)}$ is large enough and Eq.~\eqref{eq: add A1-4} holds for $k=N_1$.
Then Eq.~\eqref{eq: add A1-4} should also hold for $k \geq N_1$, since $\rho^{(k)}=\beta^k \rho^{(0)}$ with $\beta>1$.
Summing up Eq.~\eqref{eq: add A1-4} for the superscript $k$ from $N_1$ to $N$ with $ N> N_1, $ we obtain that
	\begin{equation}\label{eq:0-N}
		\begin{aligned}
			& L_{\rho^{(N_1)}}(\bm{u}_1^{(N_1)},\dots,\bm{u}_{J}^{(N_1)},w^{(N_1)}) - L_{\rho^{(N+1)}}(\bm{u}_1^{(N+1)},\dots,\bm{u}_{J}^{(N+1)},w^{(N+1)}) \\
			 = &\sum_{k=N_1}^{N} \left[ L_{\rho^{(k)}}(\bm{u}_1^{(k)},\dots,\bm{u}_{J}^{(k)},w^{(k)}) - L_{\rho^{(k+1)}}(\bm{u}_1^{(k+1)},\dots,\bm{u}_{J}^{(k+1)},w^{(k+1)}) \right]  \\
			 \geq& \sum_{k=N_1}^{N} \| \bm{u}^{(k+1)} - \bm{u}^{(k)} \|^2 - \sum_{k=N_1}^{N}  \frac{\delta}{\rho^{(k)}}.
		\end{aligned}
	\end{equation}
	%%%
Thus $\sum_{k=N_1}^{N}  \| \bm{u}^{(k+1)} - \bm{u}^{(k)} \|^2 $ is bounded (increasing, hence convergent), because $L_{\rho }(\bm{u}_1,\dots,\bm{u}_{J},w )$ has lower bound and
	\[ \sum_{k=N_1}^{N}  \frac{\delta}{\rho^{(k)} }
	\leq  \frac{\delta}{\rho^{(N_1)} } \frac{1}{1-\frac{1}{\beta}}\] with $ \rho^{(N_1)} = \rho^{(0)} \beta^{N_1}$.

	%%%%%%%%%%%%%%%%%%%%%%%%%%%%%%%%%%%%%%%%%%%%%%
	
Therefore, the series $\sum_{k=0}^{\infty}  \| \bm{u}^{(k+1)} - \bm{u}^{(k)} \|^2 $ is convergent.
%Pointwise convergent series $\sum_{k=0}^{\infty}  ( u_i^{(k+1)} - u_i^{(k)} )^2$, so convergent pointwise.
%%	
Then $\left\{\bm{u}^{(k)}  \right\}$ is a convergent sequence,
%$$ \lim_{k\longrightarrow \infty} || \bm{u}^{(k+1)} - \bm{u}^{(k)} ||_2=0 . $$
%Hence ADMM converges,
denote the limit by
	\begin{equation}
		\bm{u}^{\star}
		=(\bm{u}_1^{\star},\bm{u}_2^{\star},\dots,\bm{u}_{J}^{\star}) =
		\lim_{k \rightarrow \infty} (\bm{u}_1^{(k)},\bm{u}_2^{(k)},\dots,\bm{u}_{J}^{(k)}).
	\end{equation}
\hfill\qed
\end{proof}

%%%%%%%%%%%%%%%%%%%%%%%%%%%%%%%%%%%%%%%%%%%%%%%%%%%
\hspace{1pt}
\begin{proof}[Theorem~\ref{theorem} Part II]
First, we prove that the limit of ADMM $\bm{u}^{\star}$ is feasible.
From the update of the Lagrangian multiplier $w$ in Algorithm~\ref{alg:ADMM}:
	\begin{equation}
		w^{(k+1)} = w^{(k)} + \rho^{(k)} (\sum_{j=1}^{J} A_j \bm{u}_j^{(k+1)} -\bm{c}),
	\end{equation}
we obtain that for any positive integer $K$,
	\begin{equation}
		w ^{(K)} = w^{(0)} + \sum_{k=0}^{K} \rho^{(k)} (\sum_{j=1}^{J} A_j \bm{u}_j^{(k+1)} -\bm{c}) .
	\end{equation}
Since $\rho ^{(k)}$ is increasing and 
under the assumption 
that $w^{(K)}$ is bounded, we have
	\begin{equation*}
		\lim_{k\rightarrow \infty} \sum_{j=1}^{J} A_j \bm{u}_j^{(k)} -\bm{c}  = \bm{0},
	\end{equation*}
which implies
	\begin{equation}
		\sum_{j=1}^{J} A_j \bm{u}_j^{\star} -\bm{c} = \bm{0}.
	\end{equation}
Hence $\bm{u}^{\star}$ satisfies the linear constraint in Eq.~\eqref{eq:cp}, thus it is feasible,
and $ \left\{ w^{(k)} \right\} $ converges. Denote
$ w^{\star} =\lim_{k\rightarrow \infty} w^{(k)} $.
For feasible points,
the augmented Lagrangian function  has the same value when the penalty parameter $\rho$ takes difference values.

Moreover,  we have that $(\bm{u}^{\star}, w^{\star})$ satisfies
	\begin{equation}
		\begin{cases}
			\bm{u}_j^{\star} = \arg\min_{\bm{u}_j} L_{\rho^{(\infty)}}(\bm{u}_{i<j}^{\star},\bm{u}_j,\bm{u}_{i>j}^{\star},w^{\star} ), \ \ j=1,\dots,J,\\
			\sum_{j=1}^{J}  A_j \bm{u}_j^{\star} -\bm{c} =\bm{0}.
		\end{cases}
	\end{equation}
Using Proposition~\ref{pro:strictL}, we have
	\begin{equation}
		\begin{cases}
			\nabla_{\bm{u}_j} L_{\rho} (\bm{u}^{\star}, w^{\star}) 	=\bm{0}, j=1,\dots,J,\\
			\nabla_{w} L_{\rho} (\bm{u}^{\star}, w^{\star})  =\sum_{j=1}^{J} A_j \bm{u}_j^{\star} -\bm{c} =\bm{0}.
		\end{cases}
	\end{equation}
These results mean that the limit $(\bm{u}^{\star}, w^{\star})  $ is a stationary point of $L_{\rho}(\bm{u}, w)$.
\hfill\qed
\end{proof}

\subsection{Discussion}
\paragraph{Assumptions in other convergence proofs of ADMM for nonconvex problems.}
%1. full column rank:
%CONVERGENCE ANALYSIS OF ALTERNATING DIRECTION METHOD OF MULTIPLIERS FOR A FAMILY OF NONCONVEX PROBLEMS, Algorithm 4
In Ref.~\cite{Yin}, the following optimization problem
\begin{equation}
	\begin{aligned}
		\min_{x_0,x_1,\dots,x_p,y} & \phi(x_0,x_1,\dots,x_p,y)  \\
		\text{ s.t. } & A_0 x_0+ A_1 x_1+\dots+ A_p x_p +By =b,
	\end{aligned}
\end{equation}
was considered, and
the assumption that $\img(A) \subseteq \img(B)$ was used to prove the convergence of ADMM, where $A = (A_0, A_1, \dots, A_p)$ is the coefficient matrix of variables $x_0, x_1,\dots, x_p$ except the last variable $y$,
$B$ is the coefficient matrix of the last variable $y$,  and $\img(\cdot)$ is the image of a matrix.
In Ref.~\cite{Hong}, convergence of the ADMM for solving the following nonconvex consensus and sharing problems was analyzed:
\begin{equation}
	\begin{aligned}
		\min & \sum_{k=1}^K g_k(\bm{x}_k) + l(\bm{x}_0)  \\
		\text{ s.t. }& \sum_{k=1}^K A_k\bm{x}_k=\bm{x}_0, \bm{x}_k \in X_k, k=1, \dots, K,
	\end{aligned}
\end{equation}
where $\bm{x}_k \in \mathbb{R}^{N_k}, A_k \in \mathbb{R}^{M\times N_k},\bm{x}_0 \in \mathbb{R}^{M}$.
in which the assumption that $A_k$ is of full column rank is required to make the minimization problem of $\bm{x}_k$ strongly convex.
It was pointed out in Ref.~\cite{Yin} that their assumptions  cover the setting in Ref.~\cite{Hong} since $\img(A_1, \dots, A_K) \subset \img(I_M)$.
Constant penalty parameter was used in these ADMM convergence proofs.
Ref.~\cite{yashtini2021multi}  assumed $\text{img}(A) \subseteq \text{img}(B)$ where $A,B$ are coefficient matrices of block variable $x,y$.
On the other hand, our minimization problem in Eq.~\eqref{eq:cp} does not satisfy the assumptions in the above ADMM convergence proofs. In our minimization problem,
we have
$\img(\left[A_1,\dots, A_{J-1}\right]) \not\subseteq \img(A_J)$,
and $A_j \in \mathbb{R}^{6\times 2}, j=1,\dots, J$, are of full column rank, and especially, each $A_j$ is semi-orthogonal up to a constant; see Eq.~\eqref{eqn:semi-orthogonal}.
Ref.~\cite{chen2025convergence} focused on the convergence of ADMM for solving linearly constrained optimization problems whose objective function is the sum of one weakly convex and two strongly convex functions;
Ref.~\cite{yang2019inexact} considered the optimization problem with the objective function $h(\bm{x}) = f(\bm{x}_1, \dots, \bm{x}_K) + \sum_{k=1}^{K} g_k(\bm{x}_k)$, where $f$ is smooth but not convex necessarily, $g_k$ is convex, 
while our objective function is nonconvex.
In Ref.~\cite{hien2024multiblock}, the analysis requires $\lambda_{\min}(BB^{\star})>0$, where $B$ is a linear map. The assumption does not hold in our setting since the constraint matrices $A_j$ are not jointly full rank.
In Ref.~\cite{wang2018convergence},
one of assumptions is that there is $\sigma>0$ such that $\sigma \|x\|^2 \leq \|C^T x\|^2, \forall x \in \mathbb{R}^m$,
where $ C\in \mathbb{R}^{m \times n_3}$ is the coefficient matrix of the last (third) variable block in the optimization problem. 
Correspondingly, in our problem, the coefficient matrix of the last variable block is $ A_J\in \mathbb{R}^{6 \times 2}$,$ A_J^T \in \mathbb{R}^{2 \times 6}$.
The condition that $\sigma \|x\|^2 \leq \|A_J^T x\|^2, \forall x \in \mathbb{R}^6$ does not hold.

As a result, our problem lies outside the scope of standard ADMM convergence theory and motivates the new analysis presented in this work.

\paragraph{Increasing penalty parameter.}
It was discussed in  Ref.~\cite{Distributed} that using different penalty parameters $\rho^{(k)}$
for each iteration of ADMM may improve the convergence in practice,
and may make performance less dependent on the initial choice
of the penalty parameter.
%2. References of increasing penalty:
%ref1: A L1-constrained DNN Acceleration Algorithm with ADMM
%To avoid local minima, B-ADMM solves each sub-step separately and achieves global optimal solution.
%ref2: An alternating direction method with increasing penalty for stable principal component pursuit
%increasing penalty parameter but convex non-smooth objective
It was proposed in Ref.~\cite{Aybat} to use an ADMM algorithm with an increasing sequence of penalties to a solve a nonsmooth but convex optimization problem.

We would like to remark  that the divergent example of the multi-block ADMM constructed in Ref.~\cite{Caihua} can be made convergent if it is solved by the modified ADMM with an increasing penalty parameter.
%The Direct Extension of ADMM for Multi-block Convex Minimization Problems is Not Necessarily Convergent
Consider the divergent example in Ref.~\cite{Caihua}:
the objective function is null, and the constraint is a linear homogeneous equation with three variables:
\begin{equation}\label{eq:constraint}
	A_1 x_1+A_2 x_2 +A_3 x_3 =0,
\end{equation}
where $A_i \in \mathbb{R}^3, i=1,2,3,$ are column vectors, and the matrix $A =(A_1, A_2, A_3) $
is nonsingular.
The unique solution of Eq.~\eqref{eq:constraint} is $x_1=x_2=x_3=0$.
The corresponding augmented Lagrangian function is
\begin{equation}\label{eq:Lag}
	L_{\rho }(x_1, x_2, x_3, w)
	= w^{\T} (A_1 x_1+A_2 x_2 +A_3 x_3) + \frac{\rho }{2} \|A_1 x_1+A_2 x_2 +A_3 x_3 \|^2.
\end{equation}
If we apply our modified ADMM, analogous to Algorithm~\ref{alg:ADMM}, to solve the minimization problem, we have the following iteration formulation
\begin{equation}\label{eq:ite}
	\begin{cases}
		&	A_1^\T w^{(k)} +\rho^{(k)}  A_1^\T(A_1 x_1^{(k+1)} +A_2 x_2^{(k)}  +A_3 x_3^{(k)} ) =0, \\
		&	A_2^\T w^{(k)} +\rho^{(k)}  A_2^\T(A_1 x_1^{(k+1)} +A_2 x_2^{(k+1)}  +A_3 x_3^{(k)} ) =0, \\
		&	A_3^\T w^{(k)} +\rho^{(k)}  A_3^\T(A_1 x_1^{(k+1)} +A_2 x_2^{(k+1)}  +A_3 x_3^{(k+1)} ) =0, \\
		&	w^{(k+1)} -w^{(k)} +\rho^{(k)} (A_1 x_1^{(k+1)} +A_2 x_2^{(k+1)}  +A_3 x_3^{(k+1)} ) =0,\\
		&	\rho^{(k+1)} = \beta \rho^{(k)}, \beta >1.
	\end{cases}
\end{equation}
Reformulate the iteration in Eq.~\eqref{eq:ite} as
\begin{equation}
	x_1^{(k+1)} = \frac{1}{A_1^\T A_1}(-A_1^\T A_2 x_2^{(k)} -A_1^\T A_3 x_3^{(k)} + A_1^\T w^{(k)}/\rho^{(k)}),
\end{equation}
and
\begin{equation}
	\left(\begin{array}{c}
		x_2^{(k+1)}\\
		x_3^{(k+1)}\\
		w^{(k+1)}
	\end{array}\right)
	=M \left(\begin{array}{c}
		x_2^{(k)}\\
		x_3^{(k)}\\
		w^{(k)}
	\end{array}\right)
	= \dots
	=M^{k+1}
	\left(\begin{array}{c}
		x_2^{(0)}\\
		x_3^{(0)}\\
		w^{(0)}
	\end{array}\right) ,
\end{equation}
where $$ M=L^{-1}R, $$
\begin{equation}
	L= \left(\begin{array}{ccc}
		A_2^\T A_2 & 0 & 0_{1\times 3} \\
		A_3^\T A_2 & A_3^\T A_3 & 0_{1\times 3} \\
		A_2 & A_3 & \beta I_{3\times 3}
	\end{array}\right),
\end{equation}
\begin{equation}
	R=\left(\begin{array}{ccc}
		0 & -A_2^\T A_3 & A_2^\T\\
		0 & 0 & A_3^\T \\
		0_{3\times 1} & 0_{3\times 1} & I_{3\times 3}
	\end{array}\right)
	-\frac{1}{A_1^\T A_1}
	\left(\begin{array}{c}
		A_2^\T A_1\\
		A_3^\T A_1 \\
		A_1
	\end{array}\right)
	(-A_1^\T A_2, -A_1^\T A_3, A_1^\T).
\end{equation}

In Ref.~\cite{Caihua},  the matrix $A$ is constructed as
\begin{equation}
	A =(A_1, A_2, A_3)=
	\left( \begin{array}{ccc}
		1 & 1 & 1\\
		1 & 1 & 2\\
		1 & 2 & 2
	\end{array}
	\right),
\end{equation}
such that the spectral radius $\sigma(M)>1$ and then the ADMM interation is divergent.
However, with an increasing penalty parameter,
we have, for the same linear constraint,
\begin{equation}
	L= \left(\begin{array}{ccccc}
		6 & 0 & 0 & 0 & 0 \\
		7 & 9 & 0 & 0 & 0 \\
		1 & 1 & \beta & 0 & 0\\
		1 & 2 & 0 & \beta & 0\\
		2 & 2 & 0 & 0 & \beta
	\end{array}\right),
	L^{-1}= \frac{1}{54}
	\left(\begin{array}{ccccc}
		9 & 0 & 0 & 0 & 0 \\
		-7 & 6 & 0 & 0 & 0 \\
		-\frac{2}{\beta} & -\frac{6}{\beta} & \frac{54}{\beta} & 0 & 0\\
		\frac{5}{\beta} & -\frac{12}{\beta} & 0 & \frac{54}{\beta} & 0\\
		-\frac{4}{\beta} & -\frac{12}{\beta} & 0 & 0 & \frac{54}{\beta}
	\end{array}\right),
\end{equation}
\begin{equation}
	R= \frac{1}{3}\left(\begin{array}{ccccc}
		16 & -1 & -1 & -1 & 2 \\
		20 & 25 & -2 & 1 & 1 \\
		4 & 5 & 2 & -1 & -1\\
		4 & 5 & -1 & 2 & -1\\
		4 & 5 & -1 & -1 & 2
	\end{array}\right),
\end{equation} and
\begin{equation}
	M=L^{-1}R
	= \frac{1}{162}\left(\begin{array}{ccccc}
		144 & -9 & -9 & -9 & 18 \\
		8 & 157 & -5 & 13 & -8 \\
		\frac{64}{\beta} & \frac{122}{\beta} & \frac{122}{\beta} & -\frac{58}{\beta} & -\frac{64}{\beta} \\
		\frac{56}{\beta} & -\frac{35}{\beta} & -\frac{35}{\beta} & \frac{91}{\beta} & -\frac{56}{\beta} \\
		-\frac{88}{\beta} & -\frac{26}{\beta} & -\frac{26}{\beta} & -\frac{62}{\beta} & \frac{88}{\beta}
	\end{array}\right).
\end{equation}
%%%%
When $\beta=1, \sigma(M)>1$, this case correspond to the divergent example in the paper \cite{Caihua}, which is the direct extension of multi-block ADMM.
On the other hand, when $\beta>1$ such that $ \sigma(M)<1$, the iteration Eq.~\eqref{eq:ite} generates convergent sequence $\left\{ (x_1^{(k)}, x_2^{(k)}, x_3^{(k)}, w^{(k)})\right\}$.
The sequence $\left\{ (x_1^{(k)}, x_2^{(k)}, x_3^{(k)})\right\}$
will converge to the unique solution $\bm{0}$,
since
\begin{equation}
    \sigma(M)<1 \Longrightarrow \lim_{k\rightarrow \infty} M^{k} = \bm{0},
\end{equation}
and then
\begin{equation}
	\lim_{k\rightarrow \infty}
	\left(\begin{array}{c}
		x_2^{(k)}\\
		x_3^{(k)}\\
		w^{(k)}
	\end{array}\right)
	= \lim_{k\rightarrow \infty}
	M^{k}
	\left(\begin{array}{c}
		x_2^{(0)}\\
		x_3^{(0)}\\
		w^{(0)}
	\end{array}\right) =\bm{0},
\end{equation}
\begin{equation}
	\lim_{k\rightarrow \infty} x_1^{(k+1)}
	= \lim_{k\rightarrow \infty}
	\left[ -\frac{4}{3}x_2^{(k)} -\frac{5}{3}x_3^{(k)}+ \frac{w_1^{(k)}+w_2^{(k)}+w_3^{(k)}}{3 \rho^{(k)}}
	\right]
	=0.
\end{equation}
For example, let $\beta = 1.1$,  it can be calculated that $\sigma(M)=|0.9398 + 0.2812i|=0.9809<1$, while when $\beta = 1$,  we have $\sigma(M)=|0.9836 + 0.2984i| =1.0278>1$.

\paragraph{Parallelization and scalability.}
Although Algorithm 1 is presented using sequential (Gauss-Seidel-type) block updates, the method is naturally parallelizable. 
In a Jacobi-type implementation \cite{cohen2025alternating}, all blocks $\bm{u}_j^{(k+1)}$ 
can be updated simultaneously using the previous iterate 
$\bm{u}_j^{(k)}$ , followed by the multiplier and penalty updates:
\begin{equation}
\begin{aligned}
   \bm{u}_1^{(k+1)} := & \arg\min_{\bm{u}_1} L_{\rho^{(k)}}(\bm{u}_1,\bm{u}_2^{(k)}, \dots,\bm{u}_J^{(k)},w^{(k)}),  \\
   \bm{u}_j^{(k+1)} := & \arg\min_{\bm{u}_j} L_{\rho^{(k)}}  (\bm{u}_{i<j}^{(k)},\bm{u}_j,\bm{u}_{i>j}^{(k)},w^{(k)}),  \quad j=2,\dots,J-1, \\
	\bm{u}_J^{(k+1)} := & \arg\min_{\bm{u}_J} L_{\rho^{(k)}}(\bm{u}_1^{(k)},\dots,\bm{u}_{J-1}^{(k)}, \bm{u}_J,w^{(k)}),   \\
	w^{(k+1)} := & w^{(k)} + \rho^{(k)} (\sum_{j=1}^J A_j \bm{u}_j^{(k+1)}-\bm{ c }),  \\
    \rho ^{(k+1)} := & \beta \rho^{(k)}, \\
    k:= & k+1.   
\end{aligned}
\end{equation}
The convergence analysis can be adapted to this parallel variant with minor modifications to the descent inequality of the augmented Lagrangian.
The difference in the augmented Lagrangian function becomes
	\begin{equation}\label{eq:P4}
		\begin{aligned}
			& L_{\rho^{(k)}}(\bm{u}_1^{(k)},\dots,\bm{u}_{J}^{(k)},w^{(k)}) - L_{\rho^{(k+1)}}(\bm{u}_1^{(k+1)},\dots,\bm{u}_{J}^{(k+1)},w^{(k+1)}) \\
			=&\sum_{j=1}^{J} \left[f_j(\bm{u}_j^{(k)} )-f_j(\bm{u}_j^{(k+1)})-\langle \nabla f_j(\bm{u}_j^{(k+1)}),\bm{u}_j^{(k)}-\bm{u}_j^{(k+1)} \rangle \right]  \\
			&+ 
            \sum_{j=1}^{J}  \rho^{(k)}  \| A_j \bm{u}_j^{(k+1)} -A_j \bm{u}_j^{(k)} \|^2
            -  \frac{\rho^{(k)}}{2} \|\sum_{j=1}^{J}   (A_j\bm{u}_j^{(k+1)} -  A_j\bm{u}_j^{(k)}) \|^2  \\
            &
			-\frac{\beta + 1}{2}\frac{1}{\rho^{(k)}} \| w^{(k+1)} - w^{(k)} \|^2 \\
			  \geq 
            &\sum_{j=1}^{J} \left[f_j(\bm{u}_j^{(k)} )-f_j(\bm{u}_j^{(k+1)})-\langle \nabla f_j(\bm{u}_j^{(k+1)}),\bm{u}_j^{(k)}-\bm{u}_j^{(k+1)} \rangle \right]  \\
			&+ \sum_{j=1}^{J} \frac{\rho^{(k)}}{2} \| A_j \bm{u}_j^{(k+1)} -A_j \bm{u}_j^{(k)} \|^2
			-\frac{\beta + 1}{2}\frac{1}{\rho^{(k)}} \| w^{(k+1)} - w^{(k)} \|^2 .
		\end{aligned}
	\end{equation} 

Sequential updates typically require fewer outer iterations because they exploit the most recent block information. In contrast, Jacobi-type updates may require more outer iterations due to the use of stale information, but they enable parallel computation across blocks and can reduce wall-clock time when parallel resources are available. 
Therefore, while the present implementation uses sequential updates for simplicity and efficiency at moderate values of $J$, the proposed framework scales naturally to larger $J$ via parallel block updates.

\section{Uniqueness of the stationary point}
\label{sec:unique}

In this section, by introducing  sufficient conditions for quasi-convex functions, we study the uniqueness of the stationary point of our constrained minimization problem in Eq.~\eqref{eq:cp}.

\subsection{Quasi-convex function and unique stationary point}
\label{sec: unique stationary point}
%matrix theoretic criteria
%(Boyd and Vandenberghe, Convex Optimization 3.4)
\begin{definition}[Quasi-convex function \cite{boyd2004convex}]
A function $h: \mathbb{R}^n \rightarrow \mathbb{R}$ is called quasi-convex (or unimodal) if its domain $\dom (h)$ and all
its sublevel sets
\begin{equation}
     	S_{\alpha} = \left\{\bm{x} \in \dom(h) \bigg\vert h(\bm{x}) \leq \alpha\right\},
\end{equation}
for $\alpha \in \mathbb{R}$, are convex.
Alternatively, a function $h$ is quasi-convex if and only if its domain $\dom (h)$
is convex, and for any $\bm{x},\bm{y} \in \dom (h)$ and
$ \lambda \in [0,1]$, we have
\begin{equation}
    h(\lambda \bm{x}+(1-\lambda )\bm{y}) \leq \max \left \{h(\bm{x}),h(\bm{y})  \right\}.
\end{equation}
\end{definition}

\begin{theorem}[Unique stationary point]%sufficient condition
\label{th:sufficient}
	Let $h: U \rightarrow \mathbb{R}$ be a twice differentiable function on an open convex set $U \subset \mathbb{R}^n$ such that for all $\bm{x} \in U$,

	(S1)
	$\|\nabla h(\bm{x}) \|^2 + p \lambda_{\min} >0$,
	 where $0<p \ll 1$ is a small positive parameter to relax the condition, and
  $\lambda_{\min}$
	is the smallest eigenvalue of the hessian matrix $\nabla^2 h(\bm{x})$;

	(S2)
	$\det(B_k ) \leq 0$, $k=1,2,\dots,n$, where
	$B_k $ is the $(k+1)$-th order leading principal submatrix of $B(\bm{x})$, where
	$$B(\bm{x}) = \left[\begin{array}{cc}
		0 &  \nabla h(\bm{x})^\T  \\
		\nabla h(\bm{x})  &  \nabla ^2 h(\bm{x})
	\end{array}\right]
	$$ is the bordered Hessian matrix when $ \nabla h(\bm{x}) \neq \bm{0}$,
	$B_1 = \left[\begin{array}{cc}
		0 &  h_1  \\
		h_1  &  h_{11}
	\end{array}\right],
	B_2 = \left[\begin{array}{ccc}
		0 &  h_1 & h_2  \\
		h1 & h_{11} & h_{12} \\
		h_2 & h_{21} & h_{22}
	\end{array}\right],
	\dots,
	B_n = B(\bm{x})$,
       and $ h(\bm{x}) =(h_1, \dots, h_n), \nabla^2 h(\bm{x}) = (h_{ij})$.

Then $h(\bm{x})$ is quasi-convex on $U$,
and $h(\bm{x})$ has at most one stationary point on $U$.
\end{theorem}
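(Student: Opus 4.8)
The plan is to prove the two conclusions in turn and then combine them. I would first establish that $h$ is quasi-convex on $U$, which is the content governed by condition (S2), and separately that every stationary point of $h$ is a strict local minimum, which follows from condition (S1). Uniqueness of the stationary point then drops out of a short geometric argument that plays the convexity of the sublevel sets against the strict minimality.

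For quasi-convexity, I would work from the second-order characterization of generalized convexity. For a $C^2$ function on a convex set, quasi-convexity is controlled by the sign of the quadratic form $\bm{v}^\T \nabla^2 h(\bm{x}) \bm{v}$ restricted to the tangent space of the level sets, namely the subspace $\{\bm{v} : \bm{v}^\T \nabla h(\bm{x}) = \bm{0}\}$. The bordered Hessian $B(\bm{x})$ is exactly the matrix whose leading principal minors read off the definiteness of $\nabla^2 h(\bm{x})$ on that subspace, so condition (S2), $\det(B_k) \leq 0$ for $k=1,\dots,n$, is precisely the assertion that $\nabla^2 h(\bm{x})$ is positive semidefinite on $\{\bm{v} : \bm{v}^\T \nabla h(\bm{x}) = \bm{0}\}$ at every point where $\nabla h(\bm{x}) \neq \bm{0}$. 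I would then invoke the standard bordered-Hessian sufficiency theorem for quasi-convexity, using the relaxation parameter $p$ in (S1) to supply the strict inequality that a bare $\leq 0$ test lacks. Since $U$ is open and convex, the domain requirement in the definition of quasi-convexity is met, and I conclude that every sublevel set $S_\alpha$ is convex, i.e.\ $h$ is quasi-convex on $U$.

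For the uniqueness claim, I would first note that at any stationary point $\bm{x}^\star \in U$ we have $\nabla h(\bm{x}^\star) = \bm{0}$, so (S1) collapses to $p\,\lambda_{\min} > 0$; since $p > 0$ this forces $\lambda_{\min} > 0$, hence $\nabla^2 h(\bm{x}^\star)$ is positive definite and $\bm{x}^\star$ is a strict local minimum. Now suppose, toward a contradiction, that $h$ had two distinct stationary points $\bm{x}_1 \neq \bm{x}_2$ in $U$; relabel so that $h(\bm{x}_1) \leq h(\bm{x}_2)$. Because $U$ is convex, the segment from $\bm{x}_2$ to $\bm{x}_1$ lies in $U$, and quasi-convexity gives $h(\bm{x}_\lambda) \leq \max\{h(\bm{x}_1), h(\bm{x}_2)\} = h(\bm{x}_2)$ for every $\bm{x}_\lambda = \lambda \bm{x}_2 + (1-\lambda)\bm{x}_1$, $\lambda \in [0,1]$. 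But $\bm{x}_2$ being a strict local minimum means $h(\bm{x}) > h(\bm{x}_2)$ for all $\bm{x} \neq \bm{x}_2$ near $\bm{x}_2$; taking $\lambda$ close to $1$ yields points $\bm{x}_\lambda \neq \bm{x}_2$ in that neighborhood with $h(\bm{x}_\lambda) > h(\bm{x}_2)$, contradicting the bound just obtained. Hence $h$ has at most one stationary point on $U$.

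The hard part will be the quasi-convexity step. The delicate point is that the positive semidefiniteness encoded by $\det(B_k) \leq 0$ is only the \emph{necessary} second-order condition for quasi-convexity and is not by itself sufficient: a degenerate direction with zero curvature inside the level-set tangent space can destroy quasi-convexity. Making the deduction rigorous requires the strict form of the bordered-Hessian test, and this is exactly why the relaxation parameter $p$ and the coupled inequality (S1) are introduced; the careful justification that the relaxed pair (S1)--(S2) still yields genuine quasi-convexity, rather than merely the necessary condition, is where the real work lies. By comparison, the strict-local-minimum reduction and the segment argument for uniqueness are elementary once quasi-convexity is secured.
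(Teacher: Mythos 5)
Your proposal is correct and takes essentially the same route as the paper: you read (S2), via the bordered-Hessian equivalence, as positive semidefiniteness of $\nabla^2 h$ on the level-set tangent space, use (S1) to force a positive definite Hessian at any stationary point, invoke the classical sufficiency theorem that these two conditions together imply quasi-convexity (the paper cites exactly this pair of results, Theorems~\ref{th:sq} and~\ref{th:equi}), and then derive uniqueness by playing quasi-convexity along the segment joining two stationary points against their strict local minimality. Your segment argument is in fact a slightly more careful rendering of the paper's, which asserts constancy of $h$ on the segment rather than arguing directly from the strict local minimum at the endpoint of larger value.
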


\begin{proof}
If $h(\bm{x})$ satisfies (S1),
when $\nabla h(\bm{x})=\mathbf 0$,
we have $\lambda_{\min}(\nabla^2 h(\bm{x})) >0$,
which implies $  \nabla^2 h(\bm{x})$ is positive definite.
(S1) is a sufficient condition for the condition (H1) in Theorem~\ref{th:sq} from Ref.~\cite{CJ}.
When $ \nabla h(\bm{x})\neq \bm{0}$,
letting $A=\nabla^2 h(\bm{x}), \bm{a}=\nabla h(\bm{x})$,
(S2) is equivalent to (H2) in Theorem~\ref{th:sq}
by Theorem~\ref{th:equi} \cite{CrouzeixF82}.
Therefore, $h(\bm{x})$ is quasi-convex on $U$ by Theorem~\ref{th:sq}.
	
Suppose that $\bm{x}_1, \bm{x}_2 \in U$ are two stationary points of  $h(\bm{x})$,
i.e., $\nabla h(\bm{x}_1) = \nabla h(\bm{x}_2)= \bm{0}$.
We have that $  \nabla^2 h(\bm{x}_1)$ and $\nabla^2 h(\bm{x}_2) $ are positive definite, and thus both $\bm{x}_1$ and $\bm{x}_2$ are local minima.
%%%%%%%%%%%%%%
Since $h(\bm{x})$ is quasi-convex on $U$,
any point on the line segment should have the same function value:
\begin{equation}
    h(\bm{x}_1)= h(\bm{x}_2)=h(\bm{z}),
\quad
\forall \bm{z} \in \left\{\bm{z} : \bm{z}=\lambda\bm{x}_1+(1-\lambda)\bm{x}_2, 0 \leq \lambda \leq 1 \right\};
\end{equation}
otherwise, the sublevel set cannot be convex.
However, the fact that $h(\bm{u}) $ is a constant on the line segment is in
contradiction with the conclusions that
$\nabla^2 h(\bm{x}_1)$ and $\nabla^2 h(\bm{x}_2)$ are positive definite
%$\nabla^2 h(\bm{x}_1) \succ 0, \nabla^2 h(\bm{x}_2) \succ 0$
obtained above based on (S1).
Hence $\bm{x}_1 = \bm{x}_2$, i.e., the stationary point of $h(\bm{x})$ within domain $U$ is unique if exists.
\hfill\qed
\end{proof}

\begin{remark}
Conditions (S1) and (S2) in Theorem \ref{th:sufficient} are sufficient conditions for quasi-convexity, which also guarantees the uniqueness of the stationary point. We introduce these conditions for our minimization problem based on the available sufficient conditions for quasi-convexity (Theorems A.1 and A.2 in Appendix A, from [11,10]). Note that it is not easy to check the conditions in the definition of quasi-convexity directly. Here we use condition (S1) in Theorem~\ref{th:sufficient} instead of condition (H1) in Theorem~\ref{th:sq}~\cite{CJ}, because (H1) requires positive definiteness of the Hessian matrix at the stationary point, which is unknown before solving our minimization problem.  
\end{remark}

\subsection{Uniqueness for the case of three Burgers vectors}\label{sec:4.2}

Based on the uniqueness of the stationary point of a quasi-convex function given in Theorem~\ref{th:sufficient} in the previous subsection, we establish the uniqueness of the solution of our constrained minimization problem in Eq.~\eqref{eq:cp} for the case of three Burgers vectors.

We consider the case of three Burgers vectors, i.e., $J=3$. An example is the twist grain boundary in fcc crystals \cite{table1,table2,ZHANG}.
The constrained minimization problem in Eq.~\eqref{eq:cp}  in this case becomes
\begin{equation}\label{eq:three}
	\begin{aligned}
		\min_{\bm{u}_1,\bm{u}_2,\bm{u}_3} &  \sum_{j =1}^{3} f_j( \bm{u}_j)  \\
		\text{s.t. }&
		A_1 \bm{u}_1 + A_2\bm{u}_2 + A_3 \bm{u}_3 =\bm{ c }.
	\end{aligned}
\end{equation}
%%%

Assume that  $\rank (\bm{b}^{(1)}, \bm{b}^{(2)}, \bm{b}^{(3)})=2$ and
$\rank \left[A_1, A_2, A_3\right]=\rank \left[A_1, A_2, A_3, \bm{c}\right] $ in Eq.~\eqref{eq:three}. In this case, the linear constraints  in Eq.~\eqref{eq:three} have infinitely many solutions, and
we can solve the linear constraints to express $\bm{u}_2, \bm{u}_3$ in terms of $\bm{u}_1$.
We reformulate the objective function in  Eq.~\eqref{eq:three}  as
\begin{equation}
 F(u_{1x},u_{1y}) :=  \sum_{j =1}^{3} f_j( \bm{u}_j(\bm{u}_1) ),
\end{equation}
where $\bm{u}_2(\cdot), \bm{u}_3(\cdot) $ are functions of $\bm{u}_1=(u_{1x}, u_{1y})$, determined by the linear constraints in Eq.~\eqref{eq:three} .
%%%
Equivalently, the constrained minimization in Eq.~\eqref{eq:three} is converted to an unconstrained minimization problem defined as
\begin{equation}\label{eq:threeun}
 \min_{u_{1x},u_{1y}} F(u_{1x},u_{1y}),
\end{equation}
where the objective function
$F $ has two variables $u_{1x},u_{1y}$.

We apply Theorem~\ref{th:sufficient} to the objective function in Eq.~\eqref{eq:threeun}
to obtain the uniqueness of the solution of the unconstrained minimization problem in Eq.~\eqref{eq:threeun}, which is equivalent to the constrained minimization problem in Eq.~\eqref{eq:three} for determining the grain boundary dislocation structure with $J=3$ Burgers vectors.

We first have the following theorem by applying Theorem~\ref{th:sufficient} to the objective function
 in the constrained minimination problem in Eq.~\eqref{eq:three} for the case of three Burgers vectors.

\begin{theorem}
[Uniqueness for the three Burgers vectors case]
\label{pro:sufficient J3}
In the constrained minimization problem Eq.~\eqref{eq:three}, assuming that
$\rank (\bm{b}^{(1)}, \bm{b}^{(2)}, \bm{b}^{(3)})=2$ and
$\rank \left[A_1, A_2, A_3\right]=\rank \left[A_1, A_2, A_3, \bm{c}\right] $,
define functions $\bm{u}_2(\cdot), \bm{u}_3(\cdot):\mathbb{R}^{2} \rightarrow \mathbb{R}^{2} $
satisfying the linear constraint
$A_1 \bm{u}_1 + A_2\bm{u}_2(\bm{u}_1) + A_3 \bm{u}_3(\bm{u}_1) =\bm{ c }$.
Let
\begin{equation}
    F(u_{1x},u_{1y}) :=  \sum_{j =1}^{3} f_j( \bm{u}_j(\bm{u}_1)),\label{eqn:u-1}
\end{equation}
where $\bm{u}_1=(u_{1x}, u_{1y})$ and
\begin{equation}
       f_j(\bm{u}_j) =      \left[1-\nu \frac{(\bm{u}_j \times \bm{n} \cdot \bm{b}^{(j)})^2 }{b^2 (\|\bm{u}_j \|^2+\epsilon)}\right]
       b\sqrt{\|\bm{u}_j \|^2 +\epsilon  }    \log \frac{1}{r_g \sqrt{\|\bm{u}_j\|^2 +\epsilon  }} . \label{eqn:u-2}
\end{equation}
Given an open convex set $U \subset \mathbb{R}^2$ and a positive constant $0<p \ll 1$, if
for all $\bm{u}_1 \in U $, we have

       (1) $\|\nabla F\|^2 + p \lambda_{\min}(\nabla^2 F) >0$, that is,
       $ F_1^2+F_2^2+ p \frac{F_{11}+F_{22}-\sqrt{(F_{11}-F_{22})^2+4F_{12}^2}}{2} >0$,
       which is the sufficient condition of the statement that
       $\nabla F( \bm{u}_1)= \bm{0}$
       implies that $  \nabla^2 F(\bm{u}_1)$ is positive definite;

       (2)
       $\left|\begin{array}{cc}
              0 &  F_1  \\
              F_1  &  F_{11}
       \end{array}\right| \leq 0,
       \left|\begin{array}{ccc}
              0 &  F_1  & F_2\\
              F_1  &  F_{11} & F_{12} \\
              F_2 & F_{21} & F_{22}
       \end{array}\right| \leq 0$,\\
       where
       $$(F_1, F_2) = \left(\frac{\partial F}{\partial u_{1x}}, \frac{\partial F}{\partial u_{1y}}\right),\ \
     \left( \begin{array}{cc}
          F_{11} & F_{12} \\
              F_{21} & F_{22}
       \end{array} \right)
       =\left( \begin{array}{cc}
          \frac{\partial^2 F}{ \partial^2 u_{1x}} & \frac{\partial^2 F}{\partial u_{1x} \partial u_{1y}} \\
              \frac{\partial^2 F}{\partial u_{1y} \partial u_{1x}} &  \frac{\partial^2 F}{\partial^2 u_{1y} }
       \end{array} \right),$$
then $F(u_{1x},u_{1y})$ has at most one stationary point on $U$.
Therefore, the solution of the constrained minimization problem Eq.~\eqref{eq:three} is unique over $U $ if it exists.
%The position of the subdomain $U_+$ depends on the specific situation.
\end{theorem}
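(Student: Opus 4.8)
The plan is to read this theorem as the specialization of Theorem~\ref{th:sufficient} to the reduced objective $F$ on the two-dimensional open convex domain $U$, and then to convert the conclusion ``at most one stationary point'' into uniqueness of the constrained minimizer.

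First I would make the reduction precise and check smoothness. Under $\rank(\bm{b}^{(1)},\bm{b}^{(2)},\bm{b}^{(3)})=2$ the matrix $[A_1,A_2,A_3]\in\mathbb{R}^{6\times6}$ has rank $2\cdot\rank(\bm{b}^{(1)},\bm{b}^{(2)},\bm{b}^{(3)})=4$ because of its block structure, and the compatibility hypothesis $\rank[A_1,A_2,A_3]=\rank[A_1,A_2,A_3,\bm{c}]$ makes the constraint consistent; hence its feasible set is a two-dimensional affine subspace. After relabeling so that $\bm{b}^{(2)},\bm{b}^{(3)}$ are linearly independent (possible since the rank is $2$), the projection of the feasible set onto the $\bm{u}_1$-coordinates is a bijection, which is exactly what produces the affine maps $\bm{u}_2(\bm{u}_1),\bm{u}_3(\bm{u}_1)$. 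Since $\sqrt{\|\bm{u}_j\|^2+\epsilon}\geq\sqrt{\epsilon}>0$, each $f_j$ is smooth, so $F=\sum_{j=1}^3 f_j(\bm{u}_j(\bm{u}_1))$ is twice continuously differentiable on $U$ as a sum of compositions of smooth functions with affine maps.

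Next I would verify that (S1) and (S2) of the present theorem coincide with (S1) and (S2) of Theorem~\ref{th:sufficient} for $n=2$. For a symmetric $2\times2$ Hessian with entries $F_{11},F_{12}=F_{21},F_{22}$, the smallest eigenvalue is $\lambda_{\min}=\tfrac12\bigl(F_{11}+F_{22}-\sqrt{(F_{11}-F_{22})^2+4F_{12}^2}\bigr)$ and $\|\nabla F\|^2=F_1^2+F_2^2$, so the displayed inequality in (S1) is literally condition (S1) of Theorem~\ref{th:sufficient}; likewise the two determinants in (S2) are exactly the order-$2$ and order-$3$ leading principal minors $B_1,B_2$ of the bordered Hessian $B(\bm{u}_1)$. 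With these identifications Theorem~\ref{th:sufficient} applies directly and yields that $F$ is quasi-convex on $U$ and has at most one stationary point there.

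Finally I would transfer the conclusion back to Eq.~\eqref{eq:three}. Because $U$ is open, any solution of the reduced problem Eq.~\eqref{eq:threeun} is an interior minimizer and hence satisfies $\nabla F=\bm{0}$, i.e.\ it is a stationary point of $F$; by the previous paragraph there is at most one such point. Through the bijection between the feasible set and the $\bm{u}_1$-plane, solutions of the constrained problem Eq.~\eqref{eq:three} restricted to $U$ correspond one-to-one with interior stationary points of $F$, so the constrained solution is unique over $U$ whenever it exists. I expect the main obstacle to be the bookkeeping of this reduction rather than any hard estimate: one must confirm that the projection onto $\bm{u}_1$ is genuinely a bijection (using that $\bm{b}^{(1)}$ lies in the span of the linearly independent pair $\bm{b}^{(2)},\bm{b}^{(3)}$, so that $\bm{c}-A_1\bm{u}_1$ always lies in the column space of $[A_2,A_3]$) and that constrained minimizers correspond precisely to interior critical points of the reduced $F$; once this is in place, matching (S1)/(S2) to Theorem~\ref{th:sufficient} is routine.
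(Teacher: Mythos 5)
Your proposal is correct and follows essentially the same route as the paper: identify (S1)/(S2) as the two-variable specialization of the conditions in Theorem~\ref{th:sufficient}, conclude that $F$ is quasi-convex with at most one stationary point on $U$, and note that any minimizer in the open set $U$ must be a stationary point. The only difference is that you additionally verify the well-posedness of the reduction (the feasible set is a two-dimensional affine subspace in bijection with the $\bm{u}_1$-plane, making $\bm{u}_2(\cdot),\bm{u}_3(\cdot)$ affine and $F$ twice continuously differentiable), bookkeeping the paper leaves implicit in the theorem's setup.
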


\begin{proof}
The conditions (1) and (2) are the corresponding sufficient conditions in Theorem~\ref{th:sufficient} in the case of two variables.
Using Theorem~\ref{th:sufficient},
We can show that $F(u_{1x},u_{1y})$ is quasi-convex and has at most one stationary point over some open convex domain $U \subset \mathbb{R}^2$.
Note that the minimum point must be a stationary point for a differentiable function.
Hence $F(u_{1x},u_{1y})$ has a unique minimum point over the given domain $U$ if it exists.
\qed
\end{proof}

\begin{remark}
Theorem~\ref{th:sufficient} is stated for a general $n$-dimensional function and is therefore dimension-independent. 
Theorem~\ref{pro:sufficient J3} provides a concrete illustration in the $J=3$ case. 
For the general $J$-block problem with $J>3$, consider the constrained minimization problem
\[
\min_{\bm{u}_1,\dots,\bm{u}_J} \ \sum_{j=1}^J f_j(\bm{u}_j)
\quad
\text{subject to}
\quad
\sum_{j=1}^J A_j \bm{u}_j = \bm{c}.
\]
Assume that the constraint satisfies 
\[
\rank[A_1,\dots,A_J] = \rank[A_1,\dots,A_J,\bm{c}],
\]
so that the feasible set is nonempty. 
After eliminating the linear constraints, the problem can be reformulated as an equivalent unconstrained problem
\[
F(\bm{x}) := \sum_{j=1}^J f_j(\bm{u}_j(\bm{x})), 
\quad
\bm{x} =(x_1,\dots, x_{2r}) \in U \subset \mathbb{R}^{2r},
\]
where $2r<2J$ is the number of free variables and $U$ is an open convex set.
Given a positive constant $0<p \ll 1$, if for all $\bm{x} \in U$ the function $F$ satisfies:
\begin{itemize}
\item[(1)] $\|\nabla F(\bm{x}) \|^2 + p\,\lambda_{\min}(\nabla^2 F(\bm{x})) > 0$,
\item[(2)] $\det(B_k(\bm{x})) \le 0$, $k=1,\dots,2r$, where
\[
B_k(\bm{x}) =
\begin{bmatrix}
0 & \frac{\partial F}{\partial x_1} 
& \dots &  \frac{\partial F}{\partial x_k}\\
\frac{\partial F}{\partial x_1}  & 
\frac{\partial^2 F}{\partial^2 x_1  }
&   \dots &
\frac{\partial^2 F}{\partial x_1 \partial x_k }\\
\vdots & \vdots & \ddots &  \vdots \\
\frac{\partial F}{\partial x_k}  & 
\frac{\partial^2 F}{\partial x_k \partial x_1 }
&  \dots
&  \frac{\partial^2 F}{\partial^2 x_k } \\
\end{bmatrix}
\in \mathbb{R}^{(k+1)\times (k+1)},
\]
is the $(k+1)$-th order leading principal submatrix of the bordered Hessian
\[
B(\bm{x}) =
\begin{bmatrix}
0 & \nabla F(\bm{x})^\T \\
\nabla F(\bm{x}) & \nabla^2 F(\bm{x})
\end{bmatrix}
\in \mathbb{R}^{(2r+1)\times (2r+1)},
\]
\end{itemize}
then $F$ is quasi-convex on $U$ and has at most one stationary point. 
Consequently, the original constrained minimization problem admits at most one solution in $U$ (if it exists).
\end{remark}

We provide an example in Proposition~\ref{pro:sufficient J3 spf} below.
Also see Figure~\ref{fig:fsc} for an example of the objective function in Proposition~\ref{pro:sufficient J3 spf}.
The quantities in conditions (S1) and (S2)  in Theorem~\ref{pro:sufficient J3} for this example are shown in Figure~\ref{fig: condition}.

\begin{figure}[hbtp]
	\centering
	\subfigure[$f_1$ surface plot]
				{\includegraphics[width=0.45\textwidth]{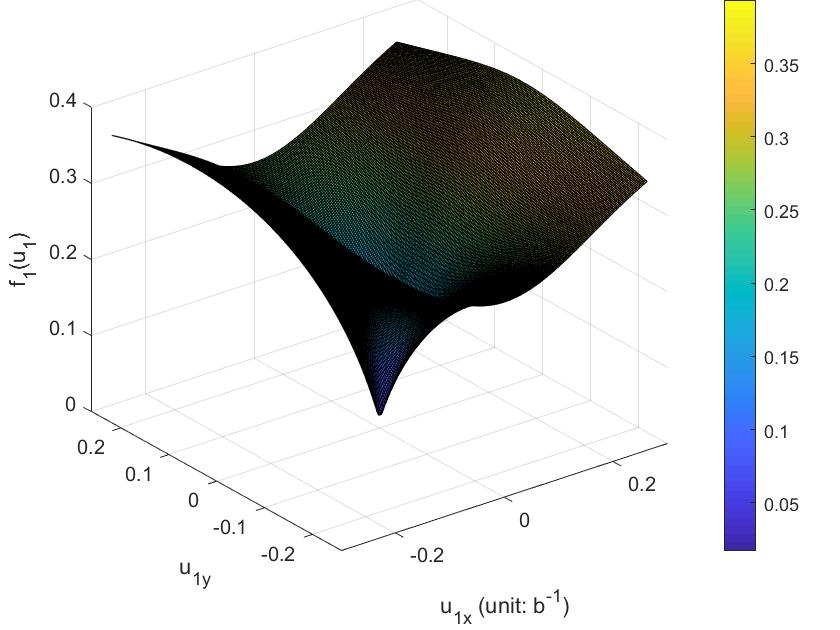}
		\label{fig: f1}
	}
%	\subfigure[$f_1$ contour plot]
%				{\includegraphics[width=0.45\textwidth]{figs/f1contour.jpg}
%		\label{fig: f1}
%	}
    \subfigure[$f_2$ surface plot]
    {
    	    	\includegraphics[width=0.45\textwidth]{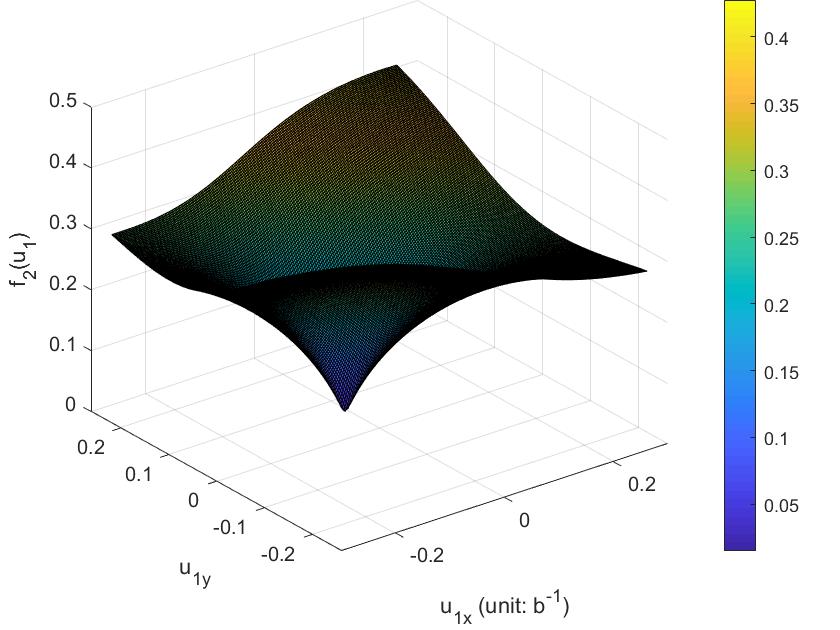}
    	}
%    \subfigure[$f_2$ contour plot]
%    {
%    	    	\includegraphics[width=0.45\textwidth]{figs/f2contour.jpg}
%    	}
    \subfigure[$f_3$ surface plot]{
    	    	\includegraphics[width=0.45\textwidth]{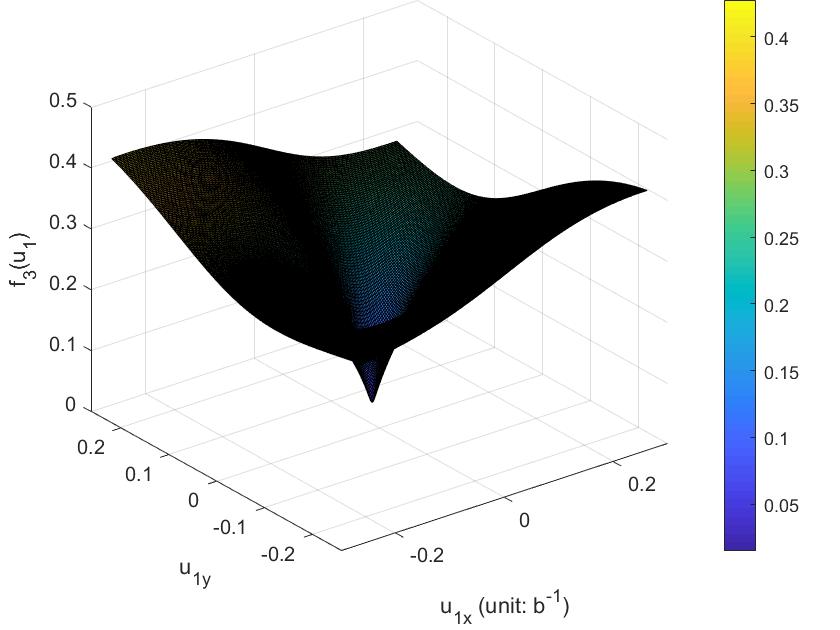}
     }
%    \subfigure[$f_3$ contour plot]{
%    	    	\includegraphics[width=0.45\textwidth]{figs/figs/f3contour.jpg}
%     }
    \subfigure[Contour plots of $f_1$ (yellow solid), $f_2$ (green dot), $f_3$ (blue dash)]{
    	    	\includegraphics[width=0.45\textwidth]{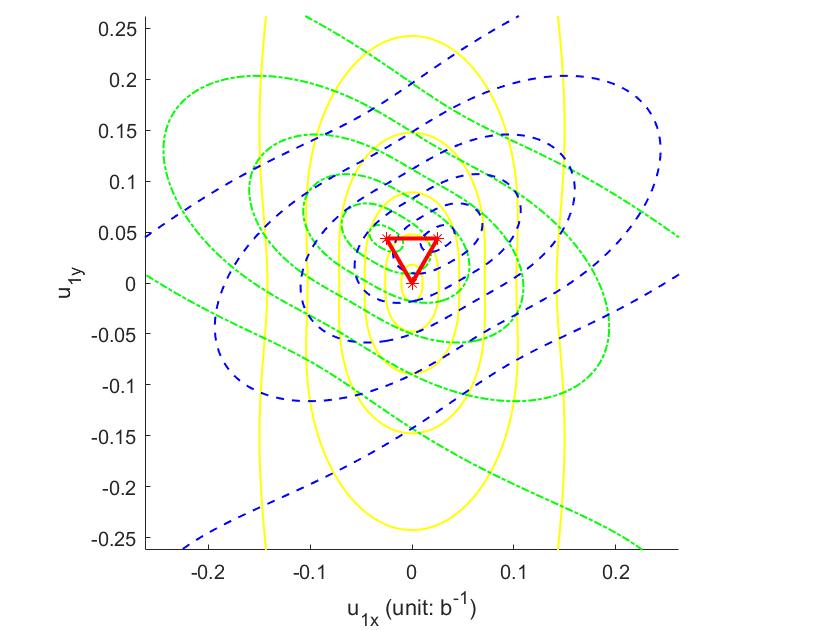}
     }
	\subfigure[$F=f_1+f_2+f_3$ surface plot]{
		\includegraphics[width=0.45\textwidth]{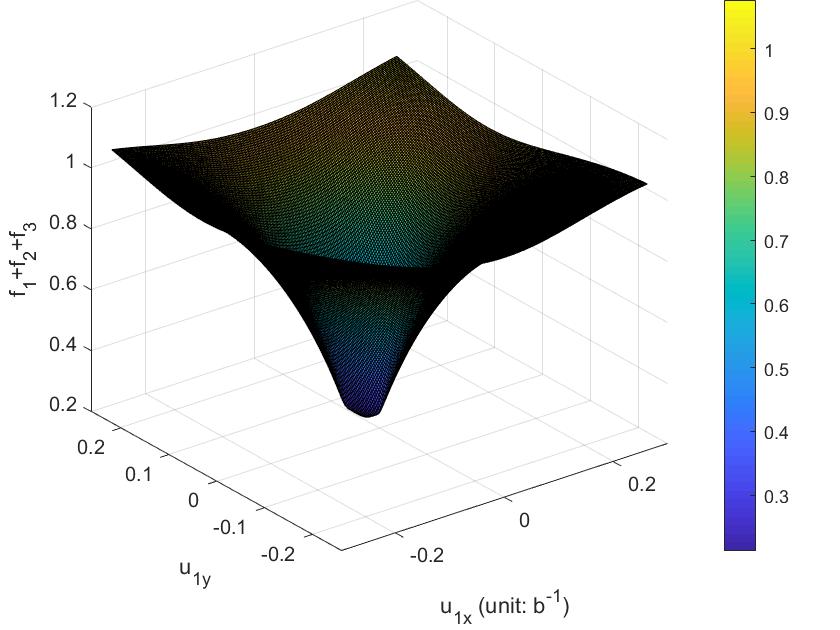}
	}
	\subfigure[$F=f_1+f_2+f_3$ contour plot]{
		\centering
		\includegraphics[width=0.45\textwidth]{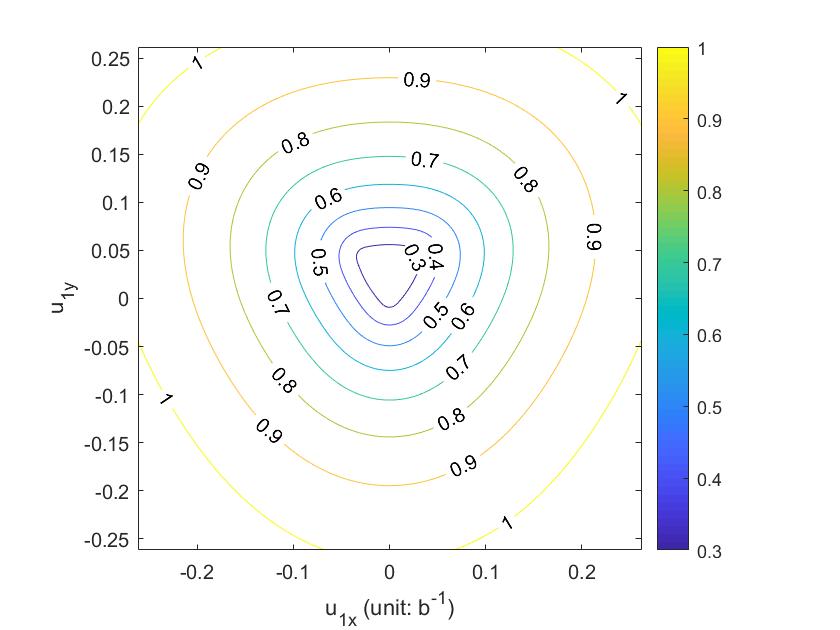}
	}
	\caption{
Surface and contour plots of component functions $f_j$, $j=1,2,3$, and the objective function $F=f_1+f_2+f_3$ in terms of variables $u_{1x}$ and $u_{1y}$ in Proposition~\ref{pro:sufficient J3 spf}. Here $\theta=2.5^\circ$, and the small parameter $\epsilon=\frac{1}{400} (\frac{\theta}{b})^2 $.
}
	\label{fig:fsc}
\end{figure}

\FloatBarrier

\begin{proposition}
[Specific three Burgers vectors case]
\label{pro:sufficient J3 spf}
Consider the three Burgers vectors case with
$
\bm{b}^{(1)} =(1,0,0)^\T b$, $\bm{b}^{(2)} =(\frac{1}{2},\frac{ \sqrt{3}}{2},0)^\T b$, $\bm{b}^{(3)}=(\frac{1}{2}, -\frac{ \sqrt{3}}{2},0)^\T b$,
%$b=0.286 \: \mathsf{nm} $,
$ \nu=0.347$, $\bm{a}=(0,0,1)^\T$, $\bm{n}=(0,0,1)^\T$, and $r_g =0.85 b$.
This is a $\{111\}$ planar twist grain boundary in fcc aluminum \cite{table1,table2,ZHANG}.
The constrained minimization problem  Eq.~\eqref{eq:three} in this case is
\begin{equation}
       \begin{aligned}
              \min &  \sum_{j =1}^{3} f_j( \bm{u}_j)  \\
              \text{s.t. }& u_{2x}=-u_{1x}-\frac{\theta}{\sqrt{3}b},
              & u_{3x}=-u_{1x}+\frac{\theta}{\sqrt{3}b},\\
              & u_{2y}=-u_{1y}+\frac{\theta}{b},
              & u_{3y}=-u_{1y}+\frac{\theta}{b},
       \end{aligned}
\end{equation}
There exist $\epsilon_0>0$ that depends on the value of $\theta$,
such that when the parameter $\epsilon \geq \epsilon_0$ in the objective function,
the solution of the equivalent unconstrained minimization problem
\begin{flalign*}
&\min_{\bm{u}_1}  F(u_{1x},u_{1y}), \\
&{\rm where} \ F(u_{1x},u_{1y}) =  \sum_{j =1}^{3} f_j( \bm{u}_j(\bm{u}_1)),
\end{flalign*}
is unique in the domain  $U:=\{ \bm{u}_1 \in \mathbb{R}^2: \|\bm{u}_1 \| \leq \frac{\theta_m}{b}\}$. Here $\theta_m=\frac{\pi}{12}=15^\circ$, which is the maximum possible angle for a low angle grain boundary.
\end{proposition}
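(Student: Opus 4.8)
The plan is to deduce the proposition directly from Theorem~\ref{pro:sufficient J3}: once I verify that the two-variable objective $F(u_{1x},u_{1y})$ satisfies conditions (S1) and (S2) on the open ball $\{\|\bm{u}_1\|<\theta_m/b\}$ for all $\epsilon$ above a threshold $\epsilon_0(\theta)$, the theorem yields that $F$ has at most one stationary point on $U$; since any interior minimizer of the differentiable $F$ is a stationary point, the minimizer is then unique. Thus the whole task reduces to establishing (S1) and (S2).

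First I would carry out the reduction explicitly. Using the affine relations from the constraints, $\bm{u}_2=-\bm{u}_1+\bm{d}_2$ and $\bm{u}_3=-\bm{u}_1+\bm{d}_3$ with constant offsets $\bm{d}_2,\bm{d}_3$ depending on $\theta$, so that each Jacobian $\partial\bm{u}_j/\partial\bm{u}_1$ equals $\pm I_2$. By the chain rule this gives $\nabla F=\nabla f_1(\bm{u}_1)-\nabla f_2(\bm{u}_2)-\nabla f_3(\bm{u}_3)$ and, crucially, $\nabla^2 F=\nabla^2 f_1(\bm{u}_1)+\nabla^2 f_2(\bm{u}_2)+\nabla^2 f_3(\bm{u}_3)$, so the Hessian of $F$ is an \emph{unsigned} sum of the three individual Hessians evaluated at the affinely related arguments. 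I would then record closed forms for $\nabla f_j$ and $\nabla^2 f_j$ by writing $f_j(\bm{u}_j)=g(\|\bm{u}_j\|^2)-\frac{\nu}{b^2}\,Q_j(\bm{u}_j)\,\frac{g(\|\bm{u}_j\|^2)}{\|\bm{u}_j\|^2+\epsilon}$, where $g(s)=b\sqrt{s+\epsilon}\log\frac{1}{r_g\sqrt{s+\epsilon}}$ is the isotropic factor and $Q_j(\bm{u}_j)=(\bm{u}_j\times\bm{n}\cdot\bm{b}^{(j)})^2$ the anisotropic quadratic form; differentiating $g$ and $Q_j$ produces every entry of $\nabla F$ and $\nabla^2 F$.

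Next I would exploit compactness together with the large-$\epsilon$ scaling. On the closed ball $\bar U$ all three arguments $\bm{u}_j$ stay in a fixed bounded set, so $g,g',g''$ and $Q_j$ are evaluated on a compact range; as $\epsilon$ grows the regularized radii $\|\bm{u}_j\|^2+\epsilon$ are uniformly comparable to $\epsilon$, which lets me expand $\nabla F$ and $\nabla^2 F$ in a controlled dominant balance and isolate the leading isotropic and anisotropic contributions. The role of the threshold $\epsilon_0(\theta)$ is precisely to make these leading terms dominate uniformly over $\bar U$ and to keep every argument away from the locus $\bm{u}_j=\bm{0}$, where $\nabla^2 f_j$ becomes unbounded as $\epsilon\to 0$; a bound of the type in Lemma~\ref{le:f} then guarantees the remainders are uniformly small on $\bar U$.

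With these estimates I would verify the two conditions. For (S1) I split $U$ into the set where $\|\nabla F\|$ is bounded below, on which $\|\nabla F\|^2$ dominates the small term $p\lambda_{\min}$, and a neighbourhood of the critical set, on which I must show $\lambda_{\min}(\nabla^2 F)>0$, i.e.\ that the summed Hessian is positive definite there. For (S2) I would rewrite the two bordered-Hessian determinants in the two-variable case and reduce $\det B_2\le 0$ to the single scalar inequality $(F_2,-F_1)\,\nabla^2F\,(F_2,-F_1)^{\T}\ge 0$, i.e.\ that the curvature of $F$ transverse to $\nabla F$ is nonnegative, so that the level curves of $F$ are convex. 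I expect this transverse-curvature inequality (S2) to be the main obstacle: because $F$ is a sum of three individually nonconvex $f_j$ evaluated at the related points, the sign of $\det B_2$ is a delicate cancellation between the isotropic curvature of $g$ and the anisotropic contribution of the $Q_j$, and establishing it uniformly over all of $U$ — rather than only at an isolated stationary point — is exactly where the careful choice of $\epsilon_0(\theta)$ and the identity-up-to-sign Jacobians $\partial\bm{u}_j/\partial\bm{u}_1=\pm I_2$ (inherited from the semi-orthogonality in Lemma~\ref{le:I}) must be used together.
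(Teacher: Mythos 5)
Your reduction is correct and matches the paper's own framing: both you and the paper prove the proposition by invoking Theorem~\ref{pro:sufficient J3}, and your chain-rule computation is right (the Jacobians $\partial\bm{u}_j/\partial\bm{u}_1=\pm I_2$ give $\nabla^2 F=\nabla^2 f_1+\nabla^2 f_2+\nabla^2 f_3$, and your identity $\det B_2=-\,(F_2,-F_1)\,\nabla^2F\,(F_2,-F_1)^{\T}$ is also correct). The problem is that the entire content of the proposition \emph{is} the verification of (S1) and (S2) on $U$, and your proposal never completes it: you explicitly defer both the positive definiteness of $\nabla^2 F$ near the critical set and the uniform transverse-curvature inequality, calling the latter ``the main obstacle.'' A plan that isolates the decisive inequality but does not establish it is not a proof. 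For comparison, the paper resolves this step numerically rather than analytically: it fixes $\theta=2.5^\circ$, $\epsilon=\frac{1}{400}(\theta/b)^2$, $p=0.01$, and exhibits surface plots and zero-level sets of the (S1) quantity and of $\det B_2$ over $U$ (Figure~\ref{fig: condition}), observing that they have the required signs; the thresholds $\epsilon_0(\theta)$ are likewise reported from computation (Table~\ref{tab:e1}).

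Your proposed analytic route also sits in the wrong asymptotic regime. The dominant-balance step requires $\|\bm{u}_j\|^2+\epsilon$ to be uniformly comparable to $\epsilon$ on $\bar U$, i.e.\ $\epsilon\gtrsim(\theta_m/b)^2$. But the actual threshold in the paper is $\epsilon_0=\frac{1}{400}(\theta/b)^2$ at $\theta=2.5^\circ$, while $(\theta_m/b)^2=36\,(\theta/b)^2$; so at the relevant values of $\epsilon$ the regularization is negligible on most of $U$ and your expansion is invalid there. Conversely, pushing $\epsilon$ large enough for your expansion to hold is not harmless: the factor $\log\frac{1}{r_g\sqrt{\|\bm{u}_j\|^2+\epsilon}}$ shrinks and eventually changes sign once $r_g\sqrt{\epsilon}$ approaches $1$, degrading the very curvature you need, so ``larger $\epsilon$'' does not monotonically improve (S1)--(S2). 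To turn your sketch into a proof you would need genuinely uniform curvature estimates on $U$ at the small-$\epsilon_0$ scale, tracking the interplay of $\epsilon$, $\theta$, $\theta_m$ and $r_g$ --- or else accept, as the paper implicitly does, that this verification is numerical.
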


\begin{proof}
We verify that $F(u_{1x}, u_{1y})$ satisfies the conditions in Theorem~\ref{pro:sufficient J3} on $U$,
if given an appropriate  positive constant $\epsilon$.
For example, we set $\theta = 2.5^\circ$ and $\epsilon = \frac{1}{400}(\frac{\theta}{b})^2$.
In Figure~\ref{fig: condition},
we plot the surface and zero-level set of the quantities in Theorem~\ref{pro:sufficient J3}.
Figure~\ref{fig:s1}  shows the value of $ F_1^2+F_2^2+ p\frac{F_{11}+F_{22}-\sqrt{(F_{11}-F_{22})^2+4F_{12}^2}}{2}$ with $p=0.01$,
and Figure~\ref{fig:s2}  presents the determinant value $\left|\begin{array}{ccc}
              0 &  F_1  & F_2\\
              F_1  &  F_{11} & F_{12} \\
              F_2 & F_{21} & F_{22}
       \end{array}\right| $.
Therefore, Theorem~\ref{pro:sufficient J3} holds for $F(u_{1x}, u_{1y})$ on $U$.
As a result, $F(u_{1x}, u_{1y})$ is quasi-convex on $U$ and its minimum is unique over the domain $U$, see Figure~\ref{fig:fsc}(e) and (f).
\hfill\qed
\end{proof}

\FloatBarrier
\begin{figure}[!hbtp]
	\centering
	\subfigure[(S1) surface plot]
		{\includegraphics[width=0.45\textwidth]{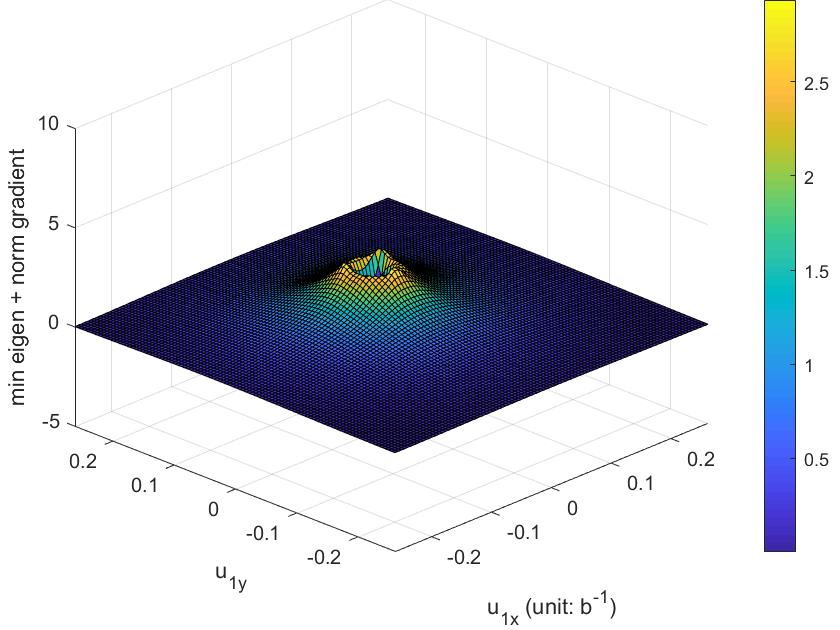}
		          \label{fig:s1}
	}
	\subfigure[(S1) zero-level set]{
		\includegraphics[width=0.45\textwidth]{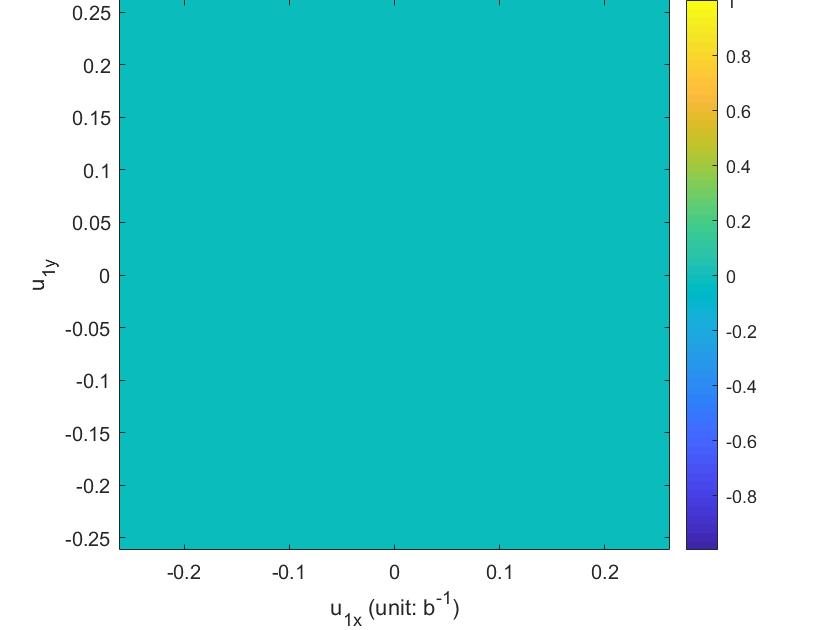}
		\label{fig:z1}
	}	
	\subfigure[(S2) surface plot]{
		\includegraphics[width=0.45\textwidth]{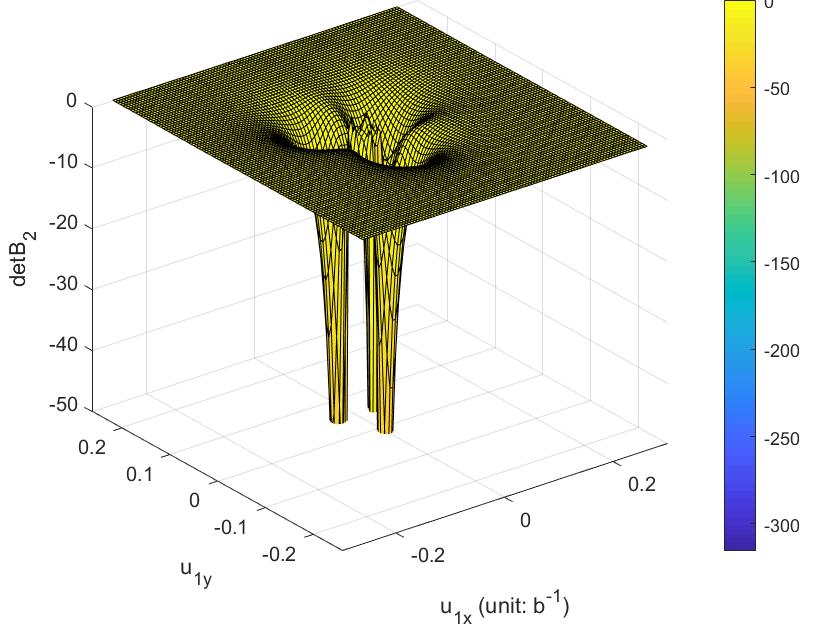}
           \label{fig:s2}
	}
	\subfigure[(S2) zero-level set]{
		\includegraphics[width=0.45\textwidth]{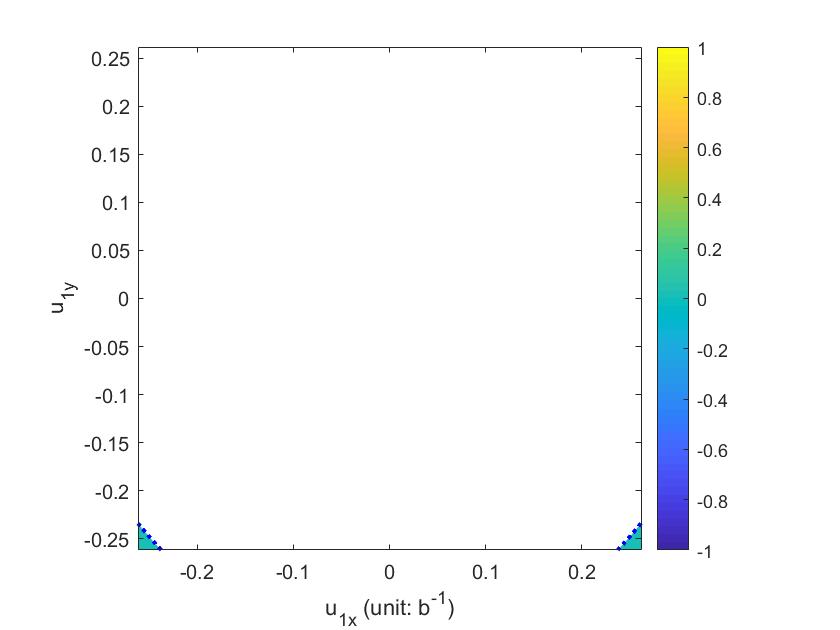}
        \label{fig:z2}
	}
	\caption{Surface and zero-level set plots of the quantities in Proposition~\ref{pro:sufficient J3 spf}  in terms of $u_{1x}, u_{1y}$.  The regions in green in (b) and (d) represent the domain on which the function is positive. Here (S1) denotes the function in the condition (S1), and (S2) denotes the determinant of the $3\times 3$ matrix in the condition (S2).}
	\label{fig: condition}
\end{figure}

\begin{remark} Note that in Proposition~\ref{pro:sufficient J3 spf}, the domain
 $U:=\{ \bm{u}_1 \in \mathbb{R}^2: \|\bm{u}_1 \| \leq \frac{\theta_m}{b}\}$ is the physically-meaningful domain for a low angle grain boundary.
The condition
$\|\bm{u}_1 \| \leq \frac{\theta_m}{b}$, where $\theta_m=\frac{\pi}{12}=15^\circ$, comes from the fact that the misorientation angle $\theta$ for a low angle grain boundary does not exceed  $15^\circ$ \cite{table1,table2}. If this condition does not hold, i.e., $\|\bm{u}_1 \|> \frac{\theta_m}{b}$, the inter-distance between the $\mathbf b_1$-dislocations will be less than $\frac{b}{\theta_m}=3.82b$. In this case, the core regions of these dislocations will  overlap, and there will be no clear dislocation structure of the grain boundary.
\end{remark}

\begin{remark}\label{rmk3}
Following Proposition~\ref{pro:sufficient J3 spf}, in practice, we can choose the small regularization parameter $\epsilon$ in the objective function (Eqs.~\eqref{eqn:u-1} and \eqref{eqn:u-2})  to be $\epsilon_0$, which is the smallest value for the uniqueness of the minimum to hold.
%{\color{blue}
%Physically, the smallness of $\epsilon_0$ reflects a regime where the misfit across the grain boundary is minimal—this corresponds to a highly constrained configuration space where only a narrow range of interfacial structures is energetically feasible. In this regime, the energy landscape becomes sufficiently regular such that the nonconvex problem effectively behaves like a quasi-convex one locally, leading to uniqueness of the solution. Thus, $\epsilon_0$ captures a critical level of interfacial precision needed to ensure a well-defined grain boundary structure.}
In Table~\ref{tab:e1}, we list the value of $\epsilon_0$ such that the conditions hold for the objective function to have a unique minimum in the practical feasible region, for different misorientation
angle $\theta=2.5^\circ, 3.75^\circ, 7.5^\circ$,  respectively. Since the order of $\mathbf u_j$ (or $\nabla \eta_j$) is $\frac{\theta}{b}$, the errors induced by these values of regularization parameter $\epsilon$ are indeed small.  
Physically, this regularization parameter $\epsilon_0$, which provides a lower cut-off value for $\|\nabla \eta_j\|$, sets an upper cut-off length scale that is consistent with the length scales of the dislocation structures being considered in experimental observations and atomistic simulations. (Recall that the inter-dislocation distance is $1/\|\nabla \eta_j\|$.) For a dislocation array with very large inter-dislocation distance, the stability of the dislocation structure is weak \cite{table2,table1,Zhu2011}, and other physical mechanisms will be dominant. This means that  the uniqueness conclusion in Proposition 2 holds for physically meaningful parameters range of the low angle grain boundaries.

\begin{table}[htbp]
%\footnotesize
 \centering
       \caption{Value of $\epsilon_0$ for different misorientation
              angle $\theta$.  }
       \label{tab:e1}
              \begin{tabular}{cccc}
                     \hline\noalign{\smallskip}
                     $\theta$ ($^\circ$) &  2.5 & 3.75 & 7.5 \\
                \noalign{\smallskip}\hline\noalign{\smallskip}
                     $\epsilon_0$ $\left((\frac{\theta}{b})^2\right)$& $\frac{1}{400}$ & $\frac{1}{250}$ & $\frac{1}{92}$  \\
                     \noalign{\smallskip}\hline
              \end{tabular}
\end{table}
\end{remark}

%\begin{figure}[hbtp]
%	\centering
%	\subfigure[surface plot]{
%		\includegraphics[width=0.45\textwidth]{figs/F.jpg}
%	}
%	\subfigure[contour plot]{
%		\centering
%		\includegraphics[width=0.45\textwidth]{figs/Fcontour.jpg}
%	}
%	%
%	\caption{Surface and contour plots of the objective function $F$ in terms of $u_{1x}, u_{1y}$ in Proposition~\ref{pro:sufficient J3 spf}.  }
%	\label{fig: F}
%\end{figure}
%\FloatBarrier

%\begin{remark}
Finally, we remark that if both the ADMM limit $ \bm{u}^{\star}$ and the minimum of the objective function are in domain $U$,
then $\bm{u}^{\star}$ is the minimum of the constrained minimization problem Eq.~\eqref{eq:cp}.
Because  for a differentiable function, the minimum point should be a stationary point, and quasi-convexity indicates that the objective function at most have one stationary point on the subdomain that contains all the possible minima in $U$.
%\end{remark}

\section{Numerical simulations}
\label{sec: numerical}
We use our modified ADMM algorithm to solve the constrained minimization problem in Eq.~\eqref{eq:cp},
which is
$$ 
	\begin{aligned}
		\min  &  \sum_{j =1}^{J} f_j( \bm{u}_j)  \\
		\text{s.t. }&  \sum_{j =1}^{J} A_j \bm{u}_j =\bm{ c }.
	\end{aligned}
$$

In the simulations, we consider a $(1 1 1)$ twist boundary in aluminum which has the fcc lattice structure \cite{table2}.
The crystallography directions $[\bar{1} 1 0],[ \bar{1 1} 2]$ and $[1 1 1]$ are chosen to be the $x, y,z$ axes, respectively. In this system, there are $J=6$  Burgers vectors $ \bm{b}^{(1)}=b(1,0,0)^\T, \:
\bm{b}^{(2)}=b(\frac{1}{2},\frac{\sqrt{3}}{2},0)^\T, \:
\bm{b}^{(3)}=b(\frac{1}{2},-\frac{\sqrt{3}}{2},0)^\T, \:
\bm{b}^{(4)}=b(0,\frac{\sqrt{3}}{3},\frac{\sqrt{6}}{3})^\T, \:
\bm{b}^{(5)}=b(\frac{1}{2},\frac{\sqrt{3}}{6},-\frac{\sqrt{6}}{3})^\T, \:
\bm{b}^{(6)}=b(-\frac{1}{2},\frac{\sqrt{3}}{6},-\frac{\sqrt{6}}{3})^\T$, with the same length $b$, which is $0.286 \mathsf{nm}$.
The grain boundary unit normal vector is $\bm{n}=(0,0,1)^\T$ and the unit vector of the rotation axis is
$\bm{a}=(0,0,1)^\T $ in the original formulation in Eqs.~\eqref{eq:of ori}-\eqref{eq:frank}. The Poisson ratio $\nu=0.347$.
The values of parameters are   $r_g =0.85b$,  $\theta=2.5^\circ$, and $\epsilon=\frac{1}{325}(\frac{\theta}{b})^2$.

In our ADMM algorithm, the initial penalty parameter $ \rho^{(0)}=100$, and the increasing penalty factor $\beta=1.001 $. The  time step  the gradient descent for each block is $\alpha_{\text{ADMM}}=5\times 10^{-4}$. The initial Lagrangian multiplier is
$ w^{(0)}= \bm{0} \in \mathbb{R}^{6}$, and the initial point $\bm{u}^{(0)}=\bm{0} \in \mathbb{R}^{12}$. 
The ADMM iteration is terminated when both the stationarity condition
$\|\nabla_{u} L_{\rho}( \bm{u}_1,\dots,\bm{u}_J, w)\| \leq 10^{-8}$ 
and the primal feasibility condition
$\| \sum_{j=1}^{J}  A_j\bm{u}_j^{(k)}  -\bm{c} \|\leq 10^{-8}  $ 
(in the unit $1/b$).
The subproblems are solved using the gradient descent method, with the following termination conditions: (1) $\|\nabla_{\bm{u}_j} L_{\rho^{(k)}}\|_2<10^{-10}$; or (2) the number of inner iterations reaches 100.
The results are compared with those using the penalty method and the ALM.
For the penalty method, the penalty parameter is $\rho=800$, and the gradient descent time step is $\alpha_{\text{PM}}=5\times10^{-4}$;
for ALM,  $\rho=100$, $\alpha_{\text{ALM}}=5\times10^{-4}$; and other parameters in these two methods are the same as those in the ADMM. Note that  convergence of the penalty method and the ALM applied to the constrained minimization problem  in Eq.~\eqref{eq:cp} can be proved by Theorem~\ref{th:p} \cite{Edwin} and Theorem~\ref{th:a} \cite{Bertsekas}; see Appendix~\ref{sec:convergences}.

\begin{figure}[!hbtp]
	\centering
	\subfigure[]{
     \includegraphics[width=0.45\textwidth]{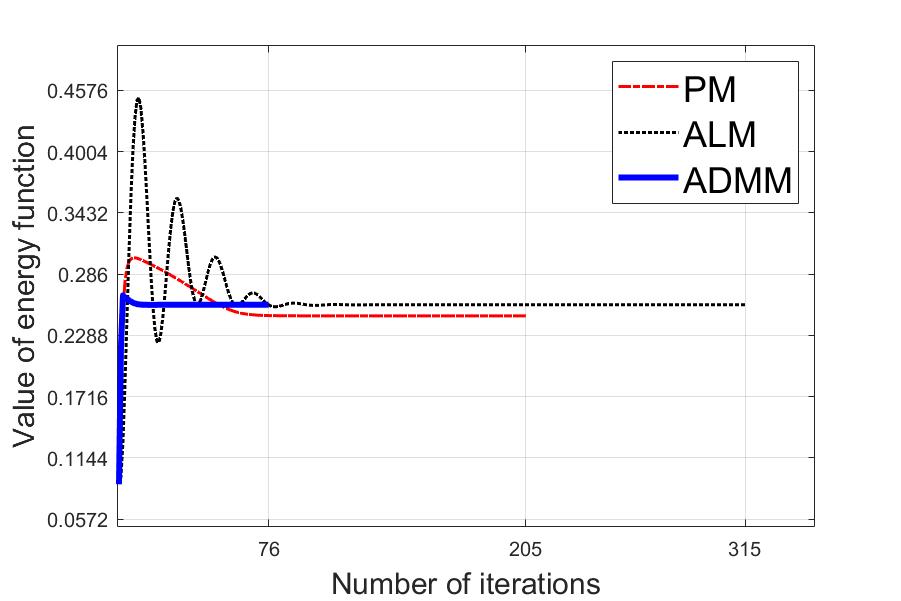}
		\label{fig:f}
	}
    \subfigure[]{
    	\includegraphics[width=0.45\textwidth]{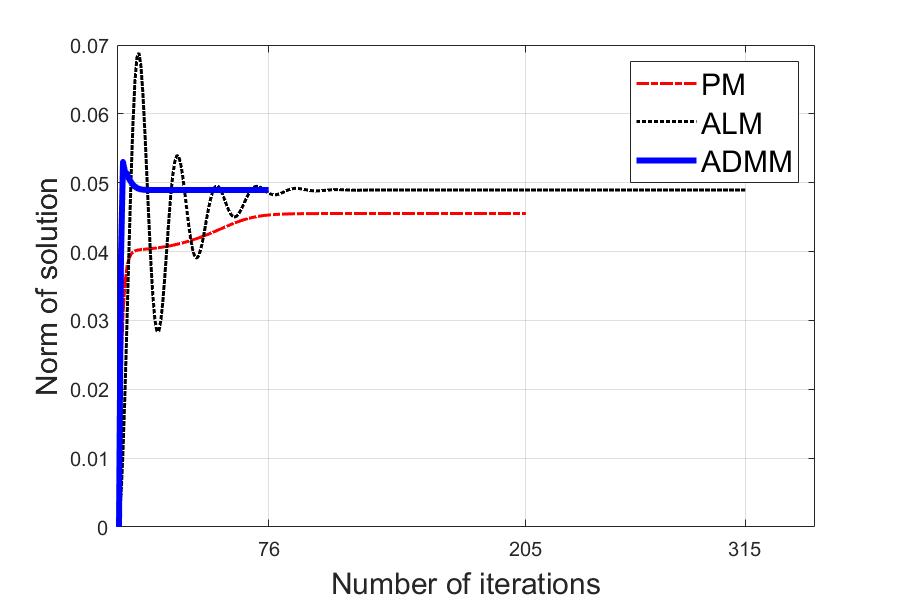}
    	\label{fig:u}
    }
	\subfigure[]{
		\includegraphics[width=0.45\textwidth]{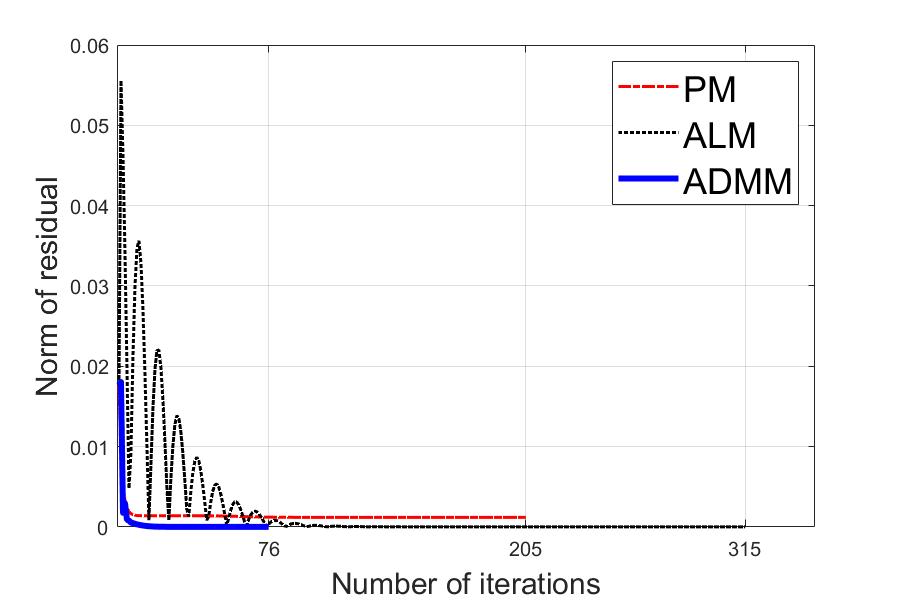}
		\label{fig:r}
	}
	\subfigure[]{
  \includegraphics[width=0.45\textwidth]{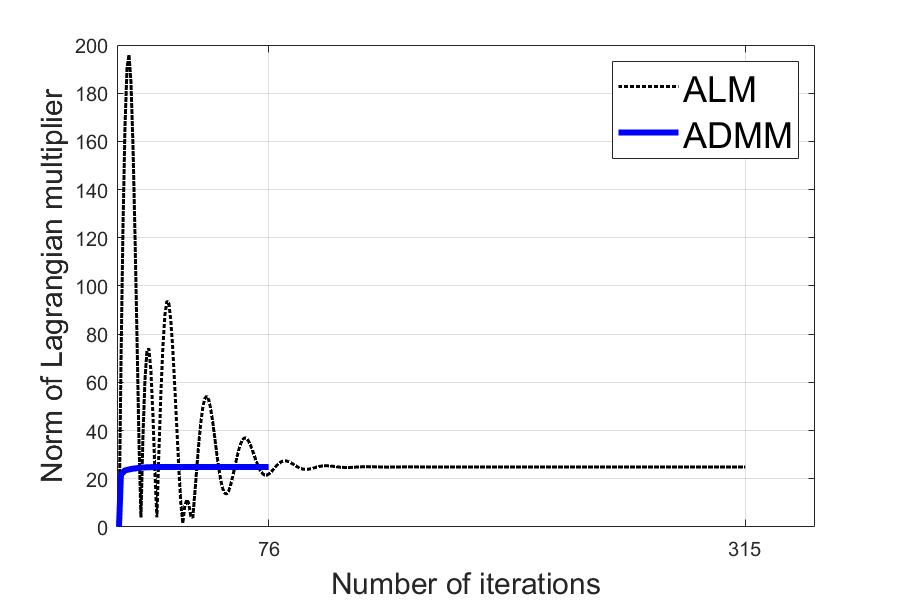}
		\label{fig:w}
	}
	\caption{Simulation results obtained by using our ADMM and comparisons with those by using the penalty method (PM) and the ALM.
(a) Value of the objective function $\sum_{j=1}^6 f_j(\bm{u}_j)$.
		(b) The norm of the solution $\bm{u} $.
            (c) The norm of the residual $ \|\sum_{j=1}^J A\bm{u}_j- \bm{c}\|$, which measures how the constraints are satisfied.
		(d) The norm of the Lagrangian multiplier $w$.}
	\label{fig:fig4}
\end{figure}

Simulation results obtained by using our ADMM and comparisons with those by using the penalty method  and the ALM are shown in Figure~\ref{fig:fig4}.
These three numerical methods are examined by the evolutions of the objective function $\sum_{j=1}^6 f_j(\bm{u}_j)$  (Figure~\ref{fig:f}),
the norm of the solution $ \|\bm{u}\|$ (Figure~\ref{fig:u}), and
the norm of the residual of the constraint $ \|\sum_{j=1}^J A\bm{u}_j- \bm{c}\|$ (Figure~\ref{fig:r}),
and the norm of the Lagrangian multiplier $\|w\|$ (Figure~\ref{fig:w})
in the iteration processes.
It can be seen that under the same stopping criterion, our ADMM  stops much  earlier than the penalty method and the ALM; see Figure~\ref{fig:f}.
From Figure~\ref{fig:r}, when the algorithms stop, both the ADMM and the ALM reach the feasible region
$ \{\bm{u}| \sum_{j=1}^J A\bm{u}_j= \bm{c}\}$ accurately, while the penalty method gives a solution with larger residual in the constraint and  a noticeable deviation from the solutions as shown in  Figure~\ref{fig:u}.
In fact, the penalty method requires a much larger penalty parameter $\rho$ to achieve a solution that is as accurate as those given by the ADMM and the ALM.

Hence, compared to the penalty method, our ADMM converges much faster and gives much more accurate solution that satisfies the constraint more accurately.
Compared to the ALM, our ADMM can achieve a solution to the constrained minimization problem with same level of accuracy in much less iterations, and without the oscillations in the ALM.
We also plot the norm of the Lagrangian multiplier $w$ during iteration processes of the ADMM and the ALM; see Figure~\ref{fig:w}.
It can be seen that the Lagrangian multiplier is indeed bounded and converges, which is consistent with the assumption of boundedness of multipliers. 
Moreover, we construct the matrix $P$ according to Proposition~\ref{pro:condition_for_w} and compute its spectral radius directly. The numerical result shows that
\[
\rho(P) \approx 0.6133 < 1,
\]
which is consistent with the stability condition in Proposition~\ref{pro:condition_for_w}. This provides numerical evidence for the boundedness of the multiplier sequence and is in agreement with the convergence behavior observed in the experiments. 

The solution found by ADMM is $(  0,     0.028, -0.024,  0.014,  0.024, 0.014, 0,  0, \\ 0, 0, 0, 0 )b^{-1}$, which agrees with the theoretical result of the  twist boundary \cite{table1,table2}.
In Table~\ref{tab:e2}, we show results of the density of the $\bm{b}^{1}$-dislocations for this twist grain boundary with different values of misorientation angle $\theta$ obtained by using 
our ADMM algorithm, the simulation results obtained in Ref.~\cite{ZHANG} using the penalty method as well as the theoretical values in Ref.~\cite{table1,table2}. It can be seen that
these three results are consistent, and the errors are within the range caused by the regularization parameter $\epsilon$ discussed in Remark \ref{rmk3}.
%the results of our ADMM are more accurate than those simulation results using the penalty method obtained in Ref.~\cite{ZHANG}.
%Theoretical value 1/D:\|\nabla\|
%

\begin{table}[!htbp]
	\caption{Density of $\bm{b}^{1}$-dislocations on the $[111]$ twist boundaries. Unit: $b^{-1}$.}
	\label{tab:e2}
	\centering
		\begin{tabular}{cccc}
			\hline\noalign{\smallskip}
			$\theta$ ($^\circ$ )&  2.5 & 3.75 & 7.5  \\
			\noalign{\smallskip}\hline\noalign{\smallskip}
			ADMM & 0.028   & 0.042 &  0.082   \\
			\noalign{\smallskip}\hline\noalign{\smallskip}
			Simulation in \cite{ZHANG} & 0.028   & 0.042 &  0.085  \\
			\noalign{\smallskip}\hline\noalign{\smallskip}
			Theoretical value & 0.029 & 0.044 & 0.087   \\
			\noalign{\smallskip}\hline
		\end{tabular}
%		\begin{tabular}{cccc}
%			\hline\noalign{\smallskip}
%			$\theta$ ($^\circ$ )&  2.5 & 3.75 & 7.5  \\
%			\noalign{\smallskip}\hline\noalign{\smallskip}
%			ADMM & 0.0282   & 0.0422 &  0.0821   \\
%			\noalign{\smallskip}\hline\noalign{\smallskip}
%			Simulation in \cite{ZHANG} & 0.0291   & 0.0436 &  0.0873  \\
%			\noalign{\smallskip}\hline\noalign{\smallskip}
%			Theoretical value & 0.0282 & 0.0424 & 0.0847   \\
%			\noalign{\smallskip}\hline
%		\end{tabular}
\end{table}

\paragraph{Sensitivity analysis on $\rho^{(0)}$.} 
For each block subproblem, the Hessian  
$\nabla^2_{\bm{u}_j} L_j(\bm{u}_j) 
	=\nabla^2 f(\bm{u}_j) +\rho b^2 I_2$
is positive definite when $\rho >\frac{C}{b^2} $, where $C >0$ is a positive constant such that
$\| \nabla ^2 f_j \| \leq C$ (as stated in Lemma~\ref{le:f}). 
Thus, choosing an initial penalty parameter $\rho^{(0)} > \frac{C}{b^2}$ guarantees strong convexity of each block subproblem.
Satisfying this condition, we further conduct simulations with $\rho^{(0)}=30,100,300$ in our ADMM algorithm.
Figure~\ref{fig:sensitivity} presents the evolution of four key quantities during the iterations: the objective function $\sum_{j=1}^6 f_j(\bm{u}_j)$  (Figure~\ref{fig:saf}),
the norm of the solution $ \|\bm{u}\|$ (Figure~\ref{fig:sau}), 
the norm of the constraint residual $ \|\sum_{j=1}^J A\bm{u}_j- \bm{c}\|$ (Figure~\ref{fig:sar}),
and the norm of the Lagrangian multiplier $\|w\|$ (Figure~\ref{fig:saw}).
The results reveal that the choice of $\rho^{(0)}$ does influence the convergence speed,
although the solutions under all settings  eventually converge to the same one of the constrained minimization problem in Eq.~\eqref{eq:cp}.
When $\rho^{(0)} = 30$, the algorithm converges relatively slower due to the weaker initial constraint enforcement.
In contrast, a larger value such as $\rho^{(0)} = 300$ enforces the constraints faster in early iterations, but can induce oscillatory behavior and slower reduction of the objective value due to over-penalization.
The intermediate value $\rho^{(0)} = 100$ offers a good balance between stability and convergence speed. 
These findings indicate that, while the final solution is relatively insensitive to the choice of the initial penalty parameter, a well-chosen value does improve the convergence rate and numerical stability of the algorithm.

\begin{figure}[!hbtp]
	\centering
	\subfigure[]{
     \includegraphics[width=0.4\textwidth]{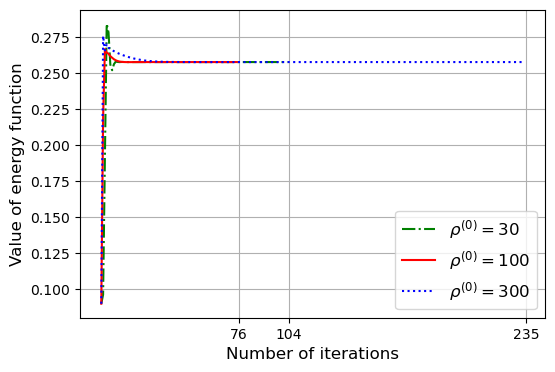}
		\label{fig:saf}
	}
    \subfigure[]{
    	\includegraphics[width=0.4\textwidth]{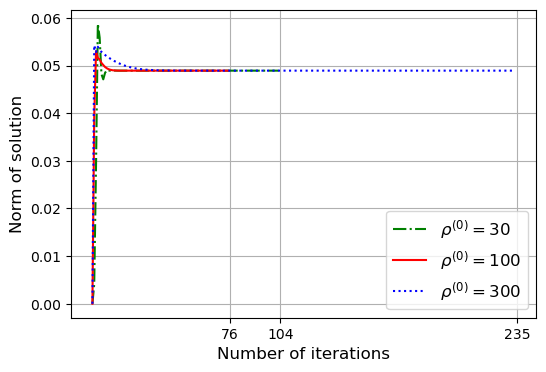}
    	\label{fig:sau}
    }
	\subfigure[]{
		\includegraphics[width=0.4\textwidth]{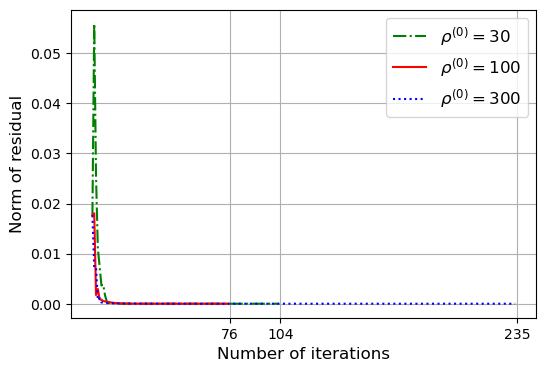}
		\label{fig:sar}
	}
	\subfigure[]{
  \includegraphics[width=0.4\textwidth]{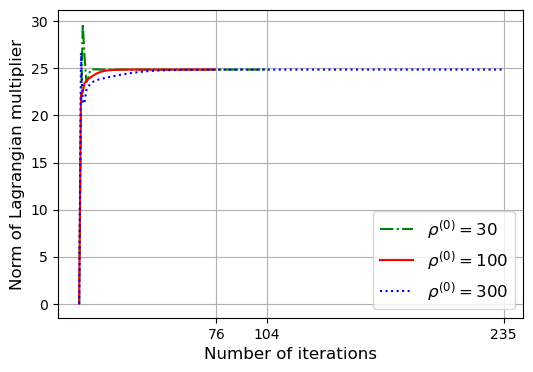}
		\label{fig:saw}
	}
	\caption{ Simulation results obtained by using our ADMM with $\rho^{(0)}=30,100,300$, respectively.
            (a) Value of the objective function $\sum_{j=1}^6 f_j(\bm{u}_j)$.
		(b) The norm of the solution $\bm{u} $.
		(c) The norm of the residual $ \|\sum_{j=1}^J A\bm{u}_j- \bm{c}\|$, which measures how the constraints are satisfied. 
		(d) The norm of the Lagrangian multiplier $w$.}
	\label{fig:sensitivity}
\end{figure}

\FloatBarrier 
\paragraph{Sensitivity analysis on $\beta$.}
To assess the impact of the penalty increment factor $\beta$, we
conducted additional simulations with $\beta=1.0005,1.001,1.05$, and analyzed their influence on the convergence behavior. 
Specifically, we report evolutions of the objective function, the norm of the solution, the constraint residual,
and the Lagrange multiplier norm over the iterations (see Fig. \ref{fig:beta_sensitivity}).
Our results indicate that excessively large values of $\beta$ lead to overly rapid growth of the penalty parameter $\rho^{(k)}$, which can cause numerical instability and deteriorate convergence behavior. For instance, when $\beta=1.1$, the penalty parameter grows exponentially and reaches values beyond the numerical representability, leading to $\rho^{(k)} = \text{NaN}$ undefined. Consequently, the primal updates in the modified ADMM become ill-defined and the algorithm diverges.
On the other hand, choosing $\beta$ too close to 1 results in very slow penalty growth and consequently requires substantially more iterations to meet the same stopping criteria.

\begin{figure}[!hbtp]
	\centering
	\subfigure[]{
     \includegraphics[width=0.4\textwidth]{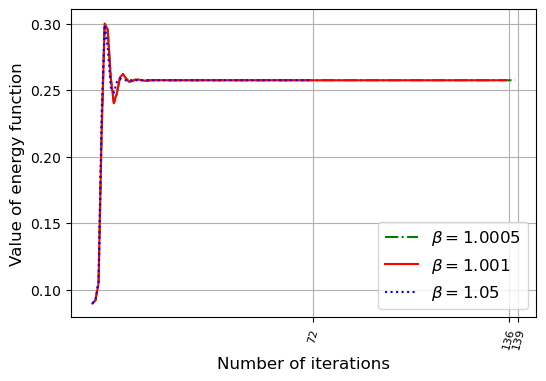}
		\label{fig:betaf}
	}
    \subfigure[]{
    	\includegraphics[width=0.4\textwidth]{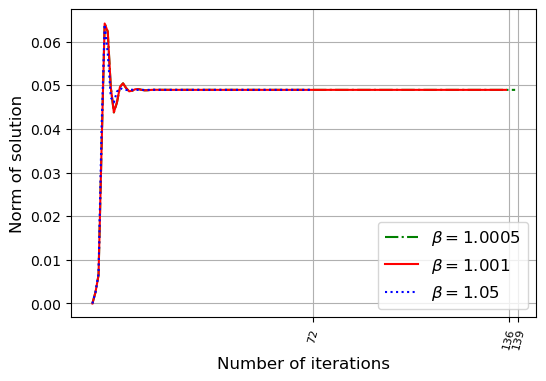}
    	\label{fig:betau}
    }
	\subfigure[]{
		\includegraphics[width=0.4\textwidth]{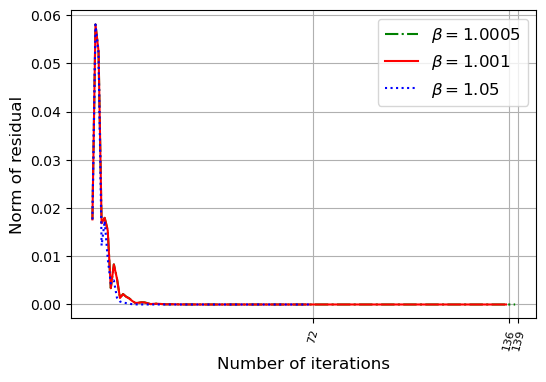}
		\label{fig:betar}
	}
	\subfigure[]{
  \includegraphics[width=0.4\textwidth]{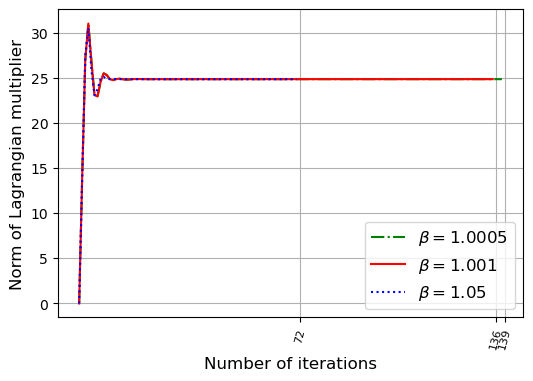}
		\label{fig:betaw}
	}
	\caption{  Simulation results obtained by using our ADMM with  $\rho^{(0)}=20$ and varying $\beta=1.0005,1.001,1.05$, respectively.
            (a) Value of the objective function $\sum_{j=1}^6 f_j(\bm{u}_j)$.
		(b) The norm of the solution $\bm{u} $.
		(c) The norm of the residual $ \|\sum_{j=1}^J A\bm{u}_j- \bm{c}\|$, which measures how the constraints are satisfied. 
		(d) The norm of the Lagrangian multiplier $w$.} 
	\label{fig:beta_sensitivity}
\end{figure}

%\FloatBarrier
\paragraph{Numerical verification of uniqueness.} 
We perform a numerical examination of the uniqueness for the above example in the case $J=6$. 
By eliminating dependent variables, the linear constraints in Eq.~\eqref{eq:cp} can be rewritten in terms of the independent variables as 
\begin{equation}
    \begin{cases}
    u_{2x} =-u_{1x}-u_{5x}-\frac{\theta}{\sqrt{3}b},\\
    u_{3x} =-u_{1x}+u_{6x}+\frac{\theta}{\sqrt{3}b},\\
    u_{4x} =u_{5x}+u_{6x},\\
    u_{2y} =-u_{1y}-u_{5y}+\frac{\theta}{b},\\
    u_{3y} =-u_{1y}+u_{6y}+\frac{\theta}{b},\\
    u_{4y} =u_{5y}+u_{6y}.
    \end{cases}       
\end{equation}
This leads to the equivalent unconstrained problem 
\begin{align}\label{eqn:6d}
    \min F(u_{1x},u_{1y},u_{5x},u_{5y},u_{6x},u_{6y})
    &= 
    \sum_{j =1}^{6} f_j( \bm{u}_j(\bm{u}_1,\bm{u}_5,\bm{u}_6)).
\end{align}
To assess quasi-convexity of the reduced six-dimensional objective function, we perform a numerical verification based on the convexity of its sublevel sets.
Specifically, we consider the domain
\[
U = \left[-\frac{\theta}{b}, \frac{\theta}{b}\right]^6,
\]
which reflects the natural scaling of the variables. 
We first sample $20{,}000$ points uniformly from $U$ and evaluate the objective function $F$. Based on the empirical distribution of $F$ values, we select 
several representative levels $t$ corresponding to quantiles from $10\%$ to $90\%$ at increments of $10\%$. 
For each level $t$, we construct the approximate sublevel set
\[
Q_t = \{ x \in U : F(x) \le t \}.
\]
We then randomly sample pairs of points $(x,y)$ from $Q_t$ and evaluate multiple interior points on the line segment connecting them,
\[
z_\lambda = \lambda x + (1-\lambda) y, \quad \lambda \in (0,1).
\]
In our implementation, we use a uniform discretization of $(0,1)$ with nine interior points.
For each sampled pair and each $\lambda$, we verify whether
\[
F(z_\lambda) \le t.
\]
This provides a direct numerical test of the convexity of the sublevel set $Q_t$.
As shown in Table~\ref{table:subset}, no violations are observed across all tested levels, with a total of $30{,}000$ segment evaluations per level. Moreover, the quantity $F(\lambda x+(1-\lambda)y)-t$ remains strictly negative in all cases, with magnitude on the order of $10^{-4}$, which is consistent with sampling points close to the boundary of the sublevel sets.
These results provide strong numerical evidence supporting the convexity of the sublevel sets and hence the quasi-convexity of the objective function within the prescribed domain.
We have verified that Condition (S1) holds on $U$, which indicates that any stationary point is a local minimum. Combined with the quasi-convexity evidence above, this suggests that the minimizer is unique, if it exists.

\begin{table}[htbp]
\centering
\caption{Numerical verification of sublevel-set convexity in the 6D domain using quantiles from $10\%$ to $90\%$ with increments of $10\%$. For each level $t$, we report the number of sampled points in $Q_t$, the number of tested point pairs, the total number of segment evaluations, the number of violations of the condition $F(\lambda x+(1-\lambda)y)\le t$, and the maximum value of $F(\lambda x+(1-\lambda)y)-t$.}
\begin{tabular}{c|c|c|c|c|c}
\hline
Level $t$ & $\#Q_t$ & Pairs & Checks & Violations & Max violation \\
\hline
2.0067 & 2,000  & 3,000 & 30,000 & 0 & $-1.30\times 10^{-4}$ \\
2.1982 & 4,000  & 3,000 & 30,000 & 0 & $-2.65\times 10^{-5}$ \\
2.3429 & 6,000  & 3,000 & 30,000 & 0 & $-2.45\times 10^{-4}$ \\
2.4636 & 8,000  & 3,000 & 30,000 & 0 & $-1.15\times 10^{-6}$ \\
2.5803 & 10,000 & 3,000 & 30,000 & 0 & $-2.38\times 10^{-4}$ \\
2.6984 & 12,000 & 3,000 & 30,000 & 0 & $-1.03\times 10^{-5}$ \\
2.8178 & 14,000 & 3,000 & 30,000 & 0 & $-1.55\times 10^{-4}$ \\
2.9536 & 16,000 & 3,000 & 30,000 & 0 & $-5.34\times 10^{-6}$ \\
3.1318 & 18,000 & 3,000 & 30,000 & 0 & $-1.47\times 10^{-4}$ \\
\hline
\end{tabular}
\label{table:subset}
\end{table}

\begin{figure}[hbt]
	\centering
	\subfigure[]{
     \includegraphics[width=0.22\textwidth]{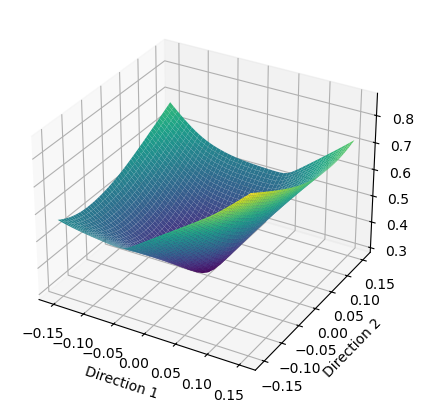} 
	}
    \subfigure[]{
    	\includegraphics[width=0.22\textwidth]{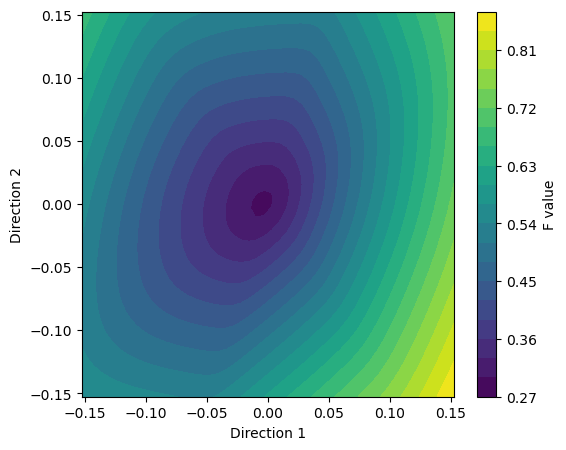} 
    }
    \subfigure[]{
     \includegraphics[width=0.22\textwidth]{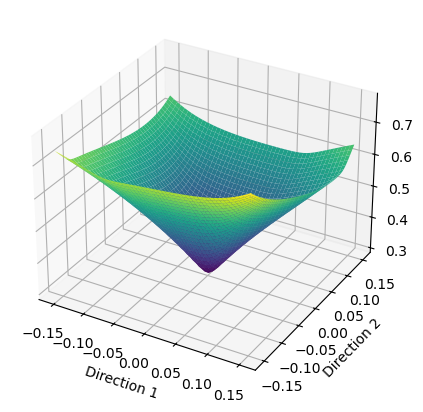} 
	}
    \subfigure[]{
    	\includegraphics[width=0.22\textwidth]{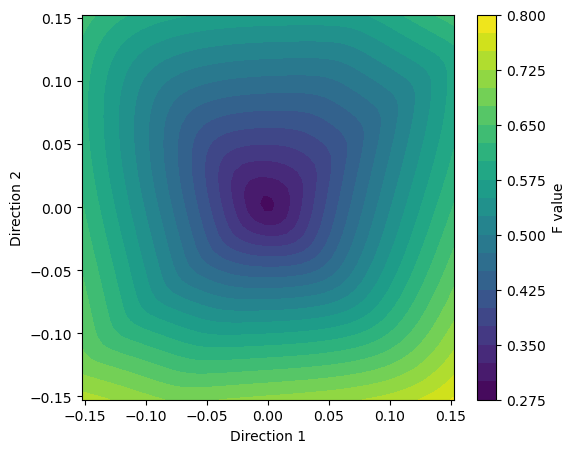} 
    }

    \subfigure[]{
     \includegraphics[width=0.22\textwidth]{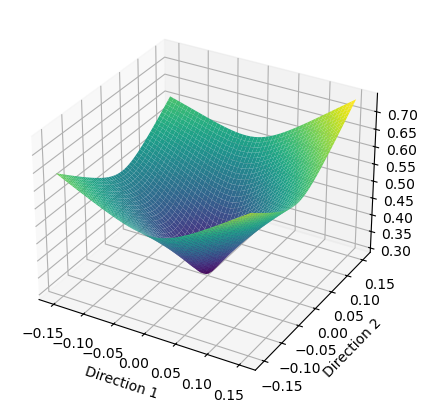} 
	}
    \subfigure[]{
    	\includegraphics[width=0.22\textwidth]{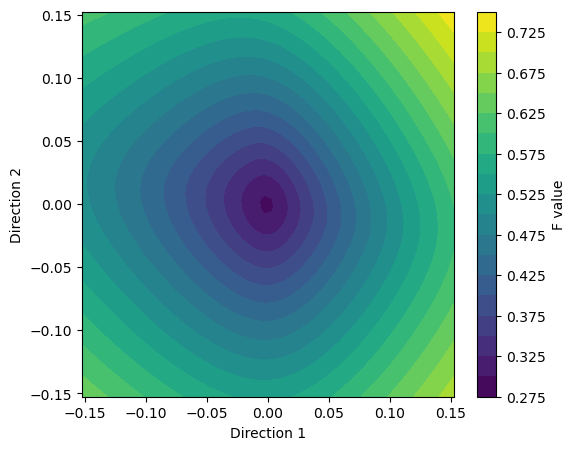} 
    }
    \subfigure[]{
     \includegraphics[width=0.223\textwidth]{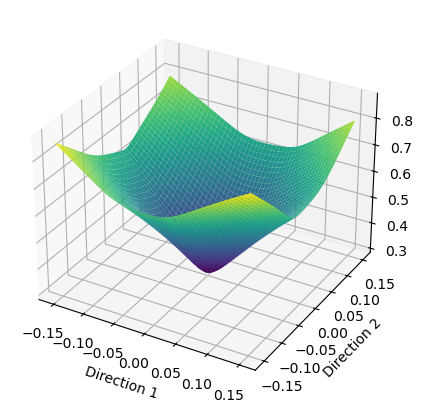} 
	}
    \subfigure[]{
    	\includegraphics[width=0.22\textwidth]{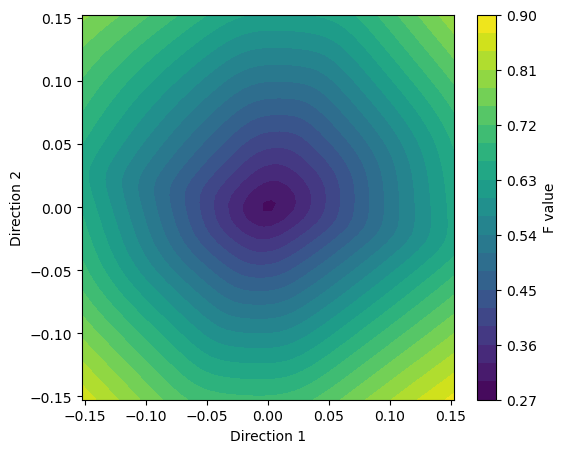} 
    }

    \subfigure[]{
     \includegraphics[width=0.22\textwidth]{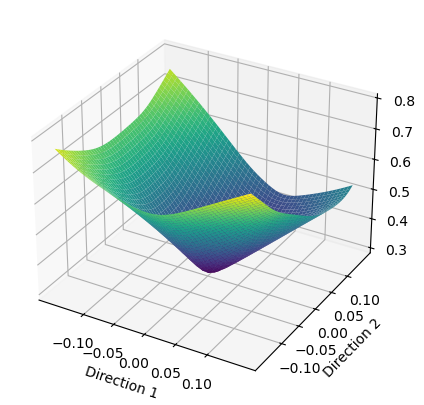} 
	}
    \subfigure[]{
    	\includegraphics[width=0.22\textwidth]{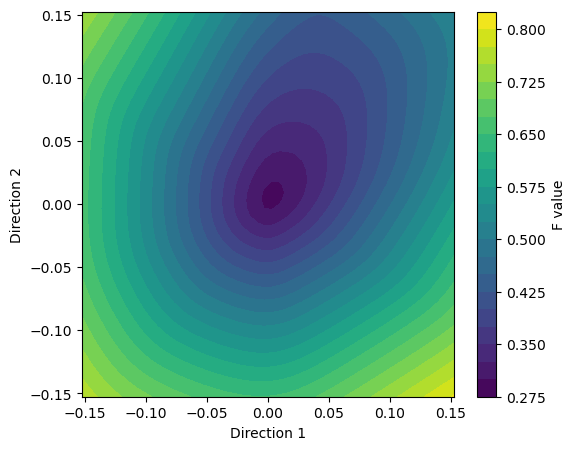} 
    }
    \subfigure[]{
     \includegraphics[width=0.22\textwidth]{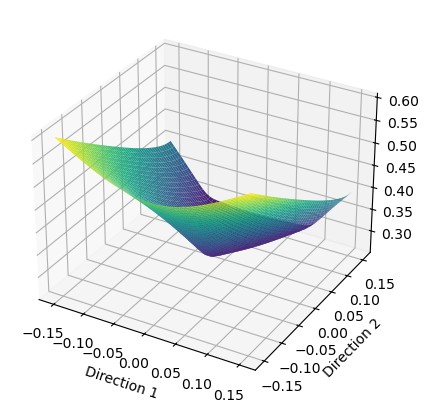} 
	}
    \subfigure[]{
    	\includegraphics[width=0.22\textwidth]{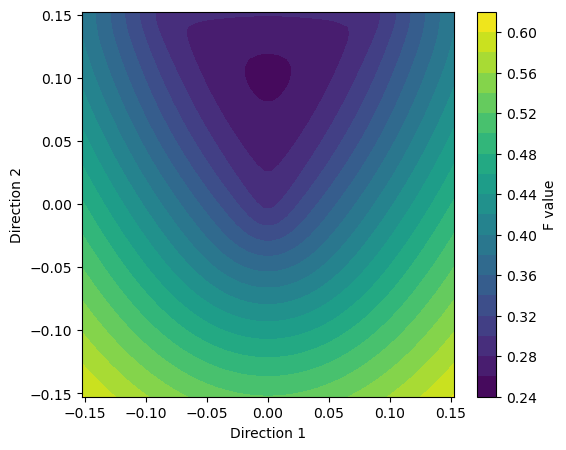} 
    }
	\caption{
    Visualization of the six dimensional function in Eq.\eqref{eqn:6d}:
    Graphs and contours of the objective function in some randomly selected planes spanned by orthonormal directions $v_1, v_2 \in \mathbb{R}^6$. 
    Here the vectors $v_1$ and $v_2$ in the panels (a)-(b), (c)-(d), (e)-(f), (g)-(h), (i)-(j), and (k)-(l) are: \\
    %under randomized directions. We randomly select directions and plot both the surface and contour representations. Panels (a)-(b), (c)-(d), (e)-(f), (g)-(h), (i)-(j), and (k)-(l) show corresponding surface-contour pairs, with two randomized orthonormal directions 
    $(-0.255, -0.705, -0.272, -0.003, -0.572,  0.195), (-0.299, -0.307, -0.098,  0.549,  0.344, -0.622)$,
    $(-0.377, -0.297,  0.24,   0.659, -0.521,  0.084), (0.189,  0.133, -0.031,  0.711,  0.652, -0.123)$,
    $(-0.155,  0.06,   0.666, -0.725,  0.053, -0.026), (-0.335, -0.47,   0.522,  0.51,  -0.192, -0.312)$,
    $(-0.652, -0.041, -0.727,  0.105,  0.104, -0.153), (-0.102,  0.145,  0.286, -0.144,  0.75,  -0.551)$,
    $(0.886,  0.206,  0.297,  0.165,  0.213, -0.105), (-0.01,   0.341, -0.745,  0.382,  0.311, -0.294)$,
    $(1,0,0,0,0,0), (0,1,0,0,0,0)$, 
    respectively.
    }
    \label{fig}
\end{figure}

 In addition, we visualize this six-dimensional function $F(u_{1x},u_{1y},u_{5x},u_{5y},u_{6x},u_{6y})$  via randomly selected two-dimensional planes spanned by  orthonormal directions $v_1, v_2 \in \mathbb{R}^6$. 
We plot the objective function $f(\alpha_1, \alpha_2)=F(\alpha_1 v_1 + \alpha_2 v_2)$ with $\alpha_1, \alpha_2 \in [-\frac{\theta}{b}, \frac{\theta}{b}]$ and its contour lines on each plane; representative examples are shown in Figure~\ref{fig}.

%\FloatBarrier
%%%%%%%%%%%%%%%%%%%%%%%%%
\section{Conclusions}
\label{sec:conclusions}

We study a constrained nonconvex minimization model  proposed in Ref.~\cite{ZHANG} to compute the energy of the low angle grain
boundaries in crystalline materials. We propose a modified ADMM algorithm to solve the constrained
minimization problem and provide  convergence analysis. We notice that
the coefficient submatrix  
 corresponding to each subvariable block is a semiorthogonal
matrix multiplied by the length of Burgers vectors. Based on this
property, we prove that the modified ADMM with an increasing
penalty parameter is convergent without any strong
convexity assumptions on the objective function. Numerical examples show
that the modified ADMM converges much faster and gives more accurate results
compared to the penalty method and the augmented Lagrangian method. 
Moreover, we show that the minimum of this constrained minimization problem is unique when
the objective function satisfies quasi-convexity sufficient conditions over the
feasible region.
This modified ADMM with an increasing
penalty parameter can be applied to more general nonconvex constrained minimization problems, in
which the coefficient matrix in the constraint of each subvariable is of full column rank.

\begin{acknowledgements}
%If you'd like to thank anyone, place your comments here
%and remove the percent signs.
The work of LC  Zhang was supported by National Natural Science Foundation of China 12571464, 12201423, 12426311, Shenzhen Science \& Technology Program RCYX20231211090222026 and JCYJ20241202124209011, Research Team Cultivation Program of Shenzhen University 2023QNT011, and Huayuan Computing Technology Co., Ltd..
The work of Y Xiang was supported by Hong Kong Research Grants Council General Research Fund 16301720 and Collaborative Research Fund C1005-19G, and the Project of Hetao Shenzhen-HKUST Innovation Cooperation Zone HZQB-KCZYB-2020083. 
\end{acknowledgements}

\section*{Data availability}
The datasets generated in study are available upon reasonable request.

\section*{Competing Interests Declaration}
The authors have no relevant financial or non-financial interests to disclose.

%%%%%%%%%%%%%%%%%%%%%%%%%%%%%%%%%%%%%%%%%%%%%%%%%%%%%%%%%%%%%%%%%%%%%%%%

\appendix
\section{Theorems for Section \ref{sec: unique stationary point} }

    \setcounter{theorem}{0}
    \renewcommand{\thetheorem}{\Alph{section}.\arabic{theorem}}

In this section of Appendix, we summarize some available theorems on the conditions for quasi-convexity.

%{\color{red}
%\begin{theorem}[Second order conditions for quasi-convexity \cite{boyd2004convex}]
%\end{theorem}
%}

\begin{theorem}[Sufficient condition of quasi-convexity \cite{CJ}]
	%(Jean-Pierre crouzeix, on second order conditions for quasiconvexity)
	\label{th:sq}
	Let $h$ be twice differentiable function on an open convex set $U \subset \mathbb{R}^n$ such that
	
	(H1) $ \bm{x} \in U, \nabla h(\bm{x})=\bm{0} \Longrightarrow \bm{y}^\T \nabla^2 h(\bm{x}) \bm{y} > 0$  for every $\bm{y} \neq \bm{0},$
	
	(H2) $ \bm{x}\in U, \bm{y} \in \mathbb{R}^n,  \bm{y}^\T \nabla h(\bm{x})=0   \Longrightarrow \bm{y}^\T \nabla^2 h(\bm{x}) \bm{y} \geq 0$.
	
	Then $h(\bm{x})$ is quasi-convex on $U.$
\end{theorem}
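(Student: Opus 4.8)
The plan is to deduce quasi-convexity from the classical criterion recalled in Theorem~\ref{th:sq} (from \cite{CJ}), by checking that the hypotheses (S1) and (S2) imply its hypotheses (H1) and (H2) pointwise on $U$; the uniqueness of the stationary point will then follow from quasi-convexity together with the strict local minimality forced by (S1). I would organize the argument as the two verifications of (H1) and (H2), followed by the uniqueness step.

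First I would dispose of (H1). At any point $\bm{x}\in U$ with $\nabla h(\bm{x})=\bm{0}$, condition (S1) collapses to $p\,\lambda_{\min}>0$, and since $p>0$ this forces $\lambda_{\min}(\nabla^2 h(\bm{x}))>0$. Hence $\nabla^2 h(\bm{x})$ is positive definite, which is exactly the requirement in (H1). The role of the relaxation parameter $p$ is only to turn (S1) into a checkable pointwise inequality that does not presuppose knowledge of where the stationary points lie: at a stationary point the gradient term vanishes and (S1) reduces precisely to positive definiteness, while away from stationary points the nonnegative gradient contribution $\|\nabla h(\bm{x})\|^2$ absorbs a possibly negative $p\,\lambda_{\min}$.

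The substantive step is the verification of (H2), namely that at every $\bm{x}\in U$ with $\nabla h(\bm{x})\neq\bm{0}$ the Hessian $\nabla^2 h(\bm{x})$ is positive semidefinite on the tangent hyperplane $\{\bm{y}:\bm{y}^\T\nabla h(\bm{x})=0\}$. This is a constrained (conditional) semidefiniteness statement, and I would invoke the bordered-Hessian criterion for conditional semidefiniteness: positive semidefiniteness of a symmetric matrix $A$ restricted to the orthogonal complement of a nonzero vector $\bm{a}$ is characterized by sign conditions on the leading principal minors of the bordered matrix $\begin{bmatrix}0 & \bm{a}^\T\\ \bm{a} & A\end{bmatrix}$. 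Applying this with $A=\nabla^2 h(\bm{x})$ and $\bm{a}=\nabla h(\bm{x})$ translates (H2) into exactly the determinant inequalities $\det(B_k)\le 0$, $k=1,\dots,n$, of (S2); this is the equivalence recorded in \cite{CrouzeixF82}. I expect this to be the main obstacle, since it is the only place where a nontrivial external linear-algebra fact is required and where the delicate conventions of the bordered-Hessian minors must be matched to the quasi-convexity criterion (the off-by-one shift between the order of the submatrix $B_k$ and the dimension, and the direction of each inequality, have to be tracked carefully). With (H1) and (H2) established pointwise, Theorem~\ref{th:sq} yields quasi-convexity of $h$ on $U$.

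For uniqueness, suppose $\bm{x}_1,\bm{x}_2\in U$ are two stationary points, so $\nabla h(\bm{x}_1)=\nabla h(\bm{x}_2)=\bm{0}$. By the (H1) conclusion already established, each $\nabla^2 h(\bm{x}_i)$ is positive definite, so by the second-order sufficient condition each $\bm{x}_i$ is a strict local minimizer. Assume $\bm{x}_1\neq\bm{x}_2$ and, without loss of generality, $h(\bm{x}_1)\le h(\bm{x}_2)$. The segment $\bm{z}(\lambda)=\lambda\bm{x}_1+(1-\lambda)\bm{x}_2$ lies in $U$ by convexity, and quasi-convexity gives $h(\bm{z}(\lambda))\le\max\{h(\bm{x}_1),h(\bm{x}_2)\}=h(\bm{x}_2)$ for all $\lambda\in[0,1]$. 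Letting $\lambda\to 0^+$, the points $\bm{z}(\lambda)$ approach $\bm{x}_2$ along the segment while still satisfying $h(\bm{z}(\lambda))\le h(\bm{x}_2)$; this contradicts the strict local minimality of $\bm{x}_2$, which forces $h(\bm{z}(\lambda))>h(\bm{x}_2)$ once $\bm{z}(\lambda)$ is close enough to, and distinct from, $\bm{x}_2$. Hence $\bm{x}_1=\bm{x}_2$, so $h$ has at most one stationary point on $U$, completing the proof.
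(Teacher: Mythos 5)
There is a genuine gap, and it is structural: the statement you were asked to prove is Theorem~\ref{th:sq} itself, i.e.\ the implication that (H1) and (H2) force $h$ to be quasi-convex on $U$ --- and your argument never establishes this implication, it \emph{invokes} it (``Theorem~\ref{th:sq} yields quasi-convexity of $h$ on $U$''), which is circular as a proof of Theorem~\ref{th:sq}. What you have actually written is a proof of the paper's Theorem~\ref{th:sufficient}: you check that (S1) implies (H1) at stationary points, that (S2) matches (H2) via the bordered-Hessian equivalence of \cite{CrouzeixF82} (the paper's Theorem~\ref{th:equi}), and you then deduce uniqueness of the stationary point. But the conditions (S1), (S2), the relaxation parameter $p$, and the uniqueness conclusion do not occur anywhere in the statement of Theorem~\ref{th:sq}, whose hypotheses are (H1)--(H2) directly and whose sole conclusion is quasi-convexity. (For what it is worth, your uniqueness step is actually cleaner than the paper's proof of Theorem~\ref{th:sufficient} --- you contradict strict local minimality of $\bm{x}_2$ by letting $\lambda\to 0^+$ along the segment, rather than arguing that $h$ must be constant on the segment --- but that theorem is not the one at issue, and in the paper Theorem~\ref{th:sq} is quoted from \cite{CJ} without proof precisely because its proof is nontrivial.)

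A genuine proof of Theorem~\ref{th:sq} would have to show directly that every sublevel set is convex. The natural skeleton: suppose quasi-convexity fails, so there exist $\bm{x},\bm{y}\in U$ and an interior point of the segment at which $h$ exceeds $\max\{h(\bm{x}),h(\bm{y})\}$; take $\bm{z}$ maximizing $h$ on the segment, so with $\bm{d}=\bm{y}-\bm{x}$ one has $\bm{d}^\T\nabla h(\bm{z})=0$. If $\nabla h(\bm{z})=\bm{0}$, then (H1) makes $\bm{z}$ a strict local minimizer, contradicting its being an interior maximizer with value strictly above the endpoints. If $\nabla h(\bm{z})\neq\bm{0}$, then (H2) gives $\bm{d}^\T\nabla^2 h(\bm{z})\bm{d}\geq 0$, but this alone yields no contradiction when the restricted second derivative vanishes at the maximizer; handling that degenerate case (via a perturbation or continuity argument, or the one-dimensional characterization of quasi-convex functions, as in \cite{CJ,CrouzeixF82}) is exactly where the substance of the theorem lies, and none of it appears in your write-up.
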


Note that the condition (H2) is not easy to check. The following theorem gives  equivalent conditions.

\begin{theorem}[Equivalent second order conditions \cite{CrouzeixF82}]
	%(Jean-Pierre crouzeix, criteria for quasiconvexity and pesudo-convexity: relationships and comparisons, Sec 4.1)
	\label{th:equi}
	Let $ A$ be a real symmetric matrix of order $n,$
	and $\bm{a} \in \mathbb{R}^n, \bm{a} \neq \bm{0},$
	the real symmetric matrix of order $(n+1)$ is
	$$A_a = \left[\begin{array}{cc}
		0 &  \bm{a}^\T  \\
		\bm{a}  & A
	\end{array}\right].
	$$
	The following conditions are equivalent:
	
	(C1) $\bm{a}^\T y=0 $ implies
	$\bm{y}^\T A \bm{y} \geq 0.$
	
	(C2) Either $ A$ is positive semidefinite,
	or $A$ has one simple negative eigenvalue and there exists a vector $\bm{d} \in \mathbb{R}^n$ such that $A\bm{d}=\bm{a} $ and $\bm{a}^\T \bm{d} \leq 0.$
	
	(C3) The bordered hessian $A_a$ has one simple negative eigenvalue.
	
	(C4) For all nonempty subset $\mathcal{L} \subset \left\{1,2,\dots,n \right\},$
	$$\det D_{\mathcal{L}} =\det  \left[\begin{array}{cc}
		0 &  \bm{a}_{\mathcal{L}}  \\
		\bm{a}_{\mathcal{L}}  &  A_{\mathcal{L}}
	\end{array}\right]  \leq 0,$$
	where $A_{\mathcal{L}}$ is obtained from $A$ by deleting rows and columns whose indices are not in $\mathcal{L}, $ and $\bm{a}_{\mathcal{L}}$ is obtained analogously from $\bm{a}.$
\end{theorem}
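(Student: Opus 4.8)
The plan is to route all four equivalences through a single inertia identity that compares the bordered matrix $A_a$ with the restriction of the quadratic form $\bm{y}^\T A\bm{y}$ to the hyperplane $H:=\{\bm{y}\in\mathbb{R}^n:\bm{a}^\T\bm{y}=0\}$. Writing $\operatorname{In}(\cdot)=(n_+,n_-,n_0)$ for the inertia (the numbers of positive, negative, and zero eigenvalues), I claim
\begin{equation}\label{eq:inertia-plan}
\operatorname{In}(A_a)=(1,1,0)+\operatorname{In}\!\left(A|_{H}\right).
\end{equation}
To prove \eqref{eq:inertia-plan} I would first apply the block congruence $\operatorname{diag}(1,Q)$ with $Q$ orthogonal and $Q^\T\bm{a}=\|\bm{a}\|\bm{e}_1$; this preserves the inertia of $A_a$, preserves $H$, and preserves $\operatorname{In}(A|_H)$, so I may assume $\bm{a}=\alpha\bm{e}_1$ with $\alpha=\|\bm{a}\|>0$. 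The $2\times2$ block of $A_a$ in the variables $(t,y_1)$ is $\bigl(\begin{smallmatrix}0&\alpha\\\alpha&A_{11}\end{smallmatrix}\bigr)$, which has determinant $-\alpha^2<0$ and hence inertia $(1,1,0)$; in particular it is nonsingular. Taking the Schur complement of $A_a$ with respect to this block and invoking Haynsworth additivity then yields \eqref{eq:inertia-plan}, the point being that the $(2,2)$ entry of the inverse of that border block vanishes, so the coupling drops out and the Schur complement equals exactly $A|_H$.

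Granting \eqref{eq:inertia-plan}, the equivalence of (C1) and (C3) is immediate: (C1) says $A|_H\succeq0$, i.e.\ $n_-(A|_H)=0$, which by \eqref{eq:inertia-plan} is the same as $n_-(A_a)=1$, i.e.\ that $A_a$ has a single (necessarily simple) negative eigenvalue, which is (C3). For (C1)$\Leftrightarrow$(C2) I would combine \eqref{eq:inertia-plan} with Cauchy interlacing for a codimension-one restriction, namely $n_-(A)-1\le n_-(A|_H)\le n_-(A)$. Under (C1) this forces $n_-(A)\in\{0,1\}$: the case $n_-(A)=0$ is exactly ``$A$ positive semidefinite,'' and the case $n_-(A)=1$ is the second alternative of (C2). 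It then remains to show that, when $A$ has one negative eigenvalue, $A|_H\succeq0$ is equivalent to the solvability of $A\bm{d}=\bm{a}$ with $\bm{a}^\T\bm{d}\le0$; here I would diagonalize $A$, eliminate the coordinate along the negative eigendirection using the constraint, and recognize the resulting rank-one-perturbed form, whose semidefiniteness reduces to $\bm{a}^\T A^{+}\bm{a}\le0$ with $\bm{a}\in\operatorname{range}(A)$, where $\bm{a}^\T A^{+}\bm{a}=\bm{a}^\T\bm{d}$ for any solution $\bm{d}$.

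For (C3)$\Leftrightarrow$(C4) I would read the determinants $\det D_{\mathcal{L}}$ as the bordered principal minors of $A_a$ associated with the index sets $\mathcal{L}$, and use the determinantal (Jacobi-type) characterization of semidefiniteness: the constrained form $A|_H$ is positive semidefinite precisely when every such bordered principal minor has sign $(-1)^{1}$, i.e.\ is $\le0$. By \eqref{eq:inertia-plan} this is once more the statement $n_-(A_a)=1$, closing the cycle. I expect the main obstacle to lie in the two boundary cases rather than in \eqref{eq:inertia-plan} itself: first, the rank-deficient instance of (C1)$\Leftrightarrow$(C2), where one must track the range condition $\bm{a}\in\operatorname{range}(A)$ and pass to the pseudoinverse so that $\bm{a}^\T\bm{d}$ is well defined and sign-correct; and second, the (C3)$\Leftrightarrow$(C4) step, where it is essential to quantify over all nonempty subsets $\mathcal{L}$ (not merely the leading ones) to capture the semidefinite—as opposed to definite—case, and to argue that vanishing minors do not corrupt the inertia count.
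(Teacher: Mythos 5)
First, a point of comparison: the paper itself gives \emph{no} proof of Theorem~\ref{th:equi} --- it is stated in Appendix~A as a quoted background result from \cite{CrouzeixF82} --- so your proposal has to stand entirely on its own merits. Much of it does. The central inertia identity is correct: after the orthogonal reduction to $\bm{a}=\alpha\bm{e}_1$, the $2\times 2$ border block $\bigl[\begin{smallmatrix}0&\alpha\\ \alpha&A_{11}\end{smallmatrix}\bigr]$ has inertia $(1,1,0)$, the $(2,2)$ entry of its inverse vanishes, and Haynsworth additivity gives $n_\pm(A_a)$ in terms of $n_\pm(A|_H)$ exactly as you claim; (C1)$\Leftrightarrow$(C3) then follows at once, and your interlacing bound correctly forces $n_-(A)\in\{0,1\}$ under (C1). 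In the (C1)$\Leftrightarrow$(C2) step you flag, but do not close, the range condition; for the record it is closable in two lines: if $A\bm{z}=\bm{0}$ with $\bm{a}^\T\bm{z}\neq 0$ and $\bm{v}$ is the negative eigendirection, choose $t$ with $\bm{a}^\T(\bm{v}+t\bm{z})=0$, so that $(\bm{v}+t\bm{z})^\T A(\bm{v}+t\bm{z})=\bm{v}^\T A\bm{v}<0$ contradicts (C1); hence $\ker A\subseteq \bm{a}^{\perp}$, i.e.\ $\bm{a}\in\operatorname{range}(A)$. Once $A\bm{d}=\bm{a}$ is solvable, the congruence of $A_a$ by $\bigl[\begin{smallmatrix}1&\bm{0}^\T\\ -\bm{d}&I\end{smallmatrix}\bigr]$ turns $A_a$ into $\operatorname{diag}(-\bm{a}^\T\bm{d},\,A)$, which settles \emph{both} directions of (C1)$\Leftrightarrow$(C2) cleanly and also shows $\bm{a}^\T\bm{d}$ is independent of the choice of solution $\bm{d}$.

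The genuine gap is (C3)$\Leftrightarrow$(C4). You dispose of it by invoking ``the determinantal (Jacobi-type) characterization of semidefiniteness: $A|_H$ is positive semidefinite precisely when every bordered principal minor is $\leq 0$.'' But that characterization \emph{is} the equivalence (C1)$\Leftrightarrow$(C4); it is the substantive content of the Crouzeix--Ferland theorem you are proving, not an off-the-shelf fact you may cite to ``close the cycle.'' The classical Jacobi/Sylvester-type results give the \emph{definite} case via \emph{leading} bordered minors; upgrading to the semidefinite case quantified over all index subsets $\mathcal{L}$ is exactly the hard step. Your framework does yield the easy half: (C1) is inherited by every pair $(A_{\mathcal{L}},\bm{a}_{\mathcal{L}})$ (extend a constrained vector by zeros outside $\mathcal{L}$), so either $\bm{a}_{\mathcal{L}}=\bm{0}$ and $\det D_{\mathcal{L}}=0$, or your inertia identity applied to $D_{\mathcal{L}}$ gives $n_-(D_{\mathcal{L}})=1$ and hence $\det D_{\mathcal{L}}\leq 0$. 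The converse --- that $\det D_{\mathcal{L}}\leq 0$ for \emph{all} nonempty $\mathcal{L}$ forces $n_-(A_a)=1$ --- follows from nothing you have set up; it needs its own induction on $n$ or a perturbation argument that copes with singular principal submatrices (this is where the cited paper does real work). As written, the implication chain (C1)$\Rightarrow$(C3)$\Rightarrow$(C4)$\Rightarrow$(C1) is circular at its last edge.
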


%\begin{theorem}[Equivalent conditions \cite{CrouzeixF82}]
%\label{th:equi2}
%	Let $ A=\nabla^2 h(\bm{x}) =(h_{ij})  $ be the hessian matrix of order $n,$ when $ \bm{a}= \nabla h(\bm{x})=(h_1, \dots, h_n) \neq \bm{0}, $ the bordered hessian matrix of order $n+1$ is
%	$$B(\bm{x}) = \left[\begin{array}{cc}
%		0 &  \nabla h(\bm{x})^\T  \\
%		\nabla h(\bm{x})  &  \nabla ^2 h(\bm{x})
%	\end{array}\right].
%	$$
%	The following conditions are equivalent:
%	
%	(C1) $\bm{y}^\T \nabla h(\bm{x})= 0 $ implies
%	$\bm{y}^\T \nabla^2 h(x) \bm{y} \geq 0.$
%	
%	(C4) $\det(B_k) \leq 0, $ where
%	$B_k$ is the $(k+1)$-th order leading principal submatrix of $B$:
%	$B_1 = \left[\begin{array}{cc}
%		0 &  h_1  \\
%		h_1  &  h_{11}
%	\end{array}\right],
%	B_2 = \left[\begin{array}{ccc}
%		0 &  h_1 & h_2  \\
%		h1 & h_{11} & h_{12} \\
%		h_2 & h_{21} & h_{22}
%	\end{array}\right],
%	\dots,
%	B_n = B, \ k=1,2,\dots,n.$
%\end{theorem}

\section{Theorems for Section \ref{sec: numerical} }\label{sec:convergences}

\setcounter{theorem}{0}

In this section of Appendix, we present the available
convergence theorems of the penalty method and the ALM, by which convergence can be shown when the two method applied to the constrained minimization problem  in Eq.~\eqref{eq:cp}.

\begin{theorem}[\cite{Edwin}]\label{th:p}
	Suppose that the objective function $f(x)$ is continuous and $\rho_k \rightarrow \infty$ and $k\rightarrow \infty. $ Then the limit of any convergent subsequence of the sequence $ \{x^{(k)} \}$ is a solution to the constrained optimization problem:
    \begin{equation}
        \begin{aligned}
            \min   &   f(x)    \\
           \text{s.t. } & h(x)=0,
        \end{aligned}
    \end{equation}
 where	$ x^{(k)}=\arg\min_{x} \{f(x)+ \rho_k \|h(x) \|^2\}.$
\end{theorem}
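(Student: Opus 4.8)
The plan is to follow the classical penalty-method convergence argument, which rests entirely on comparing the penalized objective at the iterate $x^{(k)}$ with its value at a feasible optimizer. Write $P(x) := \|h(x)\|^2 \geq 0$ and $q_k(x) := f(x) + \rho_k P(x)$, so that $x^{(k)} = \arg\min_x q_k(x)$. I assume, as is implicit in the statement, that the constrained problem attains its minimum at some feasible $x^\star$ (so $h(x^\star) = 0$, hence $P(x^\star) = 0$), and set $f^\star := f(x^\star)$. The single inequality that drives the whole proof is obtained by noting that $x^{(k)}$ minimizes $q_k$ while $x^\star$ is admissible in that minimization:
\[
f(x^{(k)}) + \rho_k P(x^{(k)}) = q_k(x^{(k)}) \leq q_k(x^\star) = f(x^\star) + \rho_k \cdot 0 = f^\star.
\]
Since $\rho_k P(x^{(k)}) \geq 0$, this at once gives $f(x^{(k)}) \leq f^\star$ for every $k$.

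Next I would pass to a convergent subsequence $x^{(k_i)} \to \bar x$ and establish feasibility of the limit. Rearranging the displayed inequality gives $P(x^{(k_i)}) \leq \bigl(f^\star - f(x^{(k_i)})\bigr)/\rho_{k_i}$. By continuity of $f$ the numerator converges to the finite number $f^\star - f(\bar x)$, whereas $\rho_{k_i} \to \infty$; combined with $P \geq 0$ this squeezes $P(x^{(k_i)}) \to 0$. Using continuity of $h$ (and hence of $P$), I conclude $P(\bar x) = \lim_i P(x^{(k_i)}) = 0$, i.e. $h(\bar x) = 0$, so $\bar x$ is feasible.

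Finally I would prove optimality of $\bar x$. Letting $i \to \infty$ in $f(x^{(k_i)}) \leq f^\star$ and using continuity of $f$ yields $f(\bar x) \leq f^\star$. Conversely, $\bar x$ is feasible and $f^\star$ is by definition the minimum of $f$ over the feasible set, so $f(\bar x) \geq f^\star$. Hence $f(\bar x) = f^\star$, and being feasible, $\bar x$ solves the constrained problem.

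The only genuine obstacle is bookkeeping of the hypotheses that make the argument well posed, rather than the estimates themselves. Specifically, the constrained problem must possess a feasible point (indeed an attained minimum $f^\star$) for the comparison $q_k(x^{(k)}) \leq f^\star$ to carry information, and $h$ must be continuous so that $P(x^{(k_i)}) \to 0$ can be upgraded to feasibility of the limit $\bar x$. It is worth emphasizing that monotonicity of $\{\rho_k\}$ plays no role here: only $\rho_k \to \infty$ is used, since every estimate descends from the single per-iteration inequality $q_k(x^{(k)}) \leq f^\star$ rather than from any monotone behavior of the penalized optimal values.
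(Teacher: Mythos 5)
Your proof is correct. Note that the paper itself does not prove this statement: Theorem~\ref{th:p} is quoted in Appendix~B directly from the cited reference \cite{Edwin} as a known convergence result for the penalty method, so there is no in-paper argument to compare against. Your argument is the classical one underlying that reference, and in fact slightly leaner than the textbook version: the standard proof first establishes a monotonicity lemma (that $q_k(x^{(k)})$ is nondecreasing, $P(x^{(k)})$ nonincreasing, and $f(x^{(k)})$ nondecreasing in $k$, which does use $\rho_{k+1}>\rho_k$), whereas you correctly observe that the single comparison $q_k(x^{(k)})\leq q_k(x^\star)=f^\star$ already yields feasibility of the limit by the squeeze $0\leq P(x^{(k_i)})\leq (f^\star-f(x^{(k_i)}))/\rho_{k_i}\to 0$ and optimality by continuity, so only $\rho_k\to\infty$ is needed. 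Your bookkeeping of the implicit hypotheses (existence of a feasible minimizer $x^\star$ and continuity of $h$, both automatic in the paper's application where $h$ is affine and the feasible set is nonempty) is also appropriate.
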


\begin{theorem}[\cite{Bertsekas}]\label{th:a}
Consider the constrained optimization problem
 \begin{equation}
        \begin{aligned}
            \min   &   f(x)    \\
           \text{s.t. } & h(x)=0.
        \end{aligned}
    \end{equation}
Assume that $f(x)$ and $h(x)$ are continuous functions, that $X $ is a closed set, and that the constraint set $\{ x \in X | h(x)=0 \}$ is nonempty. For $k =0,1,\dots$, let $x^{(k)} $ be a global minimum of the problem
       \begin{equation}
           \begin{aligned}
                \min  & L_{\rho_k}(x,\lambda^{(k)}) \\
                \text{s.t. } & x \in X,
           \end{aligned}
       \end{equation}
where $ L_{\rho_k}(x,\lambda^{(k)}) := f(x) + \lambda^{(k)}h(x) + \frac{\rho^{(k)}}{2} \|h(x)\|^2 $ is the augmented Lagrangian function,
and $ \{\lambda^{(k)} \}$ is bounded, $ 0< \rho_k < \rho_{k+1} $ for all $ k$, and $\rho_k \rightarrow \infty$. Then every limit point of the sequence $ \{ x^{(k)} \}$ is a global minimum of the original optimization problem.
 \end{theorem}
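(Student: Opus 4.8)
The plan is to follow the classical penalty-boundedness argument for the augmented Lagrangian method. Let $\bar{x}$ be any limit point of $\{x^{(k)}\}$, say $x^{(k_j)} \to \bar{x}$ along a subsequence, and write $f^\star := \inf\{f(x) : x \in X,\ h(x) = \bm{0}\}$ for the optimal value of the original problem. Because $X$ is closed and each $x^{(k)} \in X$, we immediately have $\bar{x} \in X$. The two remaining things to establish are feasibility, $h(\bar{x}) = \bm{0}$, and optimality, $f(\bar{x}) = f^\star$.

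First I would extract the central inequality from the optimality of $x^{(k)}$. For any feasible point $\tilde{x}$ (that is, $\tilde{x} \in X$ with $h(\tilde{x}) = \bm{0}$), the fact that $x^{(k)}$ globally minimizes $L_{\rho_k}(\cdot, \lambda^{(k)})$ over $X$ gives
\begin{equation*}
f(x^{(k)}) + \lambda^{(k)} h(x^{(k)}) + \frac{\rho_k}{2}\|h(x^{(k)})\|^2 \leq L_{\rho_k}(\tilde{x}, \lambda^{(k)}) = f(\tilde{x}),
\end{equation*}
since the linear and penalty terms vanish at $\tilde{x}$. Taking the infimum over all feasible $\tilde{x}$ yields
\begin{equation*}
f(x^{(k)}) + \lambda^{(k)} h(x^{(k)}) + \frac{\rho_k}{2}\|h(x^{(k)})\|^2 \leq f^\star \qquad \text{for every } k,
\end{equation*}
which in particular shows that $f^\star$ is finite, since the left-hand side is a finite real number for each $k$.

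Next I would prove feasibility of $\bar{x}$. Rearranging to isolate the penalty term gives
\begin{equation*}
\frac{\rho_k}{2}\|h(x^{(k)})\|^2 \leq f^\star - f(x^{(k)}) - \lambda^{(k)} h(x^{(k)}),
\end{equation*}
and I would argue that the right-hand side stays bounded along $\{k_j\}$: by continuity $f(x^{(k_j)}) \to f(\bar{x})$ and $h(x^{(k_j)}) \to h(\bar{x})$, while $\{\lambda^{(k)}\}$ is bounded by hypothesis, so the cross term $\lambda^{(k_j)} h(x^{(k_j)})$ is bounded. Since $\rho_{k_j} \to \infty$, the product $\frac{\rho_{k_j}}{2}\|h(x^{(k_j)})\|^2$ can remain bounded only if $\|h(x^{(k_j)})\| \to 0$, and continuity then forces $h(\bar{x}) = \bm{0}$. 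I expect this to be the main obstacle: it is exactly here that both $\rho_k \to \infty$ and the boundedness of $\{\lambda^{(k)}\}$ are indispensable, since bounded multipliers are what prevent the linear cross term from overwhelming the growing penalty and wrecking the estimate.

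Finally I would conclude optimality. Dropping the nonnegative penalty term from the central inequality gives $f(x^{(k)}) + \lambda^{(k)} h(x^{(k)}) \leq f^\star$; passing to the limit along $\{k_j\}$ and using $h(x^{(k_j)}) \to \bm{0}$ together with the boundedness of $\{\lambda^{(k)}\}$ (so that the cross term vanishes) yields $f(\bar{x}) \leq f^\star$. Conversely, feasibility of $\bar{x}$ gives $f(\bar{x}) \geq f^\star$ by the very definition of $f^\star$. Combining the two inequalities gives $f(\bar{x}) = f^\star$, so $\bar{x}$ is a global minimum of the original constrained problem, which completes the proof.
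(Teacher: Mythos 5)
Your proof is correct and is essentially the classical argument from the cited reference (Bertsekas): the paper itself states this theorem without proof in Appendix B, and your three steps --- the global-minimality inequality $L_{\rho_k}(x^{(k)},\lambda^{(k)}) \leq f(\tilde{x}) $ tested against feasible points, feasibility of the limit via bounded multipliers plus $\rho_k \to \infty$, and optimality by dropping the nonnegative penalty term --- reproduce the standard textbook proof exactly. The one place to be careful, which you handle correctly, is that $\|h(x^{(k_j)})\|$ is known to converge (to $\|h(\bar{x})\|$, by continuity along the convergent subsequence), so boundedness of $\frac{\rho_{k_j}}{2}\|h(x^{(k_j)})\|^2$ genuinely forces that limit to be zero rather than merely forcing a zero liminf.
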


%\begin{theorem}[\cite{Bertsekas}]\label{th:a}
%	Assume that $f(x)$ and $h(x)$ are continuous functions, that $X $ is a closed set, and that the constraint set $\{ x \in X | h(x)=0 \}$ is nonempty. For $k =0,1,\dots,$ let $x^{(k)} $ be a global minimum of the problem
%       \begin{equation}
%           \begin{aligned}
%                \min  & L_{\rho_k}(x,\lambda^{(k)}) \\
%                \text{s.t. } & x \in X,
%           \end{aligned}
%       \end{equation}
%	where $ \{\lambda^{(k)} \}$ is bounded, $ 0< \rho_k < \rho_{k+1} $ for all $ k,$ and $\rho_k \rightarrow \infty.$ Then every limit point of the sequence $ \{ x^{(k)} \}$ is a global minimum of the original problem.
%
%{  Minimization problem; Definition of the Lagrange function.}
%\end{theorem}

% Authors must disclose all relationships or interests that
% could have direct or potential influence or impart bias on
% the work:
%
% \section*{Conflict of interest}
%
% The authors declare that they have no conflict of interest.

% BibTeX users please use one of
%\bibliographystyle{spbasic}      % basic style, author-year citations
\bibliographystyle{spmpsci}      % mathematics and physical sciences
\bibliography{references}   % name your BibTeX data base

%% Non-BibTeX users please use
%\begin{thebibliography}{}
%%
%% and use \bibitem to create references. Consult the Instructions
%% for authors for reference list style.
%%
%\bibitem{RefJ}
%% Format for Journal Reference
%Author, Article title, Journal, Volume, page numbers (year)
%% Format for books
%\bibitem{RefB}
%Author, Book title, page numbers. Publisher, place (year)
%% etc
%\end{thebibliography}

\end{document}